\newcommand{\bbold}{\mathbb}
\def\R { {\bbold R} }
\def\N { {\bbold N} }
\def\T { {\bbold T} }
\def \Ko{\operatorname{Ko}}
\def \ex{\operatorname{e}}
\renewcommand\epsilon{\varepsilon}
\def \d{\operatorname{d}}
\def \<{\langle}
\def \>{\rangle}
\def \((  {(\!(}
\def \)) {)\!)}
\DeclareMathSymbol{\precequ}{\mathrel}{symbols}{"16}
\DeclareMathSymbol{\succequ}{\mathrel}{symbols}{"17}
\newtheorem{theorem}{Theorem}[section]
\newtheorem{lemma}[theorem]{Lemma}
\newtheorem{prop}[theorem]{Proposition}
\newtheorem{cor}[theorem]{Corollary}
\theoremstyle{definition}
\theoremstyle{remark}
\def \Zero{\operatorname{Z}}
\let\oldi\i
\let\oldj\j
\renewcommand\i{\relax\ifmmode{\boldsymbol{i}}\else\oldi\fi}
\renewcommand\j{\relax\ifmmode{\boldsymbol{j}}\else\oldj\fi}
\renewcommand\leq{\leqslant}
\renewcommand\geq{\geqslant}
\renewcommand\preceq{\preccurlyeq}
\renewcommand\le{\leq}
\renewcommand\ge{\geq}
\renewcommand\frak{\mathfrak}
\DeclareMathAlphabet{\mathbf}{OML}{cmm}{b}{it}
\DeclareFontFamily{U}{fsy}{}
\DeclareFontShape{U}{fsy}{m}{n}{<->s*[.9]psyr}{}
\DeclareSymbolFont{der@m}{U}{fsy}{m}{n}
\DeclareMathSymbol{\der}{\mathord}{der@m}{182}
\DeclareSymbolFont{der@m}{U}{fsy}{m}{n}
\DeclareMathSymbol{\derdelta}{\mathord}{der@m}{100}
\DeclareSymbolFont{imag@m}{OT1}{cmr}{m}{ui}
\DeclareMathSymbol{\imag}{\mathord}{imag@m}{105}
\DeclareFontFamily{OMS}{smallo}{}
\DeclareFontShape{OMS}{smallo}{m}{n}{<->s*[.65]cmsy10}{}
\DeclareSymbolFont{smallo@m}{OMS}{smallo}{m}{n}
\DeclareMathSymbol{\smallo}{\mathord}{smallo@m}{79}
\DeclareFontFamily{OMS}{largerdot}{}
\DeclareFontShape{OMS}{largerdot}{m}{n}{<->s*[.8]cmsy10}{}
\DeclareSymbolFont{largerdot@m}{OMS}{largerdot}{m}{n}
\DeclareMathSymbol{\largerdot}{\mathord}{largerdot@m}{15}
\DeclareMathSymbol{\llambda}{\mathord}{der@m}{108}
\DeclareMathSymbol{\rrho}{\mathord}{der@m}{114}
\def \Upl{\Uplambda}
\def \upo{\upomega}
\def \Upo{\Upomega}
\newcommand{\equationqed}[1]{\[\pushQED{\qed}#1 \qedhere\popQED\]\let\qed\relax}
\newcommand{\alignqed}[1]{\begin{align*}\pushQED{\qed} #1 \qedhere\popQED\end{align*}\let\qed\relax}
\newcommand{\dminus}{\mathbin{\text{\@dminus}}}
\newcommand{\@dminus}{%
  \ooalign{\hidewidth\raise1ex\hbox{\bf.}\hidewidth\cr$\m@th-$\cr}%
}
\begin{document}

\title{Dimension in the Realm of Transseries}

\author[Aschenbrenner]{Matthias Aschenbrenner}
\address{Department of Mathematics\\
University of California, Los Angeles\\
Los Angeles, CA 90095\\
U.S.A.}
\email{matthias@math.ucla.edu}

\author[van den Dries]{Lou van den Dries}
\address{Department of Mathematics\\
University of Illinois at Urbana-Cham\-paign\\
Urbana, IL 61801\\
U.S.A.}
\email{vddries@math.uiuc.edu}

\author[van der Hoeven]{Joris van der Hoeven}
\address{\'Ecole Polytechnique\\
91128 Palaiseau Cedex\\
France}
\email{vdhoeven@lix.polytechnique.fr}

\begin{abstract} Let $\T$ be the differential field of transseries. We 
establish some basic properties of the {\em dimension\/} of a definable 
subset of $\T^n$, also in relation to its {\em codimension\/} in the ambient space
$\T^n$. The case of dimension $0$ is of special interest, and
can be characterized both in topological terms (discreteness)
and in terms of the Herwig-Hrushovski-Macpherson notion of 
co-analyzability.  
\end{abstract}

\date{December 2016}

\subjclass[2010]{Primary: 12H05, 12J25; secondary: 03C64, 12J15}

\maketitle
  
\section*{Introduction} 

\noindent
The field of Laurent series with real coefficients comes with a natural derivation but is too small to be closed under integration and exponentiation. These defects are cured by passing to a certain canonical extension, the ordered differential field~$\T$ of transseries. Transseries are formal
series in an indeterminate $x>\R$, such as
\begin{align*}
   &   - 3 \ex^{\ex^x} + \ex^{\textstyle\frac{\ex^x}{\log x} +
  \frac{\ex^x}{\log^2 x} + \frac{\ex^x}{\log^3 x} + \cdots} - x^{11} + 7\\
  &   \hspace{2em} + \frac{\pi}{x} + \frac{1}{x \log x} + \frac{1}{x \log^2
  x} + \frac{1}{x \log^3 x} + \cdots \nonumber\\
  &   \hspace{2em} + \frac{2}{x^2} + \frac{6}{x^3} + \frac{24}{x^4} +
  \frac{120}{x^5} + \frac{720}{x^6} + \cdots \nonumber\\
  &   \hspace{2em} + \ex^{- x} + 2 \ex^{- x^2} + 3 \ex^{- x^3} + 4 \ex^{-
  x^4} + \cdots, \nonumber
\end{align*}
where $\log^2 x := (\log x)^2$, etc.~Transseries, that is, 
elements of $\T$,  
are also the {\em lo\-ga\-rith\-mic-exponential series\/}  ($\operatorname{LE}$-series, for short)
from \cite{DMM}; we refer to that paper, or to Appendix~A of our book~\cite{ADH}, for a detailed construction of $\T$.

What we need for now is that $\T$ is a real closed field extension
of the field $\R$ of real numbers and that $\T$ comes equipped with 
a distinguished element $x>\R$, an exponential operation $\exp\colon \T \to \T$ and a distinguished derivation $\der\colon \T\to \T$. The exponentiation here is an isomorphism of the ordered additive group of $\T$ onto the ordered multiplicative group
$\T^{>}$ of positive elements of $\T$. The derivation~$\der$
comes from
differentiating a transseries termwise with respect to $x$, and
we set $f':= \der(f)$, $f'':= \der^2(f)$, and so on, for $f\in \T$;
in particular, $x'=1$, and $\der$
is compatible with exponentiation: $\exp(f)'=f'\exp(f)$ for $f\in \T$. Moreover, the constant field of $\T$ is~$\R$, that is,
$\{f\in \T:\ f'=0\}=\R$; see again \cite{ADH} for details.

In Section~\ref{dad} we define for any differential field $K$ (of characteristic $0$ in this paper) and any set $S\subseteq K^n$ its (differential-algebraic)
dimension 
$$\dim S\in \{-\infty,0,1,\dots,n\} \quad (\text{with $\dim S=-\infty$ iff $S=\emptyset$}).$$ 
Some dimension properties hold in this generality, but for more substantial results we assume that $K=\T$ and $S$ is {\em definable\/} in $\T$, in which case we have: 
$$\dim S = n\ \Longleftrightarrow\ S \text{ has nonempty interior in }\T^n.$$ 
Here $\T$ is equipped with its order topology, and each $\T^n$
with the corresponding product topology. This equivalence is shown in
Section~\ref{T}, where we also prove:

\begin{theorem}\label{ddim} If $S\subseteq \T^m$ and $f\colon S \to \T^n$ are definable, then $\dim S\ge \dim f(S)$, for every $i\in \{0,\dots,m\}$ the set $B(i):=\big\{y\in \T^n:\ \dim f^{-1}(y) =i\big\}$ is definable, and $\dim f^{-1}\big(B(i)\big)=i+\dim B(i)$.
\end{theorem}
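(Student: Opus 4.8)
The plan is to reduce the three assertions to the behaviour of differential transcendence degree at generic points. Fix a finite set $C\subseteq\T$ of parameters defining both $S$ and $f$, and pass to a $|\T|^+$-saturated elementary extension $\T^*$. By the basic properties of $\dim$ from Section~\ref{dad} --- its invariance under elementary extension and its computation at generic points --- I may work in $\T^*$ and use that for every $X\subseteq\T^m$ definable over a subfield $k$ of $\T^*$ with $C\subseteq k$ and $|k|\le|\T|$ one has $\dim X=\max\{\dim(a\mid k):a\in X(\T^*)\}$, where $\dim(a\mid k)$ denotes the differential transcendence degree of $k\langle a\rangle$ over $k$. The only algebraic input beyond this is the additivity of differential transcendence degree in a tower, $\dim(a,b\mid k)=\dim(a\mid k\langle b\rangle)+\dim(b\mid k)$.

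Part~(a) is then immediate. Pick $b\in f(S)(\T^*)$ with $\dim(b\mid C)=\dim f(S)$; since $b$ lies in the image there is $a\in S(\T^*)$ with $f(a)=b$, and from $C\langle b\rangle\subseteq C\langle a\rangle$ we get $\dim f(S)=\dim(b\mid C)\le\dim(a\mid C)\le\dim S$.

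The definability in~(b) is the one point at which input specific to $\T$ is required, and I expect it to be the main obstacle. I would first reduce it to the definability of each $B(\geq i):=\{y:\dim f^{-1}(y)\geq i\}$, since $B(i)=B(\geq i)\setminus B(\geq i+1)$. To handle $B(\geq i)$ I would prove the auxiliary characterization, valid for every definable $X\subseteq\T^m$, that $\dim X\geq i$ if and only if, for some choice of $i$ among the $m$ coordinates, the corresponding projection $\pi(X)\subseteq\T^i$ has nonempty interior. The backward direction is part~(a) applied to $\pi$; for the forward direction I would take $a\in X(\T^*)$ with $\dim(a\mid C)\geq i$, choose $i$ coordinates of $a$ that are differentially independent over $C$, and note that $\pi$ then sends $a$ to a point of differential transcendence degree $i$ over $C$, so that $\dim\pi(X)=i$ (its ambient dimension) and hence $\pi(X)$ has nonempty interior by the interior characterization proved in Section~\ref{T}. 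Granting this, $y\in B(\geq i)$ is expressed by the first-order condition that, for one of the finitely many such $\pi$, the set $\pi(f^{-1}(y))$ --- itself uniformly definable in $y$ --- contains an open box; thus $B(\geq i)$, and hence $B(i)$, is definable. The substance of~(b) is therefore carried by the interior characterization of dimension and the first-order expressibility of ``containing an open box'', i.e.\ by the tameness of $\T$ established earlier, rather than by any computation internal to this proof.

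Finally, for the additivity formula~(c) set $T:=f^{-1}(B(i))$, so that every fibre of $f\restrict T$ has dimension exactly $i$; here I use part~(b) both to know that $B(i)$ is definable and that $\dim f^{-1}(b)=i$ persists for $b\in B(i)(\T^*)$. For the upper bound take $a\in T(\T^*)$ with $\dim(a\mid C)=\dim T$ and put $b:=f(a)\in B(i)$; since $a$ lies in the fibre $f^{-1}(b)$, which is definable over $C\langle b\rangle$ and of dimension $i$, we have $\dim(a\mid C\langle b\rangle)\leq i$ and $\dim(b\mid C)\leq\dim B(i)$, so, using $b\in C\langle a\rangle$ and additivity, $\dim T=\dim(a\mid C)\leq i+\dim B(i)$. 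For the lower bound take $b\in B(i)(\T^*)$ with $\dim(b\mid C)=\dim B(i)$; as $\dim f^{-1}(b)=i$, saturation yields $a\in f^{-1}(b)(\T^*)$ with $\dim(a\mid C\langle b\rangle)=i$, and then $C\langle a\rangle=C\langle a,b\rangle$ together with additivity gives $\dim(a\mid C)=i+\dim B(i)$, whence $\dim T\geq i+\dim B(i)$.
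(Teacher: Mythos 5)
Your treatment of part (b) is essentially sound: the equivalence ``$\dim X\ge i$ iff some $i$-coordinate projection of $X$ has nonempty interior'' does follow from Lemma~\ref{dim2}(v),(vi) together with Corollary~\ref{defdimn}, and ``contains an open box'' is first-order in the parameter $y$, so $B({\ge}i)$ and hence $B(i)$ are definable. But parts (a) and (c) share a genuine gap: you treat definable sets and maps as if they were differentially constructible over their parameters. In (a) you write $C\langle b\rangle\subseteq C\langle a\rangle$ for $b=f(a)$; since $f$ is merely definable in a language containing the ordering, the valuation and the predicates $R_{\Upl}$, $R_{\Upo}$, the point $f(a)$ lies in the definable closure of $C\cup\{a\}$ but there is no a priori reason for it to lie in, or even be differentially algebraic over, the differential field $C\langle a\rangle$. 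In (c) you assert that $a\in f^{-1}(b)$ with $\dim f^{-1}(b)=i$ forces $\dim(a\mid C\langle b\rangle)\le i$ because the fibre is $C\langle b\rangle$-definable; but $\dim f^{-1}(b)\le i$ only supplies, for each choice of $i+1$ coordinates, a nonzero annihilating differential polynomial with coefficients in the ambient model $\T^*$, not in $C\langle b\rangle$. Your opening claim that $\dim X=\max\{\dim(a\mid k)\}$ for \emph{every} subfield $k$ over which $X$ is defined is exactly the point at issue: Lemma~\ref{dim1} computes dimension via differential transcendence degree over the whole model, and the descent to a small field of definition is not free. Your explicit disclaimer that ``the only algebraic input beyond this is additivity in a tower'' is therefore where the proof breaks.

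The missing ingredient is precisely $\d$-boundedness: uniformly in the parameter, finitely many $P_1,\dots,P_m\in K\{Y,Z\}$ over the base such that every zero-dimensional section is contained in $\Zero\big(P_i(y,\cdot)\big)$ for some $i$ with $P_i(y,Z)\ne 0$. Applied to the graph of $f$ this shows $f(a)$ is d-algebraic over $K_0\langle a\rangle$ for a small elementary submodel $K_0$ containing the parameters (repairing (a)); applied fibrewise it lets you descend fibre dimension from $\T^*$ to the base (repairing (c)). The paper does not reprove the fibre-dimension theorem at all: it quotes Proposition~\ref{Kddim} from van den Dries's axiomatic treatment, whose sole hypothesis is $\d$-boundedness, and then verifies in Section~\ref{T} that $\T$ is $\d$-bounded using the quantifier elimination for $T^{\text{nl}}_{\Upl\Upo}$ and the fact that $\Upl(K)$ and $\Upo(K)$ are open-and-closed. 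Your generic-point computation is the right shape for the back end of such a proof, but without establishing and invoking $\d$-boundedness (or an equivalent), the key inequalities in (a) and (c) do not close.
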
 

\noindent
In Section~\ref{d0d} we show that for definable nonempty $S\subseteq \T^n$,
$$ \dim S =0\ \Longleftrightarrow\  \text{$S$ is discrete.}$$
For $S\subseteq \T^n$ to be discrete means as usual that every point of $S$ has a
neighborhood in $\T^n$ that contains no other point of $S$.
For example, $\R^n$ as a subset of $\T^n$ is discrete! Proving the
backwards direction of the equivalence above involves an unusual cardinality argument. Both directions use key results from~\cite{ADH}. 

\medskip\noindent
The rest of the paper is inspired by \cite[Theorem 16.0.3]{ADH}, which suggests 
that for a definable set $S\subseteq \T^n$ to have dimension $0$ amounts to
$S$ being controlled in some fashion by the constant field $\R$. In what fashion? Our first guess was that perhaps every
definable subset of $\T^n$ 
of dimension $0$ is the image of some definable map~${\R^m \to \T^n}$. 
(Every such image has indeed dimension $0$.)
 It turns out, however, that the solution 
set of the algebraic differential equation $yy''=(y')^2$ in $\T$, which has dimension $0$, is {\em not\/} such an 
image: in Section~\ref{sec:pc} we show how this follows from a fact
about automorphisms of $\T$ to be
established in \cite{ADH1}. (In that section we call an image as above
{\em parametrizable by constants}; we have since learned that it already has a name in the literature, namely, {\em internal to the constants}, a special case of a general model-theoretic notion; see~\cite[Section~7.3]{Pillay}.)

The correct way to understand the model-theoretic meaning of 
dimension~$0$ is the concept of {\em co-analyzability}
from \cite{HHM}. 
This is the topic of
Section~\ref{sec:ca}, where we also answer positively a question that
partly motivated our paper:
given definable $S\subseteq \T^{m}$  and definable $f\colon S \to \T^n$, does 
there always exist
an $e\in \N$ such that $|f^{-1}(y)|\le e$ 
for all $y\in \T^n$ for which $f^{-1}(y)$ is finite? In other words, is the
quantifier ``there exist infinitely many'' available for free?

  We thank James Freitag for pointing us to the notion of co-analyzability.

\section{Differential-algebraic Dimension}~\label{dad}

\noindent
We summarize here parts of subsection~2.25 in~\cite{vdD89},
referring to that paper for proofs.
Throughout this section $K$ is a differential field (of characteristic zero with a single distinguished derivation, in this paper), with constant field $C\ne K$. Also, $Y=(Y_1,\dots, Y_n)$ is a tuple of distinct differential indeterminates, and $K\{Y\}$ the ring of differential polynomials in $Y$ over $K$.

\subsection*{Generalities}
Let a set $S\subseteq K^n$ be given. Then the differential polynomials
$P_1,\dots, P_m\in K\{Y\}$ are said to be
{\bf $\d$-algebraically dependent on $S$\/} if for some
nonzero differential polynomial $F\in K\{X_1,\dots, X_m\}$,
$$F\big(P_1(y),\dots, P_m(y)\big)\ =\ 0\ \text{ for all }y=(y_1,\dots, y_n)\in S;$$ 
if no such $F$ exists, we say that $P_1,\dots, P_m$ are
{\bf $\d$-algebraically independent on $S$,} and in that case we must have $m\le n$; the prefix $\d$ stands for {\em differential.}\/ For nonempty $S$ we define the (differential-algebraic) {\bf dimension}
$\dim S$ of $S$ to be the largest $m$ for which there exist $P_1,\dots, P_m\in K\{Y\}$ that are $\d$-algebraically independent
on $S$, and if $S=\emptyset$, then we set $\dim S:= -\infty$. 

In particular, for nonempty $S$, $\dim S =0$ means that for every 
$P\in K\{Y\}$ there
exists a nonzero $F\in K\{X\}$, $X=X_1$, such that $F\big(P(y)\big)=0$ for all $y\in S$. 
As an example, let $a\in K^n$ and consider $S=\{a\}$. For $P\in K\{Y\}$ 
we have $F(P(a))=0$ for $F(X):=X-P(a)$, so $\dim \{a\} =0$. Also, $\dim C^n =0$ by Lemma~\ref{dim1}.

Of course, this notion of dimension is relative to $K$, and if we need to indicate the ambient $K$ we write~$\dim_K S$ instead of 
$\dim S$. But this will hardly be necessary, since $\dim_K S = \dim_L S$ for any differential field extension $L$ of $K$. 

Below we also consider the structure
$(K,S)$: the differential field $K$ equipped with the $n$-ary
relation $S$. 
The following is a useful characterization of dimension in terms
of {\em differential transcendence degree} (for which see \cite[Section 4.1]{ADH}):

\begin{lemma}\label{dim1} Let $(K^*, S^*)$ be a $|K|^+$-saturated elementary
extension of $(K,S)$ and assume $S$ is not empty. Then
$$\dim_K S\ =\ \max\!\big\{\text{differential transcendence degree of $K\<s\>$ over $K$}:\ s\in S^*\big\}.$$
\end{lemma}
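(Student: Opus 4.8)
The plan is to prove the two inequalities $\dim_K S\le t$ and $\dim_K S\ge t$ separately, where $t$ denotes the maximum on the right-hand side. This maximum exists and is attained because $K\<s\>$ is generated over $K$ as a differential field by the $n$ coordinates of $s$, so its differential transcendence degree over $K$ always lies in $\{0,1,\dots,n\}$ (and $S^*\ne\emptyset$ since $S\ne\emptyset$). The common mechanism behind both inequalities is a transfer observation: fix $P_1,\dots,P_m\in K\{Y\}$ and a nonzero $F\in K\{X_1,\dots,X_m\}$; then the assertion that $F\big(P_1(y),\dots,P_m(y)\big)=0$ for all $y\in S$ is a single first-order sentence in the language of $(K,S)$ with parameters from $K$, namely a formula universally quantified over the predicate $S$. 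Since $(K,S)\preceq(K^*,S^*)$, this sentence holds over $S$ if and only if it holds over $S^*$. Ranging over all nonzero $F$, I conclude that $P_1,\dots,P_m$ are $\d$-algebraically independent on $S$ exactly when they are $\d$-algebraically independent on $S^*$.

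For $\dim_K S\ge t$, I would choose $s\in S^*$ at which the maximum $t$ is attained. As the coordinates $s_1,\dots,s_n$ generate $K\<s\>$ over $K$, some subset of them of size $t$, say $s_{i_1},\dots,s_{i_t}$, is a differential transcendence basis of $K\<s\>$ over $K$ and hence is $\d$-algebraically independent over $K$. I claim the coordinate projections $Y_{i_1},\dots,Y_{i_t}$ are $\d$-algebraically independent on $S$: otherwise some nonzero $F\in K\{X_1,\dots,X_t\}$ would satisfy $F(y_{i_1},\dots,y_{i_t})=0$ for all $y\in S$, hence for all $y\in S^*$ by the transfer observation, and evaluating at $s$ would give $F(s_{i_1},\dots,s_{i_t})=0$, contradicting independence over $K$. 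Thus $\dim_K S\ge t$.

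For $\dim_K S\le t$, put $m:=\dim_K S$ and fix $P_1,\dots,P_m\in K\{Y\}$ that are $\d$-algebraically independent on $S$, hence on $S^*$. It suffices to produce a single $s\in S^*$ at which $P_1(s),\dots,P_m(s)$ are $\d$-algebraically independent over $K$, for then $K\<s\>$ has differential transcendence degree $\ge m$ over $K$, whence $t\ge m$. To this end I would consider the partial type $\Sigma(z)$ over $K$ in $(K^*,S^*)$ consisting of the formula $z\in S^*$ together with all inequations $F\big(P_1(z),\dots,P_m(z)\big)\ne 0$, where $F$ ranges over the nonzero elements of $K\{X_1,\dots,X_m\}$. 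This type involves at most $|K|$ parameters, so it is realized by $|K|^+$-saturation once finite satisfiability is checked — and this is the step I expect to carry the real weight. Given finitely many nonzero $F_1,\dots,F_k$, their product $G:=F_1\cdots F_k$ is a nonzero element of the differential polynomial ring $K\{X_1,\dots,X_m\}$, which is an integral domain; the $\d$-algebraic independence of $P_1,\dots,P_m$ on $S^*$ then yields some $s\in S^*$ with $G\big(P_1(s),\dots,P_m(s)\big)\ne 0$, forcing $F_j\big(P_1(s),\dots,P_m(s)\big)\ne 0$ for each $j$. Hence $\Sigma$ is finitely satisfiable, and any realization $s$ makes $P_1(s),\dots,P_m(s)$ $\d$-algebraically independent over $K$, completing the argument.
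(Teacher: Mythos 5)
Your argument is correct: the transfer of $\d$-algebraic (in)dependence between $S$ and $S^*$ via elementarity, the choice of a differential transcendence basis among the coordinates of $s$ for the inequality $\dim_K S\ge t$, and the saturation argument for $\dim_K S\le t$ (using that $K\{X_1,\dots,X_m\}$ is an integral domain to reduce finitely many inequations to a single one, and that the type has only $|K|$ parameters) together give a complete proof. The paper itself supplies no proof of this lemma, deferring to subsection~2.25 of van den Dries's paper on dimension and algebraic boundedness, and your argument is essentially the standard one given there, so there is nothing to flag.
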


\noindent
Here are some easy consequences of the definition of {\em dimension}\/
and Lemma~\ref{dim1}:

\begin{lemma}\label{dim2} Let $S, S_1, S_2\subseteq K^n$. Then: \begin{enumerate}
\item[(i)] if $S$ is finite and nonempty, then $\dim S =0$; $\dim K^n=n$;
\item[(ii)] $\dim S < n\ \Longleftrightarrow\ S\subseteq \big\{y\in K^n:\ P(y)=0\big\}$ for some nonzero $P\in K\{Y\}$;
\item[(iii)] $\dim(S_1\cup S_2)=\max(\dim S_1, \dim S_2)$;
\item[(iv)] $\dim S^{\sigma}=\dim S$ for each permutation $\sigma$ of $\{1,\dots,n\}$, where
$$S^\sigma\ :=\ \big\{\big(y_{\sigma(1)},\dots,y_{\sigma(n)}\big):\ (y_1,\dots,y_n)\in S\big\};$$
\item[(v)] if $m\le n$ and $\pi\colon K^n \to K^m$ is given by $\pi(y_1,\dots, y_n)=(y_1,\dots, y_m)$, then $\dim \pi(S) \le \dim S$;
\item[(vi)] if $\dim S=m$, then $\dim \pi(S^{\sigma})=m$ for some
$\sigma$ as in \textup{(iv)} and $\pi$ as in \textup{(v)}. 
\end{enumerate}
\end{lemma}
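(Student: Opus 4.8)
The plan is to reduce every part to Lemma~\ref{dim1} by fixing one $|K|^+$-saturated elementary extension of the structure $(K,S,S_1,S_2)$, naming all the relations that occur, so that for each definable set in sight its dimension equals the maximal differential transcendence degree of $K\langle s\rangle$ over $K$ as $s$ ranges over the realizations of that set in the extension. Before doing that I would record two facts used throughout: that $\dim$ is monotone under inclusion (a tuple of differential polynomials $\d$-algebraically independent on $S$ stays independent on any larger set, since a nonzero $F$ vanishing on a superset already vanishes on $S$), and that $\dim S\le n$ always, by the remark that $m\le n$ whenever $P_1,\dots,P_m$ are $\d$-algebraically independent on a subset of $K^n$.

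For part (i): given finite nonempty $S$ and any $P\in K\{Y\}$, the one-variable polynomial $F(X):=\prod_{y\in S}(X-P(y))$ is nonzero and kills $P$ on $S$, so $\dim S=0$ directly from the definition. For $\dim K^n=n$ I would use Lemma~\ref{dim1} with $S=K^n$: the upper bound is automatic, and for the lower bound I must produce $s\in(K^*)^n$ that is $\d$-algebraically independent over $K$. By saturation this reduces to finite satisfiability of the partial type $\{P(y)\ne 0 : 0\ne P\in K\{Y\}\}$, i.e.\ (after multiplying the finitely many $P_i$ together) to the fact that a single nonzero differential polynomial cannot vanish identically on $K^n$; this is precisely where $C\ne K$ is used, and I would cite it from \cite{vdD89}.

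Part (ii) carries the real content. The direction $(\Leftarrow)$ is soft: if $S\subseteq\{y:P(y)=0\}$ with $P\ne 0$, the sentence $\forall y\,(S(y)\to P(y)=0)$ transfers to the saturated extension, so every $s\in S^*$ satisfies the nontrivial relation $P(s)=0$ and hence has differential transcendence degree $<n$; by Lemma~\ref{dim1}, $\dim S<n$. For $(\Rightarrow)$ I would argue by contraposition via a compactness step: if no single nonzero $P$ cuts out $S$, then for each nonzero $P$ some point of $S$ lies off $\{P=0\}$, so the partial type $\{S(y)\}\cup\{P(y)\ne 0 : 0\ne P\in K\{Y\}\}$ is finitely satisfiable (again multiply the finitely many $P_i$), hence realized by some $s\in S^*$ which is then $\d$-algebraically independent over $K$, forcing $\dim S\ge n$ by Lemma~\ref{dim1}. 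I expect this extraction of a full-dimensional realization to be the main obstacle, though it rests on the same compactness idea as in (i).

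Parts (iii)--(vi) are then quick once one sees how the set operations interact with passing to the saturated extension. For (iii), $(S_1\cup S_2)^*=S_1^*\cup S_2^*$, so each realization lies in $S_1^*$ or $S_2^*$ and the maximum of the two dimensions is attained, while the reverse inequality is monotonicity. For (iv), a coordinate permutation gives $K\langle s\rangle=K\langle s^\sigma\rangle$, so the differential transcendence degrees, and hence the dimensions, agree. For (v), I would check $(\pi S)^*=\pi(S^*)$ (the existential quantifier defining $\pi$ is handled by saturation) and note $K\langle\pi(s)\rangle\subseteq K\langle s\rangle$, so the degree cannot increase. Finally, for (vi) I would take $s\in S^*$ realizing $\dim S=m$, choose a differential transcendence basis of size $m$ among its coordinates $s_1,\dots,s_n$ (using the exchange property of differential algebraic independence in characteristic $0$), and let $\sigma$ move those coordinates to the front; then $\pi(S^\sigma)$ has a realization of differential transcendence degree $m$, giving $\dim\pi(S^\sigma)\ge m$, while (iv) and (v) yield $\dim\pi(S^\sigma)\le\dim S^\sigma=\dim S=m$.
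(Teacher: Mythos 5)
Your proof is correct and follows exactly the route the paper intends: the paper gives no proof of this lemma, describing the items as ``easy consequences of the definition of \emph{dimension} and Lemma~\ref{dim1}'' and deferring details to \cite{vdD89}, and your arguments (the product polynomial for finite $S$, the compactness/saturation step isolating the fact that a nonzero differential polynomial cannot vanish on all of $K^n$ when $C\ne K$, and the reduction of (iii)--(vi) to differential transcendence degree in a saturated extension) are precisely the intended ones. No gaps.
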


\noindent
The next two lemmas are not in \cite{vdD89}, and are left as
easy exercises: 

\begin{lemma}\label{dim3} $\dim(S_1\times S_2)=\dim S_1 + \dim S_2$ for $S_1\subseteq K^m$ and $S_2\subseteq K^n$. 
\end{lemma}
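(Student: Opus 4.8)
The plan is to prove the two inequalities $\dim(S_1\times S_2)\le \dim S_1+\dim S_2$ and $\dim(S_1\times S_2)\ge \dim S_1+\dim S_2$ separately, after disposing of the trivial case. First I would observe that if $S_1$ or $S_2$ is empty then so is $S_1\times S_2$, and both sides equal $-\infty$ under the convention $-\infty+k=-\infty$; so I may assume both factors nonempty and set $d_1:=\dim S_1\ge 0$ and $d_2:=\dim S_2\ge 0$. Throughout, $Y=(Y_1,\dots,Y_m)$ and $Z=(Z_1,\dots,Z_n)$ are the two blocks of differential indeterminates, so that $K\{Y,Z\}$ is the relevant ring for $S_1\times S_2\subseteq K^{m+n}$, and I freely regard $K\{Y\}$ and $K\{Z\}$ as subrings of $K\{Y,Z\}$.

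For the upper bound I would invoke Lemma~\ref{dim1}. Passing to a single $|K|^+$-saturated elementary extension $(K^*,S_1^*,S_2^*)$ of $(K,S_1,S_2)$, its reducts and the definable set $S_1^*\times S_2^*=(S_1\times S_2)^*$ furnish $|K|^+$-saturated elementary extensions to which Lemma~\ref{dim1} applies simultaneously. For any $s=(s_1,s_2)\in S_1^*\times S_2^*$ the tower law for differential transcendence degree gives
\[ \text{d-trdeg}(K\langle s_1,s_2\rangle/K)\ =\ \text{d-trdeg}(K\langle s_1\rangle/K)+\text{d-trdeg}(K\langle s_1,s_2\rangle/K\langle s_1\rangle). \]
Here the first summand is $\le d_1$ by Lemma~\ref{dim1}, while the second is $\le\text{d-trdeg}(K\langle s_2\rangle/K)\le d_2$ because enlarging the base field cannot increase the differential transcendence degree of a fixed tuple. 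Hence the left side is $\le d_1+d_2$; taking the maximum over $s$ and applying Lemma~\ref{dim1} to $S_1\times S_2$ yields $\dim(S_1\times S_2)\le d_1+d_2$.

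For the lower bound I would argue directly from the definition, which avoids having to amalgamate independent realizations inside the saturated model. Choose $P_1,\dots,P_{d_1}\in K\{Y\}$ that are $\d$-algebraically independent on $S_1$ and $R_1,\dots,R_{d_2}\in K\{Z\}$ that are $\d$-algebraically independent on $S_2$, view all of them in $K\{Y,Z\}$, and write $P(y):=(P_1(y),\dots,P_{d_1}(y))$ and $R(z):=(R_1(z),\dots,R_{d_2}(z))$. Suppose a nonzero $F\in K\{U_1,\dots,U_{d_1},V_1,\dots,V_{d_2}\}$ satisfies $F(P(y),R(z))=0$ for all $(y,z)\in S_1\times S_2$. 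Fixing $y\in S_1$ and substituting the constants $P_i(y)\in K$ produces $F_y:=F(P(y),V)\in K\{V_1,\dots,V_{d_2}\}$ with $F_y(R(z))=0$ for all $z\in S_2$; independence of $R_1,\dots,R_{d_2}$ on $S_2$ then forces $F_y=0$. Expanding $F=\sum_\beta G_\beta\,N_\beta$ with the $N_\beta$ distinct differential monomials in $V_1,\dots,V_{d_2}$ (hence $K$-linearly independent in $K\{V\}$) and $G_\beta\in K\{U_1,\dots,U_{d_1}\}$, the identity $F_y=0$ says $G_\beta(P(y))=0$ for every $\beta$ and every $y\in S_1$; since $F\ne 0$ some $G_\beta$ is nonzero, contradicting independence of $P_1,\dots,P_{d_1}$ on $S_1$. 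Thus the combined list is $\d$-algebraically independent on $S_1\times S_2$, giving $\dim(S_1\times S_2)\ge d_1+d_2$.

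The only genuinely delicate ingredient is the subadditivity of differential transcendence degree used in the upper bound, namely the tower law together with $\text{d-trdeg}(K\langle s_1\rangle\langle s_2\rangle/K\langle s_1\rangle)\le\text{d-trdeg}(K\langle s_2\rangle/K)$; this is the standard matroid behavior of $\d$-transcendence degree, which I would cite from \cite[Section~4.1]{ADH}. I expect the lower bound to be entirely routine once phrased through the coefficient-extraction argument above, and in particular to need neither saturation nor the free amalgamation of $s_1$ and $s_2$ over $K$ that a purely transcendence-degree proof of that direction would have required.
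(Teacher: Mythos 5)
Your proof is correct. Note that the paper does not actually supply an argument for Lemma~\ref{dim3}: it is explicitly ``left as an easy exercise,'' so there is nothing to compare against; your write-up is a valid solution to that exercise. Both halves are sound: the upper bound via Lemma~\ref{dim1} in a common $|K|^+$-saturated extension of $(K,S_1,S_2)$, using additivity of differential transcendence degree in towers plus its monotone decrease under base extension, and the lower bound via the coefficient-extraction argument showing that the concatenated lists $P_1,\dots,P_{d_1}\in K\{Y\}$ and $R_1,\dots,R_{d_2}\in K\{Z\}$ remain $\d$-algebraically independent on $S_1\times S_2$. One terminological quibble: the values $P_i(y)$ are elements of $K$, not constants in the differential sense; what you are really using is that $U_i\mapsto P_i(y)$ extends to a differential $K\{V\}$-algebra homomorphism $K\{U,V\}\to K\{V\}$, so that $F_y$ is again a differential polynomial and the monomial expansion in the $V_j^{(r)}$ behaves as you claim. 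This does not affect the argument.
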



\begin{lemma}\label{dim4} $\dim_K S = \dim_{K^*} S^*$ in the situation of Lemma~\ref{dim1}.
\end{lemma}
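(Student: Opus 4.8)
The plan is to prove the two inequalities separately, using the invariance of $\dim$ under differential field extensions for one direction and the differential-transcendence-degree characterization of Lemma~\ref{dim1} for the other.

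For $\dim_K S\le \dim_{K^*}S^*$, first recall from Section~\ref{dad} that $\dim_K S=\dim_{K^*}S$, since $K^*$ is a differential field extension of $K$ and $S\subseteq K^n\subseteq (K^*)^n$ is literally the same set. Next, $S\subseteq S^*$, and $\dim$ is monotone under inclusion: a nonzero $F$ witnessing $\d$-algebraic dependence of $P_1,\dots,P_m$ on $S^*$ also witnesses it on the subset $S$, so $\d$-algebraic independence on $S$ forces $\d$-algebraic independence on $S^*$ (this is also immediate from Lemma~\ref{dim2}(iii)). Hence $\dim_{K^*}S\le \dim_{K^*}S^*$, and combining the two gives $\dim_K S\le \dim_{K^*}S^*$.

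For the reverse inequality I would pass to a common saturated extension so that Lemma~\ref{dim1} applies to both structures at once. Choose a $|K^*|^+$-saturated elementary extension $(K^{**},S^{**})$ of $(K^*,S^*)$. Since $(K^*,S^*)$ is an elementary extension of $(K,S)$ and $|K|\le|K^*|$, the structure $(K^{**},S^{**})$ is simultaneously a $|K^*|^+$-saturated elementary extension of $(K^*,S^*)$ and a $|K|^+$-saturated elementary extension of $(K,S)$, while $S^*\ne\emptyset$ because $\emptyset\ne S\subseteq S^*$. Applying Lemma~\ref{dim1} twice, over this one extension $(K^{**},S^{**})$, yields
\begin{align*}
\dim_K S&=\max\big\{\text{differential transcendence degree of }K\langle s\rangle\text{ over }K:\ s\in S^{**}\big\},\\
\dim_{K^*}S^*&=\max\big\{\text{differential transcendence degree of }K^*\langle s\rangle\text{ over }K^*:\ s\in S^{**}\big\}.
\end{align*}
Now both maxima range over the same set $S^{**}$. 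Since $K\subseteq K^*$, differential transcendence degree can only drop when the base field grows, so the differential transcendence degree of $K^*\langle s\rangle$ over $K^*$ is at most that of $K\langle s\rangle$ over $K$ for every $s\in S^{**}$; taking maxima gives $\dim_{K^*}S^*\le \dim_K S$.

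The only point requiring real care is this second inequality: one must arrange that a single extension serves as the saturated elementary extension demanded by Lemma~\ref{dim1} for both $(K,S)$ and $(K^*,S^*)$, which is exactly what the choice of a $|K^*|^+$-saturated $(K^{**},S^{**})$ achieves, and then invoke the base-monotonicity of differential transcendence degree (the differential analogue of $\operatorname{tr.deg}(L'(s)/L')\le \operatorname{tr.deg}(L(s)/L)$ for $L\subseteq L'$). Everything else follows directly from the definitions and the cited facts.
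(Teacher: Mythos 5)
Your proof is correct. The paper gives no argument here --- Lemmas~\ref{dim3} and~\ref{dim4} are explicitly ``left as easy exercises'' --- and your two-step argument (monotonicity plus invariance under differential field extensions for $\le$, and a common $|K^*|^+$-saturated extension together with base-monotonicity of differential transcendence degree for $\ge$) is exactly the natural route via Lemma~\ref{dim1} that the authors presumably intend.
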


\noindent
Let now $K^*$ be any elementary extension of $K$ and suppose $S$ is definable in $K$, say by the formula $\phi(y_1,\dots, y_n)$ in the language of differential fields with names for the elements of $K$. Let $S^*\subseteq (K^*)^n$ be defined in $K^*$ by the same formula~$\phi(y_1,\dots, y_n)$. Note that $S^*$ does not depend on the choice of $\phi$. We have the following easy consequence of 
Lemma~\ref{dim4}:

\begin{cor} $\dim_K S = \dim_{K^*} S^*$.
\end{cor}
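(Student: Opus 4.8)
The plan is to reduce to Lemma~\ref{dim4} by passing to a common, sufficiently saturated extension. First I would dispose of the degenerate case: since $K^*$ is an elementary extension of $K$ in the language of differential fields with names for the elements of $K$, and $S$, $S^*$ are defined by the same formula $\phi$, we have $K\models \exists \bar y\,\phi(\bar y)$ iff $K^*\models \exists\bar y\,\phi(\bar y)$; hence $S=\emptyset$ iff $S^*=\emptyset$, in which case both dimensions equal $-\infty$ and there is nothing to prove. So I may assume $S\ne\emptyset$, which is what is needed to invoke the hypotheses of Lemma~\ref{dim1}.

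The key observation is that, because $S$ is $\phi$-definable over $K$ and $K\preceq K^*$, the pair $(K^*,S^*)$ is an elementary extension of $(K,S)$ as structures in the language of differential fields augmented by an $n$-ary relation symbol interpreted as $S$ (resp.\ $S^*$): the elementary embedding $K\to K^*$ preserves $\phi$, hence preserves membership in the $\phi$-definable relation. Note also that along any elementary extension of $(K,S)$ the interpretation of the relation symbol continues to coincide with $\phi$ evaluated there, so there is no ambiguity in the meaning of ``$S^*$''.

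Next I would choose a $|K^*|^+$-saturated elementary extension $(K^{**},S^{**})$ of $(K^*,S^*)$. By the previous paragraph and transitivity of elementarity, $(K^{**},S^{**})$ is also an elementary extension of $(K,S)$. Since $|K|\le |K^*|$, this structure is in particular $|K|^+$-saturated, so both chains $(K,S)\preceq(K^{**},S^{**})$ and $(K^*,S^*)\preceq(K^{**},S^{**})$ fall under the situation of Lemma~\ref{dim1}. Applying Lemma~\ref{dim4} to each in turn gives
$$\dim_K S\ =\ \dim_{K^{**}}S^{**}\ =\ \dim_{K^*}S^*,$$
which is the desired equality.

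There is no genuine obstacle here; the only thing to be careful about is the model-theoretic bookkeeping — namely that expanding $K^*$ by the relation $S^*$ again yields an elementary extension of the expanded structure $(K,S)$, and that $|K^*|^+$-saturation of the common extension automatically supplies the $|K|^+$-saturation demanded by Lemma~\ref{dim1}. Once these points are checked, the result drops out of two applications of Lemma~\ref{dim4}.
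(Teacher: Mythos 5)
Your argument is correct and is exactly the intended one: the paper states this corollary without proof as an ``easy consequence'' of Lemma~\ref{dim4}, and the natural way to derive it is precisely your route of passing to a common $|K^*|^+$-saturated elementary extension of $(K,S)$ and $(K^*,S^*)$ and applying Lemma~\ref{dim4} twice. The bookkeeping points you flag (the empty case, the persistence of the $\phi$-definition of the relation symbol along elementary extensions, and that $|K^*|^+$-saturation implies $|K|^+$-saturation) are the right ones and are handled correctly.
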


\subsection*{Differential boundedness} For a set $S\subseteq K^{n+1}$ and $y\in K^n$ we define $$S(y)\ :=\ \big\{z\in K:\ (y,z)\in S\big\}  \qquad\text{(the section of $S$ above $y$).}$$ 
We say that $K$ is {\bf $\d$-bounded} if for every definable set $S\subseteq K^{n+1}$ there exist $P_1,\dots, P_m\in K\{Y,Z\}$ (with $Z$ an extra indeterminate) such that if $y\in K^n$ and 
$\dim S(y)=0$, then $S(y)\subseteq \{z\in K:\ P_i(y,z)=0\}$
for some $i\in \{1,\dots,m\}$ with $P_i(y,Z)\ne 0$. (In view of
Lemma~\ref{dim2}(ii), this is equivalent to the differential field $K$ being {\em differentially bounded\/} as defined on p.~203 of~\cite{vdD89}.) Here is the main consequence of $\d$-boundedness, taken from \cite{vdD89}:

\begin{prop}\label{Kddim} Assume $K$ is $\d$-bounded. Let $S\subseteq K^m$ and $f\colon S \to K^n$ be definable. Then $\dim S\ge \dim f(S)$. Moreover, for every $i\in \{0,\dots,m\}$ the set $B(i):=\big\{y\in K^n:\ \dim f^{-1}(y) =i\big\}$ is definable, and $\dim f^{-1}\big(B(i)\big)=i+\dim B(i)$.
\end{prop}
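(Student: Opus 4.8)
Assuming the second and third conclusions, the first follows at once: the sets $B(0),\dots,B(m)$ partition $f(S)$, so Lemma~\ref{dim2}(iii) gives $\dim f(S)=\max_i\dim B(i)$, while the preimages $f^{-1}(B(i))$ partition $S$, whence $\dim S=\max_i\dim f^{-1}(B(i))=\max_i\big(i+\dim B(i)\big)\ge\max_i\dim B(i)=\dim f(S)$. Thus it suffices to prove that each $B(i)$ is definable and that $\dim f^{-1}(B(i))=i+\dim B(i)$.

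Next I would reduce to a coordinate projection. Let $\Gamma:=\{(z,y)\in K^m\times K^n:\ z\in S,\ y=f(z)\}$ be the definable graph of $f$, and let $p\colon\Gamma\to K^n$ forget the first $m$ coordinates, so that $\Gamma\cap p^{-1}(B)$ is in definable bijection with $f^{-1}(B)$ for every $B\subseteq K^n$; in particular $p^{-1}(y)$ corresponds to $f^{-1}(y)$. Since the first projection $\Gamma\to S$ is a bijection, Lemma~\ref{dim2}(v) gives $\dim S\le\dim\Gamma$, and for the reverse I would pass to a saturated extension and use Lemma~\ref{dim1}: for $(z^*,y^*)\in\Gamma^*$ the set $\{y:(z^*,y)\in\Gamma\}=\{y^*\}$ is a singleton, hence $0$-dimensional over $K\<z^*\>$, so $y^*$ is differentially algebraic over $K\<z^*\>$ and contributes nothing to the differential transcendence degree; thus $\dim\Gamma=\dim S$, and likewise $\dim(\Gamma\cap p^{-1}(B))=\dim f^{-1}(B)$. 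It therefore remains to treat, for a definable $T\subseteq K^k\times K^n$ with $p\colon T\to K^n$ forgetting the first $k$ coordinates, the sets $C(i):=\{y:\ \dim T(y)=i\}$ (where $T(y)\subseteq K^k$ is the section): to show each $C(i)$ is definable and $\dim\big(T\cap p^{-1}(C(i))\big)=i+\dim C(i)$.

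The definability of the $C(i)$ is where I expect the real work to lie, and I would prove it by induction on $k$. For $k=1$ the sections lie in $K$ and have dimension $0$ or $1$; after reordering coordinates via Lemma~\ref{dim2}(iv) so the section variable is last, $\d$-boundedness supplies $P_1,\dots,P_r\in K\{Y,Z\}$ for which $C(0)=\{y:\ \bigvee_j(P_j(y,Z)\ne0\wedge\forall z\,((y,z)\in T\to P_j(y,z)=0))\}$ is definable (the equivalence of this formula with $\dim T(y)=0$ being Lemma~\ref{dim2}(ii)), and then $C(1)=\{y:\exists z\,(y,z)\in T\}\setminus C(0)$ is definable as well. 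For the inductive step I would peel off one section coordinate $v$, writing points of $T$ as $(u,v,y)$ with $u\in K^{k-1}$; applying the case $k=1$ to the map forgetting $v$ produces definable $E_0,E_1\subseteq K^{k-1}\times K^n$ recording whether the $v$-section over $(u,y)$ has dimension $0$ or $1$. These are families over $K^n$ with sections in $K^{k-1}$, to which the inductive hypothesis applies; and the $k=1$ section formula, applied to $T(y)\subseteq K^{k-1}\times K$ for each fixed $y$, gives $\dim T(y)=\max\big(\dim E_0(y),\,1+\dim E_1(y)\big)$. Since by the inductive hypothesis the loci $\{y:\dim E_0(y)=a\}$ and $\{y:\dim E_1(y)=b\}$ are definable, $C(i)$ is a Boolean combination of them and hence definable.

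With the $C(i)$ definable, the dimension formula falls out of Lemma~\ref{dim1}. I would pass to a $|K|^+$-saturated elementary extension $(K^*,T^*)$; by the Corollary above, $\dim$ is computed the same over $K^*$ as over $K$, and $C(i)^*=\{y^*:\dim T^*(y^*)=i\}$, since the formulas built above still cut out the fibre-dimension loci in $K^*$: the $\Leftarrow$ inclusion is Lemma~\ref{dim2}(ii), while the $\Rightarrow$ inclusion holds because $\d$-boundedness, resolved into one first-order sentence per bound on the order and degree of the witnessing differential polynomial, transfers by elementarity. For the upper bound, any $(u^*,y^*)\in\big(T\cap p^{-1}(C(i))\big)^*$ has $y^*\in C(i)^*$ and $u^*$ in the $i$-dimensional section $T^*(y^*)$, so by additivity of differential transcendence degree along $K\subseteq K\<y^*\>\subseteq K\<y^*\>\<u^*\>$ the degree of $K\<u^*,y^*\>$ over $K$ is at most $i+\dim C(i)$. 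For the lower bound I would first pick, by Lemma~\ref{dim1}, a point $y^*\in C(i)^*$ with $K\<y^*\>$ of differential transcendence degree $\dim C(i)$ over $K$; as $|K\<y^*\>|=|K|$, saturation together with Lemma~\ref{dim1} applied over $K\<y^*\>$ yields $u^*\in T^*(y^*)$ with $K\<y^*\>\<u^*\>$ of differential transcendence degree $i$ over $K\<y^*\>$, so that $(u^*,y^*)$ realizes degree $i+\dim C(i)$. Taking the maximum over such points and invoking Lemma~\ref{dim1} once more gives $\dim\big(T\cap p^{-1}(C(i))\big)=i+\dim C(i)$, which completes the proof.
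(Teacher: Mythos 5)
The paper itself gives no proof of this proposition: it is quoted from \cite{vdD89}, and your proposal essentially reconstructs the argument from that source --- reduce to coordinate projections via the graph, establish definability of the fibre-dimension loci by induction on the number of fibre variables with $\d$-boundedness carrying the base case, and deduce the dimension formula from additivity of differential transcendence degree in a saturated extension via Lemma~\ref{dim1}. That architecture is right, and your last two paragraphs are sound in outline.

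The one step whose stated justification would fail is the reduction $\dim\Gamma=\dim S$. You say the fibre $\{y^*\}$ of $\Gamma^*$ over $z^*$ ``is a singleton, hence $0$-dimensional over $K\<z^*\>$, so $y^*$ is differentially algebraic over $K\<z^*\>$.'' But ``finite implies dimension $0$'' (Lemma~\ref{dim2}(i), or the paper's example with $F(X)=X-P(a)$) is only valid for sets whose coordinates lie in the base field: a singleton $\{a\}$ with $a$ differentially transcendental over $L$ has $\dim_L\{a\}=1$, since no nonzero $F\in L\{X\}$ annihilates $a$. So ``$\{y^*\}$ is $0$-dimensional over $K\<z^*\>$'' is exactly equivalent to the conclusion you are after, and ``it is a singleton'' does not deliver it; this is precisely a point where $\d$-boundedness must enter, since without such a hypothesis definable maps can raise dimension. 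The repair uses the same transfer device you deploy later for $C(i)^*$: each coordinate family $\Gamma_j\subseteq K^m\times K$ has sections $\Gamma_j(z)=\{f_j(z)\}$ with $f_j(z)\in K$, hence genuinely of dimension $0$ over $K$; $\d$-boundedness yields $P_1,\dots,P_r\in K\{Y,Z\}$ such that every $f_j(z)$ is a zero of some nonzero $P_l(z,Z)$, and this first-order statement transfers to $K^*$, making $y^*_j=f_j^*(z^*)$ differentially algebraic over $K\<z^*\>$. (Two smaller points: your formula for $C(0)$ should also require $T(y)\ne\emptyset$, and since the inductive step for definability invokes the $k=1$ dimension formula, that formula must be proved before the induction on $k$ begins --- both are easily arranged.)
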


\noindent
As $\T$ is $\d$-bounded (see Section~\ref{T}), this 
gives Theorem~\ref{ddim}. Differentially closed fields are $\d$-bounded, as pointed out in \cite{vdD89}.
Guzy and Point~\cite{GP} (see~also~\cite{BMR}) show that existentially closed ordered differential fields, and Scanlon's
$\d$-henselian valued differential fields with many constants  (see \cite[Chapter~8]{ADH}) are $\d$-bounded.

\section{Dimension and Codimension}\label{dimcodim}

\noindent
This section will not be used in the rest of this paper, but is included
for its own sake. The main
result is Corollary~\ref{dico}. A byproduct of the treatment here is a 
simpler proof of \cite[Theorem 5.9.1]{ADH} that avoids the nontrivial facts about regular local rings used in \cite{ADH},
where we followed closely Johnson's proof in \cite{JJ} of a
more general result.

\bigskip\noindent
Let $y=(y_1,\dots,y_n)$ be a tuple of elements of a differential field extension of $K$, and
let $d$ be the differential transcendence degree of $F:=K\<y\>$ over $K$: there are~${i_1< \dots < i_d}$ in $\{1,\dots,n\}$ such that $y_{i_1},\dots,y_{i_d}$ are $\d$-algebraically independent over~$K$, 
but there are no $i_1< \dots < i_d< i_{d+1}$ in $\{1,\dots,n\}$ such that $y_{i_1},\dots,y_{i_d}, y_{i_{d+1}}$ are $\d$-algebraically independent over $K$. We wish to characterize  $d$ alternatively as follows: there should exist~${n-d}$ ``independent'' relations
$P_1(y)=\dots = P_{n-d}(y)=0$, with all $P_i\in K\{Y\}$, but not more than $n-d$
such relations. 
The issue here   
is what ``independent'' should mean. 

We say that a $\d$-polynomial $P\in K\{Y\}$ has order at most $\vec r=(r_1,\dots, r_n)\in \N^n$ if
$P\in K\big[Y_j^{(r)}:\ 1\le j\le n,\ 0\le r\le r_j\big]$.
Given $P_1,\dots, P_m\in K\{Y\}$ of order at most $\vec r\in \N^n$, consider the $m\times n$-matrix over 
$F$ with $i,j$-entry 
$$     \frac{\partial P_i}{\partial Y_j^{(r_j)}}(y) \qquad (i=1,\dots,m,\ j=1,\dots,n).$$
This matrix has rank $\le \min(m,n)$.
We say that $P_1,\dots,P_m$ are {\bf strongly $\d$-in\-de\-pen\-dent at $y$} if 
for some  $\vec{r}\in\N^n$ with $P_1,\dots,P_m$ of order at most~$\vec r$, this matrix has rank $m$; thus $m\le n$ in that case.

Set $R:=K\{Y\}$ and $\frak{p}:=\big\{P\in R:P(y)=0\big\}$,  a differential prime ideal of $R$. With these notations we have:

\begin{lemma} There are $P_1,\dots,P_{n-d}\in\frak p$ that are strongly $\d$-independent at $y$.
\end{lemma}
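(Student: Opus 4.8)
The plan is to use the Ritt--Kolchin theory of characteristic sets for the prime differential ideal $\frak p$. Fix any ranking of the derivatives $Y_j^{(r)}$, and let $A_1,\dots,A_t\in\frak p$ be a characteristic set of $\frak p$, ordered so that their leaders $v_1\prec\dots\prec v_t$ increase; write $v_i=Y_{c_i}^{(e_i)}$. Since a characteristic set is autoreduced, distinct leaders are derivatives of distinct indeterminates, so $c_1,\dots,c_t$ are distinct. The two properties I would invoke are the standard ones: the only element of $\frak p$ reduced with respect to $A_1,\dots,A_t$ is $0$, and no separant $S_i:=\partial A_i/\partial v_i$ lies in $\frak p$, i.e.\ $S_i(y)\neq 0$ for each $i$.

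First I would show $t=n-d$ by checking that the indeterminates not occurring as leaders (the parametric ones) give, at $y$, a differential transcendence basis of $F$ over $K$. Their $\d$-algebraic independence is immediate: a differential-polynomial relation among the corresponding $y_j$ lies in $\frak p$ and, involving no leader and no proper derivative of a leader, is reduced with respect to $A_1,\dots,A_t$, hence is $0$. For the reverse I would induct on $i$: every derivative occurring in $A_i$ is $\preceq v_i$, so $A_i$ involves, besides $Y_{c_i}$, only parametric indeterminates and the $Y_{c_{i'}}$ with $i'<i$; since $S_i(y)\neq 0$, substituting the already-treated values turns $A_i$ into a nonzero differential polynomial in $Y_{c_i}$ of order $e_i$, witnessing that $y_{c_i}$ is $\d$-algebraic over $K$ adjoined the parametric $y_j$ and $y_{c_1},\dots,y_{c_{i-1}}$. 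Hence $d=n-t$. (Alternatively one may simply cite the standard identification of the differential transcendence degree with the number of parametric indeterminates of a characteristic set.)

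Then I would take $P_i:=A_i$ and produce a suitable $\vec r$. Set $r_{c_i}:=e_i$ for $1\le i\le t$, and choose the remaining $r_j$ at least as large as the order of $Y_j$ in each $A_i$. This is consistent: for $i\neq i'$ autoreducedness forbids any proper derivative of $v_{i'}$ in $A_i$, so $Y_{c_{i'}}$ occurs in $A_i$ only up to order $e_{i'}=r_{c_{i'}}$, and thus every $A_i$ has order at most $\vec r$. Now form the $t\times n$ matrix with $(i,j)$-entry $\partial A_i/\partial Y_j^{(r_j)}$ evaluated at $y$, and look at the $t$ columns indexed by $c_1,\dots,c_t$. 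For $i'>i$ one has $v_{i'}\succ v_i$ while every derivative in $A_i$ is $\preceq v_i$, so $v_{i'}$ does not occur in $A_i$ and that entry vanishes; the restricted matrix is therefore lower triangular, with diagonal entries $\partial A_i/\partial v_i(y)=S_i(y)\neq 0$, hence invertible. So the $t\times n$ matrix has rank $t$, i.e.\ $A_1,\dots,A_t$ are strongly $\d$-independent at $y$; and $t=n-d$ gives the required polynomials.

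The main obstacle is the simultaneous bookkeeping of orders: one must fit a single $\vec r$ for which each leader $v_i$ sits at the top order $r_{c_i}$ of its column while no other $A_{i'}$ exceeds that order in the same variable. This is exactly what autoreducedness guarantees, so the real content lies in invoking the characteristic-set machinery (existence, the reduced-to-zero property, non-vanishing separants) and in the identification $t=n-d$; once these are secured, the triangularity and the separant evaluation are routine.
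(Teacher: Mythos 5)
Your argument is correct, but it reaches the conclusion by a genuinely different (and heavier) route than the paper. The paper permutes coordinates so that $y_{m+1},\dots,y_n$ ($m=n-d$) is a differential transcendence basis of $K\langle y\rangle$ over $K$ and, for each $i\le m$, takes $P_i\in K\{Y_i,Y_{m+1},\dots,Y_n\}$ with $P_i(Y_i,y_{m+1},\dots,y_n)$ a minimal annihilator of $y_i$ over $K\langle y_{m+1},\dots,y_n\rangle$; since each $P_i$ involves no $Y_{i'}$ with $i'\le m$, $i'\ne i$, the relevant $m\times m$ submatrix is \emph{diagonal} with the nonvanishing separants on the diagonal. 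You instead import the Ritt--Kolchin characteristic-set machinery for $\frak p$ and get a \emph{triangular} submatrix; the two proofs share the same engine (one relation per dependent coordinate, nonvanishing separant, full-rank submatrix), but the paper's version is self-contained given only the notions of differential transcendence basis and minimal annihilator already in use, whereas yours must additionally invoke existence of characteristic sets, the reduction-to-zero property, nonvanishing of separants, and the identification $t=n-d$ of the differential dimension with the number of parametric indeterminates. What your route buys is a canonical choice of relations attached to a ranking, and the count $t=n-d$ as a byproduct rather than an input. One caveat: your inductive sub-argument for $t=n-d$ asserts that $A_i$ involves, besides $Y_{c_i}$, only parametric indeterminates and the $Y_{c_{i'}}$ with $i'<i$; for a general ranking this is false ($A_i$ may involve $Y_{c_{i'}}^{(k)}$ with $i'>i$ as long as $Y_{c_{i'}}^{(k)}\prec v_i$ and $k\le e_{i'}$), so that induction needs repair -- but since you also offer to cite the standard identification of the differential dimension, this does not affect the correctness of the proof as a whole. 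The part that actually delivers strong $\d$-independence (the choice of $\vec r$ via partial reducedness and the triangularity of the leader columns) is sound.
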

\begin{proof} Set $m:=n-d$ and permute 
indices such that $y_{m+1},\dots, y_n$ is a differential transcendence base
of $F=K\<y\>$ over~$K$. For $i=1,\dots,m$, pick 
$$P_i(Y_i,Y_{m+1},\dots, Y_n)\in K\{Y_i,Y_{m+1},\dots, Y_n\}\ \subseteq\ K\{Y\}$$
such that $P_i(Y_i,y_{m+1},\dots, y_n)$ is a minimal annihilator
of $y_i$ over $K\<y_{m+1},\dots, y_{n}\>$. Let $P_i$ have order $r_i$ in $Y_i$.
Then the minimality of $P_i$ gives $$\frac{\partial P_i}{\partial Y_i^{(r_i)}}(y_i,y_{m+1},\dots, y_n)\ne 0, \qquad (i=1,\dots,m).$$
Next we take $r_{m+1},\dots, r_n\in \N$ such that all $P_i$ have order
$\le r_j$ in $Y_j$ for $j=m+1,\dots,n$. Considering all $P_i$ as elements of
$K\{Y\}$ we see that $P_1,\dots, P_m$ have order $\le (r_1,\dots, r_n)$, and
that the $m\times m$ matrix  
$$\left(\frac{\partial P_i}{\partial Y_j^{(r_j)}}(y)\right) \qquad 
(1\le i, j\le m)$$  
is diagonal, with nonzero determinant. 
\end{proof}

\noindent
We refer to \cite[Section 5.4]{ADH} for what it means for $P_1,\dots, P_m\in R$ to be {\em $\d$-independent at $y$}. By \cite[Lemma~5.4.7]{ADH}, if $P_1,\dots, P_m\in R$ are strongly $\d$-independent at $y$, then they are $\d$-independent at $y$ (but
the converse may fail).
Below we show that if $P_1,\dots, P_m\in \frak{p}$ are $\d$-independent at $y$, then $m\le n-d$. 

\medskip\noindent
The notion of $\d$-independence at $y$
is more intrinsic and more flexible than that of strong $\d$-independence at $y$. To discuss the former in more detail, we need some terminology from ~\cite{ADH}. Let $A$ be a commutative ring, $\frak{p}$ a prime ideal of $A$, and $M$ an $A$-module; then a family $(f_i)$ of elements of $M$ is said to be {\em independent at $\frak{p}$\/}
if the family $(f_i+\frak{p}M)$ of elements of the $A/\frak{p}$-module $M/\frak{p}M$ is linearly independent. Next, let
$A$ also be a differential ring extension of $K$. 
Then the $K$-algebra $A$ yields the $A$-module~$\Omega_{A|K}$ of 
K\"ahler differentials with the (universal) $K$-derivation 
$$a\mapsto \d a\ =\ \d_{A|K} a\ :\ A \to \Omega_{A|K}.$$
Following Johnson~\cite{JJ} we make this $A$-module compatibly into an $A[\der]$-module by $\der (\d a):=\d \der a$ for $a\in A$; a family of elements of
$\Omega_{A|K}$ is said to be {\em $\d$-independent\/} if this family
is linearly independent in $\Omega_{A|K}$ viewed as an $A[\der]$-module. This means for $a_1,\dots, a_m\in A$: the
differentials $\d a_1,\dots, \d a_m\in \Omega_{A|K}$ are $\d$-independent iff the family $\big(\!\d a_i^{(r)}\big)$ ($i=1,\dots,m,\  r=0,1,2,\dots$) is linearly independent in the $A$-module $\Omega_{A|K}$; given also a prime ideal $\frak{p}$ of $A$ we say that $\d a_1,\dots,\d a_m$
are {\em $\d$-independent at $\frak{p}$\/} if the family $(\d a_i^{(r)})$ is independent at $\frak{p}$ in the $A$-module $\Omega_{A|K}$.

Returning to the differential ring extensions $R$ and $F=K\<y\>$ of $K$, the $R[\der]$-module 
$\Omega_{R|K}$ is free on $\d Y_1,\dots, \d Y_n$, by \cite[Lemma 1.8.11]{ADH}. The $F[\der]$-module~$\Omega_{F|K}$
is generated by $\d y_1,\dots, \d y_n$, as shown in \cite[Section 5.9]{ADH}. In \cite[Section 5.3]{ADH} we assign to every finitely generated
$F[\der]$-module $M$ a number $\text{rank}(M)\in \N$, and we have 
$\text{rank}(\Omega_{F|K})=d$ by \cite[Corollary 5.9.3]{ADH}.

The differential ring morphism $P\mapsto P(y)\colon R \to F$
is the identity on $K$, and makes $F\otimes_{R} \Omega_{R|K}$
into an $F[\der]$-module as explained in \cite[Section 5.9]{ADH}.
Note that the kernel of the above differential ring morphism $R \to F$ is the differential prime ideal
$\frak{p}=\{P\in R:\ P(y)=0\}$ of $R$.

\begin{lemma} Suppose  $P_1,\dots,P_m\in\frak p$ are $\d$-independent at $y$. Then
$m\le n-d$.
\end{lemma}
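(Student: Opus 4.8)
The plan is to place both hypothesis and conclusion inside a short exact sequence of finitely generated $F[\der]$-modules and then read off the bound from additivity of rank (a property of $\operatorname{rank}$ from \cite[Section 5.3]{ADH}). The differential ring morphism $P\mapsto P(y)\colon R\to F$ induces a morphism of $F[\der]$-modules $\psi\colon F\otimes_R\Omega_{R|K}\to\Omega_{F|K}$ determined by $1\otimes\d Y_j\mapsto\d y_j$, and more precisely by $\psi\big(1\otimes\d_{R|K}P\big)=\d_{F|K}\big(P(y)\big)$ for $P\in R$. Since $\Omega_{R|K}$ is the free $R[\der]$-module on $\d Y_1,\dots,\d Y_n$, the base change $F\otimes_R\Omega_{R|K}$ is the free $F[\der]$-module on $1\otimes\d Y_1,\dots,1\otimes\d Y_n$, hence of rank $n$; and $\psi$ is surjective because $\d y_1,\dots,\d y_n$ generate $\Omega_{F|K}$, which has rank $d$ by \cite[Corollary 5.9.3]{ADH}.

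Set $N:=\ker\psi$. As $F[\der]$ is Noetherian, $N$ is finitely generated, and additivity of rank on the short exact sequence $0\to N\to F\otimes_R\Omega_{R|K}\xrightarrow{\psi}\Omega_{F|K}\to 0$ yields $\operatorname{rank}(N)=n-d$. First I would note that each $1\otimes\d P_i$ lies in $N$: indeed $P_i\in\frak p$ means $P_i(y)=0$, so $\psi\big(1\otimes\d P_i\big)=\d\big(P_i(y)\big)=0$.

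Next I would translate the independence hypothesis into independence inside $F\otimes_R\Omega_{R|K}$. By definition, $\d$-independence of $P_1,\dots,P_m$ at $y$ means the family $\big(\d P_i^{(r)}\big)_{i,r}$ is independent at $\frak p$ in $\Omega_{R|K}$, i.e.\ linearly independent over the domain $R/\frak p$ in $\Omega_{R|K}/\frak p\,\Omega_{R|K}$. Via the identification $F\otimes_R\Omega_{R|K}\cong F\otimes_{R/\frak p}\big(\Omega_{R|K}/\frak p\,\Omega_{R|K}\big)$ together with $F=\Frac(R/\frak p)$, a routine clearing-of-denominators argument upgrades this to $F$-linear independence of $\big(1\otimes\d P_i^{(r)}\big)_{i,r}$; equivalently, $1\otimes\d P_1,\dots,1\otimes\d P_m$ are $F[\der]$-linearly independent in $F\otimes_R\Omega_{R|K}$. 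These $m$ elements therefore generate a free $F[\der]$-submodule of $N$ of rank $m$, whence $m\le\operatorname{rank}(N)=n-d$.

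I expect the main obstacle to be bookkeeping rather than a single hard idea: one must confirm that the $F[\der]$-module structure on $F\otimes_R\Omega_{R|K}$ set up in \cite[Section 5.9]{ADH} really makes $\psi$ an $F[\der]$-morphism and $F\otimes_R\Omega_{R|K}$ free of rank $n$, and that $\operatorname{rank}$ is additive on short exact sequences while bounding the size of any $F[\der]$-linearly independent tuple. The promotion from $R/\frak p$-independence to $F$-independence is then routine once the localization identification is in hand.
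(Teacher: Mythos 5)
Your proposal is correct and follows essentially the same route as the paper's proof: the surjective $F[\der]$-linear map $F\otimes_R\Omega_{R|K}\to\Omega_{F|K}$, the observation that the $1\otimes\d P_i$ lie in its kernel and are $F[\der]$-linearly independent (the paper cites the equivalence $(1)\Leftrightarrow(5)$ and Lemma~5.9.4 of \cite{ADH} where you give the localization argument), freeness of rank $n$ of the source, and $\operatorname{rank}(\Omega_{F|K})=d$ to conclude $m+d\le n$. Your explicit use of the kernel's rank via additivity on the short exact sequence is just a slightly more detailed bookkeeping of the same final step.
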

\begin{proof}
We have a surjective $F[\der]$-linear map
$ F\otimes_{R} \Omega_{R|K} \rightarrow \Omega_{F|K}$
sending~${1\otimes \d P}$ to $\d P(y)$ 
for $P\in R$. Note that $1\otimes \d P_1,\dots, 1\otimes \d P_m$
are in the kernel of this map. By the equivalence $(1)\Leftrightarrow (5)$ and Lemma 5.9.4 in \cite{ADH}, 
the $\d$-independence of $P_1,\dots, P_m$ at $y$ gives that $1\otimes \d P_1,\dots, 1\otimes \d P_m\in F\otimes_R \Omega_{R|K}$ are $F[\der]$-independent (meaning: linearly independent in this $F[\der]$-module). Since the $R[\der]$-module $\Omega_{R|K}$
is free on $dY_1,\dots, dY_n$, the $F[\der]$-module $F\otimes_{R} \Omega_{R|K}$ is free on $1\otimes \d Y_1,\dots, 1\otimes \d Y_n$,
and so has rank $n$. To get $m+d\le n$ it remains to use \cite[Corollary~5.9.3]{ADH} and the fact that $\text{rank}(\Omega_{F|K})=d$. 
\end{proof}

\noindent
Combining the previous two lemmas we conclude:

\begin{cor}\label{dico} The codimension $n-d$ can be characterized as follows:
\begin{align*} n-d\ &=\ \max\{m:\text{some $P_1,\dots,P_m\in\frak p$ are $\d$-independent at $y$}\}\\
&=\ \max\{m:\text{some $P_1,\dots,P_m\in\frak p$ are strongly $\d$-independent at $y$}\}.
\end{align*}
\end{cor}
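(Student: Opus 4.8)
The plan is to read off both equalities directly from the two preceding lemmas together with the implication ``strongly $\d$-independent $\Rightarrow$ $\d$-independent at $y$'' recorded above via \cite[Lemma~5.4.7]{ADH}; once these three ingredients are in hand, the corollary is pure bookkeeping. Write $M_{\mathrm{ind}}$ for the first maximum and $M_{\mathrm{str}}$ for the second, so that the goal is $n-d = M_{\mathrm{str}} = M_{\mathrm{ind}}$.

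I would sandwich the two maxima between $n-d$ on each side. For the lower bound $n-d\le M_{\mathrm{str}}$, the first lemma directly exhibits $P_1,\dots,P_{n-d}\in\frak p$ that are strongly $\d$-independent at $y$, so the maximum $M_{\mathrm{str}}$ is at least $n-d$. For the middle inequality $M_{\mathrm{str}}\le M_{\mathrm{ind}}$, every family in $\frak p$ that is strongly $\d$-independent at $y$ is in particular $\d$-independent at $y$ by \cite[Lemma~5.4.7]{ADH}; hence each competitor counted by $M_{\mathrm{str}}$ is also counted by $M_{\mathrm{ind}}$, whence $M_{\mathrm{str}}\le M_{\mathrm{ind}}$. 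For the upper bound $M_{\mathrm{ind}}\le n-d$, the second lemma asserts precisely that any $P_1,\dots,P_m\in\frak p$ which are $\d$-independent at $y$ satisfy $m\le n-d$. Chaining these yields
\[ n-d\ \le\ M_{\mathrm{str}}\ \le\ M_{\mathrm{ind}}\ \le\ n-d, \]
so all three quantities coincide, which is the assertion of the corollary.

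The real content lies entirely in the two lemmas being combined, not in this squeeze, so there is no genuine obstacle at the level of the corollary itself. The lower bound rests on the explicit construction in the first lemma, where a differential transcendence base is used to produce minimal annihilators whose Jacobian matrix is diagonal with nonzero determinant. The upper bound is the subtler half: it runs through the K\"ahler-differential formalism of \cite[Section~5.9]{ADH}, the surjection $F\otimes_R\Omega_{R|K}\twoheadrightarrow\Omega_{F|K}$, and the rank identity $\mathrm{rank}(\Omega_{F|K})=d$. With those granted, the corollary is immediate.
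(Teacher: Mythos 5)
Your squeeze $n-d\le M_{\mathrm{str}}\le M_{\mathrm{ind}}\le n-d$ is exactly how the paper derives the corollary: it simply combines the two preceding lemmas with the fact that strong $\d$-independence implies $\d$-independence at $y$. Correct and essentially identical to the paper's argument.
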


\noindent
This yields a strengthening of Theorem~5.9.1 and its Corollary~5.9.6 in \cite{ADH}:

\begin{cor}
The following are equivalent:
\begin{enumerate}
\item[(i)] $y_1,\dots,y_n$ are $\d$-algebraic over $K$;
\item[(ii)] there exist $P_1,\dots,P_n\in \frak{p}$ that are $\d$-in\-de\-pen\-dent at $y$;
\item[(iii)] there exist $P_1,\dots,P_n\in \frak{p}$ that are are strongly $\d$-in\-de\-pen\-dent at $y$.
\end{enumerate}
\end{cor}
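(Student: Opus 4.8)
The plan is to derive this final corollary directly from the preceding results, with essentially no new work. The statement asserts the equivalence of three conditions: that $y_1,\dots,y_n$ are $\d$-algebraic over $K$ (i.e.\ $d=0$, where $d$ is the differential transcendence degree of $F=K\<y\>$ over $K$), that there exist $n$ many $P_i\in\frak p$ that are $\d$-independent at $y$, and the same with ``strongly $\d$-independent'' in place of ``$\d$-independent.'' The central observation is that $y_1,\dots,y_n$ being $\d$-algebraic over $K$ is exactly the assertion $d=0$, which is equivalent to $n-d=n$, the codimension being maximal.

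First I would invoke Corollary~\ref{dico}, which characterizes the codimension $n-d$ simultaneously as the maximum $m$ for which some $P_1,\dots,P_m\in\frak p$ are $\d$-independent at $y$, and as the maximum $m$ for which some $P_1,\dots,P_m\in\frak p$ are strongly $\d$-independent at $y$. Thus the existence of $n$ many $P_i\in\frak p$ that are (strongly) $\d$-independent at $y$ is equivalent to $n-d\ge n$, which --- since $n-d\le n$ always --- is equivalent to $n-d=n$, i.e.\ $d=0$. This immediately yields $\text{(ii)}\Leftrightarrow(d=0)$ and $\text{(iii)}\Leftrightarrow(d=0)$, hence all three conditions are equivalent once we note $\text{(i)}\Leftrightarrow(d=0)$.

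It remains only to record that $\text{(i)}$ is literally the statement $d=0$: the differential transcendence degree $d$ of $K\<y\>$ over $K$ is the size of a maximal $\d$-algebraically independent subtuple of $y_1,\dots,y_n$ over $K$, so $d=0$ exactly when no $y_i$ is $\d$-transcendental over $K$, that is, when every $y_i$ --- equivalently all of $y_1,\dots,y_n$ --- is $\d$-algebraic over $K$. This is purely a matter of unwinding the definition of $d$ given at the start of the section.

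I do not anticipate any genuine obstacle here; the corollary is a specialization of Corollary~\ref{dico} to the extreme value $m=n$, combined with the elementary identity $(\text{$y$ is $\d$-algebraic over }K)\Leftrightarrow d=0$. The only point requiring the slightest care is the trivial inequality $n-d\le n$ (equivalently $d\ge 0$), which guarantees that ``some $n$ elements of $\frak p$ are $\d$-independent at $y$'' forces the maximum in Corollary~\ref{dico} to equal $n$ rather than merely being bounded below by it; but this is immediate. Accordingly the proof is short, amounting to citing Corollary~\ref{dico} and substituting $d=0$.
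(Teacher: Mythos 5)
Your proposal is correct and matches the paper's intent exactly: the paper states this corollary as an immediate consequence of Corollary~\ref{dico}, and your argument is precisely the specialization of that corollary to the extreme case $m=n$, together with the observation that (i) is the statement $d=0$. Nothing further is needed.
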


\noindent
To formulate the above in terms of sets $S\subseteq K^n$ we recall that the {\em Kolchin topology on $K^n$} (called
the {\em differential-Zariski topology on $K^n$} in \cite{vdD89}) is the topology on $K^n$ whose closed sets are the sets
$$\big\{y\in K^n:\  P_1(y)=\cdots= P_m(y)=0\big\}\quad (P_1,\dots, P_m\in K\{Y\}).$$
This is a noetherian topology, and so a Kolchin closed subset of
$K^n$ is the union of its finitely many irreducible components.   For $S\subseteq K^n$ we let $S^{\Ko}$ be its Kolchin closure in $K^n$ with respect to the Kolchin topology. Note that
$\dim S = \dim S^{\Ko}$, since for all $P\in K\{Y\}$ we have:
if $P=0$ on $S$ (that is, $P(y)=0$ for all $y\in S$), then 
$P=0$ on $S^{\Ko}$. 

 Suppose $S^{\Ko}$ is irreducible.
A {\bf tuple of $m$ independent re\-la\-tions on $S$} is defined to be a tuple $(P_1,\dots, P_m)\in K\{Y\}^m$  such that
\begin{enumerate}
\item $P_1(y)=\cdots = P_m(y)=0$ for all $y\in S$;
\item $P_1,\dots,P_m$ are $\d$-independent at some $y\in S$.
\end{enumerate}
Similarly we define a {\bf tuple of $m$ strongly independent relations on $S$,} by replacing 
``$\d$-independent'' in (2) by ``strongly $\d$-independent''. Every tuple of strongly independent relations on $S$ is a tuple of independent relations on $S$.
Since~$S^{\Ko}$ is irreducible, 
$$\frak{p}:=\big\{P\in K\{Y\}:\ \text{$P=0$ on $S$}\big\}$$
is  a differential prime ideal of $K\{Y\}$. Letting $K\{y\}=K\{Y\}/\frak{p}$ be the 
corresponding differential $K$-algebra (an integral domain) with $y=(y_1,\dots,y_n)$, $y_i=Y_i+\frak{p}$, for $P\in K\{Y\}$ we have $P(y)=0$ iff $P=0$ on $S$.
So the considerations above applied to $y$ yield for $d:=\dim S$
and irreducible $S^{\Ko}$:

\begin{cor} There is a tuple of $m$ strongly independent relations on $S$ for $m=n-d$, but there is no tuple of $m$ independent relations on $S$ for $m>n-d$. 
\end{cor}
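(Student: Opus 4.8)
The plan is to deduce this from the two preceding lemmas (equivalently, from Corollary~\ref{dico}) applied to the generic point $y=(y_1,\dots,y_n)$ of the irreducible set $S^{\Ko}$, where $y_i=Y_i+\frak p$ in $K\{y\}=K\{Y\}/\frak p$ and $F=K\<y\>$. The one genuinely new ingredient needed before those lemmas apply is the identification of the two meanings of $d$: the $d:=\dim S$ of this corollary must be shown to equal the differential transcendence degree of $F$ over $K$ used in the lemmas. First I would check that for $P_1,\dots,P_m\in K\{Y\}$ the polynomials are $\d$-algebraically dependent on $S$ if and only if $P_1(y),\dots,P_m(y)$ are $\d$-algebraically dependent over $K$: given a witness $G\in K\{X_1,\dots,X_m\}$, the $\d$-polynomial $Q(Y):=G\big(P_1(Y),\dots,P_m(Y)\big)$ vanishes on $S$ exactly when $Q\in\frak p$, i.e.\ when $Q(y)=0$, using the stated equivalence $P(y)=0\Leftrightarrow P=0$ on $S$. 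Taking suprema over $m$ and noting that a differential transcendence basis of $F/K$ can be chosen among $y_1,\dots,y_n$ (hence among elements of the form $P(y)$), this gives $\dim S$ equal to the differential transcendence degree of $F$ over $K$, so that the codimension in the lemmas is indeed $n-\dim S$.

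Granting this, the existence of a tuple of $n-d$ strongly independent relations on $S$ falls out of the first lemma together with a transfer from the generic point to a point of $S$. The first lemma produces $P_1,\dots,P_{n-d}\in\frak p$ that are strongly $\d$-independent at $y$; condition (1) in the definition of a tuple of relations holds because $P_i\in\frak p$ means $P_i=0$ on $S$. For condition (2) I would unwind strong $\d$-independence at $y$: it asserts that for suitable $\vec r$ some $(n-d)\times(n-d)$ minor $D$ of the matrix $\big(\partial P_i/\partial Y_j^{(r_j)}(y)\big)$ is nonzero in $F$, i.e.\ $D\notin\frak p$, i.e.\ $D$ does not vanish identically on $S$. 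Choosing $a\in S$ with $D(a)\ne 0$, the corresponding minor of the matrix evaluated at $a$ equals $D(a)\ne 0$, so that matrix has rank $n-d$ and $P_1,\dots,P_{n-d}$ are strongly $\d$-independent at $a\in S$. This yields a tuple of $n-d$ strongly independent relations on $S$, which is a fortiori a tuple of independent relations on $S$.

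For the nonexistence statement, let $(P_1,\dots,P_m)$ be a tuple of $m$ independent relations on $S$, so that all $P_i\in\frak p$ and $P_1,\dots,P_m$ are $\d$-independent at some $a\in S$. The crux is that $\d$-independence at the special point $a$ forces $\d$-independence at the generic point $y$; granting this, the second lemma yields $m\le n-d$, as desired. I would prove the implication by contraposition. Writing $1\otimes\d P_i=\sum_{j} L_{ij}\,(1\otimes\d Y_j)$ with $L_{ij}\in K\{y\}[\der]$ in the free $K\{y\}[\der]$-module $K\{y\}\otimes_R\Omega_{R|K}$, the $\d$-dependence of $P_1,\dots,P_m$ at $y$ is the statement that the rows of $(L_{ij})$ are $F[\der]$-linearly dependent, a closed condition cut out by the vanishing of the maximal minors of this matrix over the Ore ring $F[\der]$. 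Since evaluation at $a$ is a differential $K$-algebra homomorphism $K\{y\}\to K$ carrying $(L_{ij})$ to the corresponding matrix at $a$, dependence at $y$ propagates to dependence at $a$. Combining the two parts gives the stated maximum $n-d$.

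The main obstacle is exactly this last transfer for the weaker notion of $\d$-independence. Unlike the strong case it is not witnessed by a single nonvanishing minor, and one must be careful: clearing denominators in a generic dependence relation and then specializing at $a$ can annihilate it, so a direct manipulation of relations is delicate. I expect to route the argument through the rank theory of \cite[Sections~5.3--5.4]{ADH} and the equivalences of \cite[Lemma~5.9.4]{ADH} already invoked in the proof of the second lemma, which recast $\d$-(in)dependence as the rank of a finitely generated $F[\der]$-module and should supply the required lower semicontinuity of rank under the differential specialization $K\{y\}\to K$. By contrast, the dimension identification and the strong-independence transfer are routine.
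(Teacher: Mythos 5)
Your overall strategy is the paper's own: the paper's entire proof of this corollary is the single sentence that ``the considerations above applied to $y$'' (the generic point $y_i=Y_i+\frak p$ of the irreducible set $S^{\Ko}$) yield the statement, so the three transfers you isolate --- the identification of $d=\dim S$ with the differential transcendence degree of $K\<y\>$ over $K$, the passage from strong $\d$-independence at the generic point to strong $\d$-independence at an actual point of $S$, and the passage from $\d$-independence at a point of $S$ back to $\d$-independence at the generic point --- are exactly what is left implicit there. Your treatment of the first two is correct: the dimension identification is the routine computation you describe, and for strong independence the nonvanishing at $y$ of a single $m\times m$ minor $D\in K\{Y\}$ of the matrix of separants means $D\notin\frak p$, hence $D(a)\ne 0$ for some $a\in S$.

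The step whose justification would fail as written is the third transfer. You propose to detect $\d$-dependence at $y$ via ``the vanishing of the maximal minors of $(L_{ij})$ over the Ore ring $F[\der]$''; there is no usable minor/determinant calculus over the noncommutative ring $F[\der]$, and $\d$-independence at a point is not defined through that matrix in the first place. Fortunately the fact you need is true and elementary, because the relevant definition is commutative: $P_1,\dots,P_m$ are $\d$-independent at a tuple $z$ iff the family $\big(\d P_i^{(r)}\big)_{i,r}$ is independent at the prime $\frak p_z=\{P:P(z)=0\}$ in the $R$-module $\Omega_{R|K}$, which is free on the $\d Y_j^{(s)}$. For $a\in S$ we have $\frak p\subseteq\frak p_a$. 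Independence of the whole family at a prime means independence of every finite subfamily, and for a finite subfamily of a free module over a domain, independence at a prime $\frak q$ is the nonvanishing modulo $\frak q$ of some maximal minor of the ordinary coordinate matrix over $R$; a minor lying outside $\frak p_a$ lies a fortiori outside $\frak p$. Hence $\d$-independence at $a$ implies $\d$-independence at the generic point $y$, and the second lemma gives $m\le n-d$. No semicontinuity of $F[\der]$-module rank under specialization is needed, and no denominators need to be cleared.
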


\section{The Case of $\T$}\label{T}

\noindent
The paper~\cite{vdD89} contains an axiomatic framework for
a reasonable notion of dimension for the definable sets in suitable
model-theoretic structures with a topology. In this section we show that as a consequence of~\cite[Chapter~16]{ADH} the relevant axioms are satisfied for $\T$ with its order topology.

To state the necessary facts about $\T$ from \cite{ADH} we recall from that book that an {\it $H$-field}\/ is an ordered differential field $K$ with constant field~$C$ such that: \begin{enumerate}
\item[(H1)] $\der(a)>0$ for all $a\in K$ with $a>C$;
\item[(H2)] $\mathcal{O}=C+\smallo$, where $\mathcal{O}$ is the
convex hull of $C$ in the ordered field $K$, and $\smallo$ is the maximal ideal of the valuation ring $\mathcal{O}$.
\end{enumerate}
Let $K$ be an $H$-field, and let $\mathcal{O}$ and $\smallo$ be as in (H2). Thus $K$ is a valued field with valuation ring $\mathcal{O}$. The valuation topology on $K$ equals its order topology if $C\ne K$. We consider $K$ as an 
$\mathcal{L}$-structure, where 
$$\mathcal{L}\ :=\ \{\,0,\,1,\, {+},\, {-},\, {\times},\, \der,\, {P},\, {\preceq}\,\}$$ is the language of ordered valued differential fields. The symbols $0,\,1,\, {+},\, {-},\, {\times},\, \der$ are interpreted as usual in $K$, and 
$P$ and $\preceq$ encode the ordering and the valuation: 
$$P(a)\ \Longleftrightarrow\ a>0, \qquad a\preceq b\ \Longleftrightarrow a\in \mathcal{O} b\ \qquad(a,b\in K).$$ Given $a\in K$ we also write $a'$ instead of $\der(a)$, and we set $a^\dagger:= a'/a$ for $a\ne 0$. 

The real closed (and thus ordered) differential field $\T$ is an $H$-field, and in \cite{ADH} we showed that it is a model of a model-complete $\mathcal{L}$-theory $T^{\text{nl}}$. The models of the latter are exactly the $H$-fields $K$ satisfying the following (first-order) conditions: \begin{enumerate}
\item $K$ is Liouville closed;
\item $K$ is $\upo$-free;
\item $K$ is newtonian.
\end{enumerate} 
(An $H$-field $K$ is said to be {\it Liouville closed}\/ if it is real closed and for all $a\in K$ there exists $b\in K$ with $a=b'$ and also a $b\in K^\times$ such that $a=b^\dagger$; for the definition of ``$\upo$-free'' and ``newtonian'' we refer to the
Introduction of \cite{ADH}.) Since ``Liouville closed'' includes ``real closed'', the ordering (and thus the valuation ring) of any model of $T^{\text{nl}}$ is definable
in the underlying differential field of the model. 
We shall prove the dimension results in this paper for all models
of $T^{\text{nl}}$: working in this generality
plays a role even when our main interest is in $\T$. So in the rest of this section we fix an arbitrary model $K$ of $T^{\text{nl}}$, that is, {\em $K$ is a Liouville closed $\upo$-free newtonian $H$-field}. Lemma~\ref{dim2}(ii)
and \cite[Corollary~16.6.4]{ADH} yield:  

\begin{cor}\label{defdimn} For definable $S\subseteq K^n$,
$$ \dim S=n\ \Longleftrightarrow\   \text{$S$ has nonempty interior in $K^n$.}$$
\end{cor}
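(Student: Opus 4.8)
The plan is to reduce the stated equivalence to the dichotomy already packaged in Lemma~\ref{dim2}(ii) together with the topological input from \cite[Corollary~16.6.4]{ADH}. By Lemma~\ref{dim2}(ii) we have, for any $S\subseteq K^n$,
$$\dim S<n\ \Longleftrightarrow\ S\subseteq\{y\in K^n:\ P(y)=0\}\ \text{ for some nonzero }P\in K\{Y\}.$$
Negating both sides, it therefore suffices to prove that for \emph{definable} $S$ the condition ``$S$ is contained in the zero set of some nonzero $P\in K\{Y\}$'' is equivalent to ``$S$ has empty interior in $K^n$''. I would establish the two directions separately.

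For the direction starting from ``empty interior'', I would note that this is precisely the substantive content supplied by \cite[Corollary~16.6.4]{ADH}: a definable subset of $K^n$ with empty interior lies in the zero set of a nonzero differential polynomial. Here definability of $S$ is essential — for an arbitrary $S$ the implication fails, as a small non-definable set of empty interior need lie in no proper differential-algebraic hypersurface — and this is the step where the full strength of the hypotheses on $K$ (Liouville closed, $\upo$-free, newtonian) enters, via Chapter~16 of \cite{ADH}.

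For the converse I would argue directly, equivalently establishing the forward implication ``$S$ has nonempty interior $\Rightarrow\ \dim S=n$''. If $S$ has nonempty interior it contains a box $U_1\times\cdots\times U_n$ with each $U_i\subseteq K$ a nonempty open interval. By Lemma~\ref{dim3} the dimension of such a box equals $\sum_i\dim U_i$, so it remains to show $\dim U_i=1$ for each $i$, i.e.\ (by the $n=1$ case of Lemma~\ref{dim2}(ii)) that an open interval is not contained in the zero set of any nonzero $P\in K\{Y_1\}$. This is the one-variable instance of the statement that differential-polynomial zero sets are nowhere dense, which again falls out of \cite[Corollary~16.6.4]{ADH}, since the definable set $\{a\in K:\ P(a)=0\}$ with $P\ne 0$ has empty interior. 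Hence the box has dimension $n$, whence $\dim S\ge n$ and so $\dim S=n$.

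The main obstacle is the empty-interior direction: the elementary-looking claim that differential-algebraic zero sets are nowhere dense is false in general $H$-fields and holds here only because $K$ models $T^{\text{nl}}$. All the real work is therefore hidden in \cite[Corollary~16.6.4]{ADH}, and the present corollary amounts to repackaging that result through the purely algebraic translation provided by Lemma~\ref{dim2}(ii).
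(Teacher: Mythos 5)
Your proposal is correct and follows essentially the same route as the paper, which proves the corollary by simply combining Lemma~\ref{dim2}(ii) with \cite[Corollary~16.6.4]{ADH}. Your extra detour through boxes and Lemma~\ref{dim3} for the easy direction is harmless but unnecessary, since that direction also falls straight out of the same two ingredients (a nonzero $P\in K\{Y\}$ cannot vanish on a definable set with nonempty interior).
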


\noindent
To avoid confusion with the  Kolchin topology, we consider $K$ here and 
below as equipped with its order topology, and $K^n$ with the corresponding 
product topology. 
Combining the previous corollary with (iv)--(vi) in Lemma~\ref{dim2} yields a topological characterization of dimension:

\begin{cor}
For nonempty definable $S\subseteq K^n$, 
$\dim S$ is the largest $m\le n$ such that for some permutation $\sigma$ of $\{1,\dots,n\}$, the subset
$\pi_m(S^\sigma)$ of $K^m$ has nonempty interior;
here $\pi_m(x_1,\dots,x_n):=(x_1,\dots,x_m)$ for $(x_1,\dots,x_n)\in K^n$.
\end{cor}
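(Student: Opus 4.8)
The plan is to set $d:=\dim S$ and let $d^*$ denote the quantity described on the right-hand side, namely the largest $m\le n$ for which some permutation $\sigma$ makes $\pi_m(S^\sigma)$ have nonempty interior in $K^m$; I will then prove $d^*\le d$ and $d\le d^*$ separately, using only Corollary~\ref{defdimn} together with parts (iv)--(vi) of Lemma~\ref{dim2}. The key observation throughout is that Corollary~\ref{defdimn} holds at every arity (with $m$ in place of $n$), and that for any permutation $\sigma$ the reindexed set $S^\sigma$ is definable while the coordinate projection $\pi_m(S^\sigma)$ of a definable set is again definable.

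For the inequality $d^*\le d$, I would take any $m$ and $\sigma$ witnessing the definition of $d^*$, so that $\pi_m(S^\sigma)\subseteq K^m$ has nonempty interior. Since this set is definable, Corollary~\ref{defdimn} applied at arity $m$ gives $\dim\pi_m(S^\sigma)=m$. On the other hand, Lemma~\ref{dim2}(v) yields $\dim\pi_m(S^\sigma)\le\dim S^\sigma$, and Lemma~\ref{dim2}(iv) gives $\dim S^\sigma=\dim S=d$. Hence $m\le d$, and taking the maximum over all such $m$ gives $d^*\le d$.

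For the reverse inequality $d\le d^*$, I would invoke Lemma~\ref{dim2}(vi): since $\dim S=d$, there is a permutation $\sigma$ with $\dim\pi_d(S^\sigma)=d$. As $\pi_d(S^\sigma)$ is a definable subset of $K^d$ of dimension $d$, Corollary~\ref{defdimn} applied at arity $d$ shows it has nonempty interior in $K^d$. Thus $m=d$ is among the values witnessing the definition of $d^*$, so $d^*\ge d$. Combining the two inequalities gives $\dim S=d=d^*$, as required.

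I do not expect any genuine obstacle here: the statement is essentially a mechanical repackaging of Corollary~\ref{defdimn} together with the good behavior of dimension under reindexing and projection recorded in Lemma~\ref{dim2}. The only points that need a little care are confirming that $\pi_m(S^\sigma)$ is definable, so that Corollary~\ref{defdimn} applies to it, and handling the extremal case $m=0$ correctly: for nonempty $S$ with $d=0$, the trivial projection $\pi_0(S^\sigma)$ is the whole one-point space $K^0$, which has (vacuously) nonempty interior, while no $m\ge 1$ can qualify, since that would force $\dim\pi_m(S^\sigma)=m\ge 1>\dim S$.
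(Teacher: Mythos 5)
Your argument is correct and is exactly the route the paper intends: the paper gives no separate proof but states that the corollary follows by ``combining the previous corollary with (iv)--(vi) in Lemma~\ref{dim2},'' which is precisely your two inequalities. Your extra care about definability of $\pi_m(S^\sigma)$ and the $m=0$ case is sound but not a departure from the paper's approach.
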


\noindent
In particular, if $S\subseteq K^n$ is semialgebraic in the sense of the 
real closed field $K$, then $\dim S$ agrees with the usual semialgebraic dimension of $S$ over $K$.

\medskip
\noindent
To get that $K$ is $\d$-bounded, we introduce two
key subsets of $K$, namely $\Upl(K)$ and~$\Upo(K)$. They are defined by the following equivalences, for $a\in K$:
\begin{align*}
a\in \Upl(K)\ &\Longleftrightarrow\ a=-y^{\dagger\dagger} \text{ for some $y\succ 1$  in $K$,}\\
a\in \Upo(K)\  &\Longleftrightarrow\ 4y''+ay=0 \text{ for some $y\in K^\times$.}
\end{align*}
To describe these sets more concretely for $K=\T$, set $\ell_0:= x$ and
$\ell_{n+1}:= \log \ell_n$, so $\ell_n$ is the $n$th iterated logarithm of 
$x$ in $\T$. 
Then for $f\in \T$,
\begin{align*}
  f \in \Upl (\T)\ & \Longleftrightarrow\ f\ <\ \frac{1}{\ell_0} + \frac{1}{\ell_0
  \ell_1} + \cdots + \frac{1}{\ell_0 \ell_1 \cdots \ell_n}\quad  \text{ for some $n$,}\\
  f \in \Upo (\T)\ & \Longleftrightarrow\ f\ <\ \frac{1}{\ell_0^2} +
  \frac{1}{\ell_0^2 \ell_1^2} + \cdots + \frac{1}{\ell_0^2 \ell_1^2 \cdots
  \ell_n^2}\quad  \text{ for some $n$,}
\end{align*}
by \cite[Example after~11.8.19; Proposition~11.8.20 and Corollary~11.8.21]{ADH}.
The set~$\Upl(K)$ is closed downward in $K$: 
if $a\in K$ and $a<b\in \Upl(K)$, then $a\in \Upl(K)$; and $\Upl(K)$ has an upper bound in $K$ but no least upper bound; these properties also hold for $\Upo(K)$ instead of $\Upl(K)$.
From Chapter~16 of~\cite{ADH} we need that
$T^{\text{nl}}$ has a certain extension by definitions
$T^{\text{nl}}_{\Upl\Upo}$ that has QE:
the language of $T^{\text{nl}}_{\Upl\Upo}$ is
$\mathcal{L}$ augmented by two extra binary relation symbols
$R_{\Upl}$ and $R_{\Upo}$, to be interpreted in~$K$ according to
$$ aR_{\Upl} b\Longleftrightarrow a\in \Upl(K)b, \qquad aR_{\Upo} b\Longleftrightarrow a\in \Upo(K)b.$$
(The language of $T^{\text{nl}}_{\Upl\Upo}$ in \cite[Chapter~16]{ADH} is slightly different, but yields the same notion of what is quantifier-free definable. The version here is more convenient for our purpose.)
Using that 
$\Upl(K)$ and $\Upo(K)$ are open-and-closed in $K$, it is routine 
(but tedious) to check that $K$ satisfies the differential analogue of \cite[2.15]{vdD89} that is discussed on p.~203 of that paper in a general setting. Thus:

\begin{cor} $K$ is $\d$-bounded; in particular, $\T$ is $\d$-bounded.
\end{cor}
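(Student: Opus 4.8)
The plan is to verify that $K$ satisfies the differential analogue of the boundedness axiom \cite[2.15]{vdD89}, which, by the remark following the definition of $\d$-boundedness, is what we need. By definition, we must show that for every definable $S\subseteq K^{n+1}$ there are $P_1,\dots,P_m\in K\{Y,Z\}$ such that whenever $y\in K^n$ has $\dim S(y)=0$, the section $S(y)$ lies in the zero set of some $P_i(y,Z)\ne 0$. The key leverage is quantifier elimination for $T^{\text{nl}}_{\Upl\Upo}$: since $K$ is a model of this QE-theory, every definable $S\subseteq K^{n+1}$ is defined by a quantifier-free $\mathcal{L}_{\Upl\Upo}$-formula $\phi(y,z)$, which is a Boolean combination of atomic formulas in the variables $y,z$.

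First I would reduce to understanding the atomic formulas. The atomic $\mathcal{L}_{\Upl\Upo}$-formulas are of the forms $P(\tau)$, $\tau\preceq\tau'$, $\tau\, R_{\Upl}\,\tau'$, $\tau\, R_{\Upo}\,\tau'$, and $\tau=\tau'$, where $\tau,\tau'$ range over differential-polynomial terms in $(Y,Z)$ over $K$. Fixing $y\in K^n$, each such term becomes a $\d$-polynomial in $Z$ over $K$, say $Q(y,Z)\in K\{Z\}$. Using that $P(a)\Leftrightarrow a>0$ and that $\preceq$, $R_{\Upl}$, $R_{\Upo}$ encode the valuation and the open-and-closed sets $\Upl(K),\Upo(K)$, I would argue that each atomic condition on $z$ partitions $K$ (for fixed $y$) along the zero set of finitely many $\d$-polynomials $Q_j(y,Z)$ together with conditions of the form $Q(y,z)\in\Upl(K)\cdot Q'(y,z)$ and their valuation-theoretic analogues. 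The crucial topological input is that $\Upl(K)$ and $\Upo(K)$ are open-and-closed in the order topology (as recorded just before the corollary): on the complement of the finitely many $\d$-algebraic hypersurfaces $\{Q_j(y,z)=0\}$, each atomic formula defines an open-and-closed subset of $K$, because inequalities, valuation inequalities, and the $R_{\Upl},R_{\Upo}$-conditions all pull back open-and-closed sets under the continuous map $z\mapsto$ (value of the relevant terms).

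The heart of the argument is then the dichotomy for the section $S(y)=\{z\in K:\phi(y,z)\}$. Away from the zero sets of the finitely many $\d$-polynomials $Q_1(y,Z),\dots,Q_N(y,Z)$ extracted from the atomic subformulas of $\phi$, the set $S(y)$ is a Boolean combination of open-and-closed sets, hence itself open-and-closed in $K$; so $S(y)$ is the union of a relatively open-and-closed subset $U$ of the (open, dense) complement of $\bigcup_j\{Q_j(y,z)=0\}$ together with a subset of that finite union of hypersurfaces. Now suppose $\dim S(y)=0$. Were $U$ nonempty, it would be a nonempty open subset of $K$, forcing $S(y)$ to have nonempty interior, whence $\dim S(y)=1$ by Corollary~\ref{defdimn} (the $n=1$ case), a contradiction. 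Therefore $U=\emptyset$ and $S(y)\subseteq\bigcup_j\{z:Q_j(y,z)=0\}$. The $P_i$ are then taken to be the finitely many $Q_j$ together with enough auxiliary $\d$-polynomials so that, for each $y$, at least one $Q_j(y,Z)$ is a nonzero polynomial whose zero set still covers $S(y)$; one handles the coefficient $\d$-polynomials degenerating to zero by a standard case-split on which leading data of the $Q_j$ vanish, introducing finitely many further $P_i$ for each such case (this is exactly the bookkeeping meant by the word ``tedious'').

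I expect the main obstacle to be precisely this last bookkeeping: organizing the finite list $P_1,\dots,P_m$ \emph{uniformly in $y$} so that for every $y$ with $\dim S(y)=0$ there is a genuinely nonzero $P_i(y,Z)$ covering $S(y)$. The difficulty is that a given $Q_j(y,Z)$ may vanish identically in $Z$ for some $y$, so one cannot use a single polynomial; instead one partitions $K^n$ (definably, hence compatibly with QE) according to which coefficients of the $Q_j$ vanish, and on each piece selects a surviving nonzero $\d$-polynomial. Since $\phi$ is a fixed quantifier-free formula involving only finitely many terms, this case-split is finite, yielding the required finite list. Once the open-and-closed structure of $\Upl(K)$ and $\Upo(K)$ and the $n=1$ instance of Corollary~\ref{defdimn} are in hand, the argument is routine, which is why we only indicate it; the statement that $\T$ is $\d$-bounded then follows by specializing $K$ to $\T$.
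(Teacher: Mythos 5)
Your proposal follows essentially the same route as the paper, which itself only states that one should use quantifier elimination for $T^{\text{nl}}_{\Upl\Upo}$ together with the fact that $\Upl(K)$ and $\Upo(K)$ are open-and-closed to verify, ``routinely but tediously,'' the differential analogue of \cite[2.15]{vdD89}; your write-up is a correct expansion of exactly that verification, including the right dichotomy (a section of dimension $0$ cannot meet the open-and-closed locus off the zero sets, by Corollary~\ref{defdimn}) and the necessary case-split on which $Q_j(y,Z)$ degenerate. No genuinely different ideas are involved.
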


\noindent
Moreover, \cite[p.~203]{vdD89} points out the following consequence (extending Corollary~\ref{defdimn}):

\begin{cor}\label{intko} Every nonempty definable set $S\subseteq K^n$ has nonempty interior in the Kolchin closure $S^{\Ko}$ of $S$ in $K^n$.
\end{cor}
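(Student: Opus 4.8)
The plan is to prove the statement by reducing, in two stages, to the already-established equivalence ``full dimension $\Leftrightarrow$ nonempty interior'' of Corollary~\ref{defdimn}, applied in a space of dimension $d:=\dim S=\dim S^{\Ko}$. First I would pass to a single irreducible component. Since the Kolchin topology is noetherian, $S^{\Ko}=V_1\cup\cdots\cup V_k$ with the $V_i$ irreducible, and for each $i$ the trace $S\cap V_i$ is Kolchin-dense in $V_i$; I fix a component $V:=V_i$ with $\dim V=d$. Using that $\der$—and hence every differential-polynomial map $K^n\to K$—is continuous for the order topology, so that Kolchin-closed sets are order-closed, the set $\bigcup_{j\ne i}V_j$ is order-closed; its complement is order-open and meets $V$ in a nonempty set. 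Thus any order-open subset of $V$ lying in $V\setminus\bigcup_{j\ne i}V_j$ is in fact order-open in all of $S^{\Ko}$, and the task reduces to the case in which $S^{\Ko}=V$ is irreducible of dimension $d$.

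For the irreducible case I would transport the problem to $K^d$. After a coordinate permutation (Lemma~\ref{dim2}(vi)) the projection $\pi:=\pi_d\colon K^n\to K^d$ satisfies $\dim\pi(S)=d$, and the fiber-dimension theorem (Proposition~\ref{Kddim}) then forces the generic fiber of $\pi|_S$ to have dimension $0$. On the smooth, strongly $\d$-independent locus $V_{\mathrm{sm}}$ of $V$, which is Kolchin-dense by Corollary~\ref{dico} and may be taken inside $V\setminus\bigcup_{j\ne i}V_j$, the Jacobian condition coming from strong $\d$-independence lets me invoke an implicit-function / local-section argument for the order topology: over a suitable order-open $U\subseteq\pi(V_{\mathrm{sm}})$ the generically finite cover $\pi\colon V_{\mathrm{sm}}\to U$ splits into finitely many definable, order-continuous sections $s_1,\dots,s_N$, and $S\cap V_{\mathrm{sm}}$ is a union of graphs of the $s_j$ over definable sets $E_j\subseteq U$. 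Since $S$ is Kolchin-dense in $V$ and $V\setminus V_{\mathrm{sm}}$ has smaller dimension, $\dim(S\cap V_{\mathrm{sm}})=d$, so by Lemma~\ref{dim2}(iii) some $E_{j_0}$ has dimension $d$; by Corollary~\ref{defdimn} it contains an order-open box $U^{*}$, and then $s_{j_0}(U^{*})$ is an order-open subset of $V$ contained in $S$. By the first step this is order-open in $S^{\Ko}$, as required.

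The main obstacle is the interplay between the two topologies in play: the Kolchin topology, in which the closure $S^{\Ko}$ and its irreducible components live, and the order topology, in which ``interior'' is measured. Two analytic inputs about $K$ carry this interplay and are the real content of the argument, rather than the dimension bookkeeping: (i) continuity of $\der$ for the order topology, needed so that lower-dimensional Kolchin components can be separated off by an order-open set; and (ii) an order-topological implicit-function theorem at a strongly $\d$-independent point, needed to convert the merely Kolchin-dense presence of $S$ in $V$ into finitely many order-continuous sheets, one of which lies in $S$ over an order-open base. Both hold for $\T$ and for models of $T^{\mathrm{nl}}$ by the analysis of the valuation topology in~\cite{ADH}; granting them, the argument realizes exactly the general deduction indicated in~\cite[p.~203]{vdD89}, whose two hypotheses—$\d$-boundedness and the interior/full-dimension equivalence—have just been verified for $K$.
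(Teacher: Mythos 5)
The decisive step in your argument --- that over a suitable order-open $U$ the projection $\pi\colon V_{\mathrm{sm}}\to U$ is a \emph{generically finite} cover which splits into finitely many definable continuous sections $s_1,\dots,s_N$, so that $S\cap V_{\mathrm{sm}}$ becomes a \emph{finite} union of graphs --- is where the proof breaks, and it breaks for the reason that is the theme of the whole second half of this paper: in a differential field, dimension $0$ does not mean finite. The fiber-dimension theorem only gives that the fibers of $\pi|_V$ have dimension $0$, and such fibers are typically infinite. For instance, $V=\big\{(y_1,y_2)\in K^2:\ y_2'=y_1y_2\big\}$ is Kolchin-irreducible of dimension $1$ with $\pi_1(V)=K$, and \emph{every} fiber is a full $C$-line $Cb$. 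No implicit-function theorem can produce finitely many sheets here: strong $\d$-independence concerns the Jacobian with respect to the highest derivatives $Y_j^{(r_j)}$, so at best it puts the system in normal form as a system of ODEs in $y_{d+1},\dots,y_n$ over the base, whose solution set over each base point is again a ``constant's worth'' of points. With the finite decomposition gone, the appeal to Lemma~\ref{dim2}(iii) (which concerns finite unions) to locate an $E_{j_0}$ of dimension $d$ collapses; and even if one sheet lay inside $S$, its graph would not be open in $V$ without a further uniform-discreteness argument in the fiber direction, which is a Section~\ref{d0d}-type fact proved only later. Your preliminary reductions (to an irreducible component, using that Kolchin-closed sets are order-closed because $\der$ is continuous, and the choice of coordinates making $\dim\pi(S)=d$) are sound; the core of the argument is not.

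For comparison: the paper does not give a self-contained proof but derives the corollary from the general scheme of \cite[p.~203]{vdD89}, whose differential analogue needs exactly the two inputs verified beforehand, namely $\d$-boundedness and Corollary~\ref{defdimn}. The role you want your ``finitely many sections'' to play is played there by $\d$-boundedness itself: a dimension-$0$ fiber is controlled not by finitely many \emph{points} but by membership in the zero set of one of finitely many differential polynomials $P_i(y,Z)$ taken uniformly from a fixed list, and the induction on the number of coordinates runs through that uniform family rather than through a finite covering by graphs. Any repair of your argument has to replace the finite splitting by this $\d$-boundedness data (or by a co-analyzability/fibration-by-$C$ analysis of the fibers), at which point it essentially becomes the cited argument.
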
 

\noindent
(By our earlier convention, the {\em interior\/} here refers to the topology on $S^{\Ko}$ induced by the product topology on $K^n$
that comes from the order topology on $K$.) For nonempty
definable  $S\subseteq K^n$ with closure $\operatorname{cl}(S)$ in $K^n$ we have
$$\dim\!\big(\!\operatorname{cl}(S)\setminus S\big)\  <\  \dim S.$$
This is analogous to  \cite[2.23]{vdD89}, but the proof there doesn't go
through. We intend to show this dimension decrease 
in a follow-up paper.



\section{Dimension $0$ = Discrete}\label{d0d}

\noindent
Let $K$ be a Liouville closed $\upo$-free newtonian $H$-field, with the order topology on~$K$ and the corresponding product topology on each $K^n$. Corollary~16.6.11 in \cite{ADH} and its
proof yields the following equivalences for definable $S\subseteq K$:

\medskip\noindent
$$\dim S =0\ \Longleftrightarrow\ 
 \text{$S$ has empty interior}\ \Longleftrightarrow\  \text{$S$ is discrete.}$$

\medskip\noindent
We now extend part of this to definable subsets of $K^n$. The proof
of one of the directions is rather curious and makes full use of
the resources of \cite{ADH}.

\begin{prop} For definable nonempty $S\subseteq K^n$:
$$ \dim S =0\ \Longleftrightarrow\  \text{$S$ is discrete.}$$
\end{prop}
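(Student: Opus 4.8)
The plan is to prove the two implications by quite different means: $\dim S=0\Rightarrow S$ discrete by a clean induction on $n$ resting on the fiber theorem, and the converse by reducing to a positive-dimensional configuration and then invoking a cardinality argument, which I expect to be the real difficulty.

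For the forward implication I would induct on $n$, the case $n=1$ being the displayed equivalence for definable subsets of $K$ recalled just above the statement. For the step, let $S\subseteq K^{n+1}$ with $\dim S=0$ and let $\pi\colon K^{n+1}\to K^n$ drop the last coordinate. Then $\pi(S)$ is definable with $\dim\pi(S)\le\dim S=0$ by Lemma~\ref{dim2}, so $\pi(S)$ is discrete by the inductive hypothesis. Applying Theorem~\ref{ddim} to the restriction $f=\pi\restrict S$, the identity $\dim f^{-1}(B(i))=i+\dim B(i)\le\dim S=0$ forces $B(i)=\emptyset$ for $i\ge 1$; hence every section $S(y)=\{z:(y,z)\in S\}$ has $\dim S(y)=\dim f^{-1}(y)=0$ (using Lemma~\ref{dim3} for $\dim(\{y\}\times S(y))$) and so is discrete by the case $n=1$. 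A point $(y_0,z_0)\in S$ is then isolated: choose a box $U$ about $y_0$ with $U\cap\pi(S)=\{y_0\}$ and an interval $V$ about $z_0$ with $V\cap S(y_0)=\{z_0\}$; any $(y,z)\in S\cap(U\times V)$ has $y=y_0$ and then $z=z_0$. Thus $S$ is discrete.

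For the converse I would first observe that discreteness of a definable $S$ is expressed by a first-order sentence over the parameters defining $S$, namely $\forall a\in S\,\exists\varepsilon>0\,\forall s\in S(\,\bigwedge_i|s_i-a_i|<\varepsilon\to s=a\,)$, the order being definable; so discreteness is preserved under passing to and from elementary extensions, while $\dim S$ is likewise elementary-invariant (the corollary to Lemma~\ref{dim4}). Hence it suffices to prove the contrapositive ``$\dim S\ge 1\Rightarrow S$ is not discrete'' in any one conveniently saturated $K\models T^{\mathrm{nl}}$. Assuming $\dim S=d\ge 1$, I would use Lemma~\ref{dim2} to permute coordinates so that the projection to the first $d$ coordinates has dimension $d$, hence nonempty interior by Corollary~\ref{defdimn}; its image then contains a box and, after projecting further, an interval $I\subseteq K$. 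The fiber theorem lets me restrict to a full-dimensional part over which all fibers are $0$-dimensional, and real-closedness supplies a definable section $g$ whose graph lies in $S$ over $I$. (Alternatively, Corollary~\ref{intko} hands me a nonempty relatively open piece of $S^{\Ko}$ contained in $S$.)

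The main obstacle is the final step: showing that the graph of $g$ over $I$, i.e.\ a positive-dimensional definable set, cannot be discrete. This is the unusual cardinality argument. If $S$ were discrete, real-closedness would give a definable isolation radius $\varepsilon\colon S\to K^{>0}$, and I would compare the abundance of points that an interval's worth of first coordinates forces into $S$ against the limited supply of ``separating scales'' available, these scales being controlled by the value group and the constant field, whose fine structure in a model of $T^{\mathrm{nl}}$ is exactly what the $\Upl(K)$/$\Upo(K)$ analysis of \cite[Chapter~16]{ADH} governs. Working in a sufficiently saturated model so that the comparison bites, one should locate two distinct graph points lying inside a common isolation box, contradicting discreteness. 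I expect this cardinality/valuation-theoretic comparison to be the crux, and the point at which the full strength of \cite{ADH} becomes unavoidable.
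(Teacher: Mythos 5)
Your forward direction is correct, though more elaborate than necessary: since $\dim S=0$ gives $\dim\pi_i(S)=0$ for every coordinate projection $\pi_i\colon K^n\to K$, each $\pi_i(S)$ is discrete by the one-variable case recalled before the statement, and $S$ is then a subset of the discrete product $\pi_1(S)\times\cdots\times\pi_n(S)$; no induction on $n$ and no appeal to Theorem~\ref{ddim} are needed.

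The converse, however, has a genuine gap, and it sits exactly where you flag ``the real difficulty.'' There are two problems. First, the intermediate reduction to a definable section $g$ over an interval is not available as stated: the fibers of the projection are $0$-dimensional, hence discrete, definable subsets of $K$, and no definable choice function for such families is known ($K$ is not o-minimal; indeed even the possible order types of discrete definable subsets of $K$ are raised as an open question at the end of Section~\ref{sec:pc}). Second, and more seriously, the crux --- that a positive-dimensional definable set cannot be discrete --- is left as a hope about comparing ``separating scales'' in a sufficiently saturated model, with no identified mechanism; saturation in fact works against you, since in a highly saturated model a discrete set can have arbitrarily large cardinality and the topology has no countable base. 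The actual argument goes in the opposite direction: pass to a \emph{countable} elementary substructure over which $S$ is defined, then to its completion $K^{\operatorname{c}}$, which by \cite[14.1.6]{ADH} is again an elementary extension. One lands in an uncountable model whose topology has a countable base; there a discrete set is countable, so each $\pi_i(S)$ is countable, hence has empty interior in $K$, hence $\dim\pi_i(S)=0$ by the $n=1$ case of Corollary~\ref{defdimn}, and $\dim S=0$ follows. Nothing about $\Upl(K)$, $\Upo(K)$, or the value group enters; what is needed is the separability of the completion of a countable model, which your sketch does not supply and which I do not see how to extract from the route you propose.
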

\begin{proof} For $i=1,\dots, n$ we let
$\pi_i\colon K^n\to K$ be given by $\pi_i(a_1,\dots, a_n)=a_i$. 
If $\dim S =0$, then $\dim \pi_i(S)=0$ for all $i$, so $\pi_i(S)$
is discrete for all $i$, hence the cartesian product $\pi_1(S)\times \cdots \times \pi_n(S)\subseteq K^n$
is discrete, and so is its subset $S$.

Now for the converse. Assume $S\subseteq K^n$ is discrete. We first replace $K$ by a suitable countable elementary substructure over which $S$ is defined and $S$ by its corresponding trace.
Now that $K$ is countable we next pass to its completion $K^{\operatorname{c}}$ as defined in~\cite[Section~4.4]{ADH}, which by~\cite[14.1.6]{ADH} is
an elementary extension of $K$. Replacing $K$ by $K^{\operatorname{c}}$
and $S$ by the corresponding
extension, the overall effect is that we have arranged
$K$ to be {\em uncountable,}\/ but with a {\em countable}\/ base for its topology. 
Then the discrete set $S$ is countable, so $\pi_i(S)\subseteq K$
is countable for each $i$, hence with empty interior, so
$\dim \pi_i(S)=0$ for all $i$, and thus
$\dim S =0$.  
\end{proof}

\begin{cor} If $S\subseteq K^n$ is definable and discrete, then
there is a neighborhood~$U$ of $0\in K^n$ such that $(s_1+U)\cap (s_2+U)=\emptyset$ for all distinct $s_1, s_2\in S$.
\end{cor}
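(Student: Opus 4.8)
The plan is to reduce the assertion to the discreteness of the \emph{difference set}
$$S-S\ :=\ \big\{s_1-s_2:\ s_1,s_2\in S\big\}\ \subseteq\ K^n,$$
to which the preceding Proposition applies. I first record the elementary group-theoretic reformulation: for a symmetric neighborhood $U$ of $0$ and $s_1,s_2\in S$, a common point $s_1+u_1=s_2+u_2$ (with $u_1,u_2\in U$) exists iff $s_1-s_2=u_2-u_1\in U-U$; hence $(s_1+U)\cap(s_2+U)=\emptyset$ precisely when $s_1-s_2\notin U-U$. Thus it suffices to find a neighborhood $U$ of $0$ with $(S-S)\cap(U-U)\subseteq\{0\}$, that is, to separate the nonzero elements of $S-S$ uniformly from the origin.

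Next I check that $S-S$ is definable of dimension $0$. It is the image of the definable set $S\times S\subseteq K^{2n}$ under the definable subtraction map $(s_1,s_2)\mapsto s_1-s_2$ (a projection of a definable set, hence definable). Since $S$ is discrete, the preceding Proposition gives $\dim S=0$, so Lemma~\ref{dim3} yields $\dim(S\times S)=\dim S+\dim S=0$, and Proposition~\ref{Kddim} gives $\dim(S-S)\le\dim(S\times S)=0$. As $0\in S-S$, the set is nonempty, so $\dim(S-S)=0$.

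Applying the preceding Proposition once more, now to $S-S$, I conclude that $S-S$ is discrete. In particular $0$ is isolated in $S-S$, so some neighborhood of $0$ in $K^n$ meets $S-S$ only in $0$. Because the boxes $(-2\epsilon,2\epsilon)^n$ (with $\epsilon>0$ in $K$) form a base of neighborhoods of $0$ for the product of the order topologies, I may choose $\epsilon>0$ with $(-2\epsilon,2\epsilon)^n\cap(S-S)=\{0\}$. Setting $U:=(-\epsilon,\epsilon)^n$, a neighborhood of $0$, we have $U-U=(-2\epsilon,2\epsilon)^n$; so for distinct $s_1,s_2\in S$ we get $s_1-s_2\in(S-S)\setminus\{0\}$, whence $s_1-s_2\notin U-U$, and therefore $(s_1+U)\cap(s_2+U)=\emptyset$ by the reformulation above.

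I do not anticipate a serious obstacle. The one point that must be got right is the dimension drop for the difference set, which is exactly where the hypothesis enters: discreteness of $S$ forces $\dim S=0$, and then Lemma~\ref{dim3} together with Proposition~\ref{Kddim} forces $\dim(S-S)=0$, so that the Proposition can be invoked a second time. The remaining ingredients — the translation between ``isolated at the origin'' and a box neighborhood, and the reformulation of the separation condition in terms of $U-U$ — are routine.
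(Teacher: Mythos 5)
Your proof is correct and follows essentially the same route as the paper: the paper forms the scalar set $D=\{|a-b|:a,b\in S\}\subseteq K$ (using the max-norm) where you form the vector difference set $S-S\subseteq K^n$, but in both cases the key step is identical — the set of differences is a definable image of $S\times S$, hence has dimension $0$ by Lemma~\ref{dim3} and Proposition~\ref{Kddim}, hence is discrete by the Proposition, and isolating $0$ in it yields the uniform separating neighborhood.
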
 

\begin{proof} Let $S\subseteq K^n$ ($n\ge 1$) be nonempty, definable, and discrete. For $y\in K^n$ we set
$|y|:= \max_i |y_i|$. The set $D:=\big\{|a-b|:\ a,b\in S\big\}$
is the image of a definable map $S^2 \to K$, so
$D$ is definable with $\dim D = 0$ and $0\in D$. Thus~$D$ is 
discrete, so $(-\epsilon, \epsilon)\cap D= \{0\}$ for some $\epsilon\in K^{>}$,
which gives the desired conclusion.
\end{proof}

\noindent
In particular, any definable discrete subset of $K^n$ is closed in $K^n$.

\section{Parametrizability by Constants}\label{sec:pc}

\noindent
Let $K$ be a Liouville closed $\upo$-free newtonian $H$-field. 
Then $K$ induces on its constant field $C$ just $C$'s structure as 
a real closed field, by \cite[16.0.2(ii)]{ADH}, that is, a set $X\subseteq C^m$
is definable in $K$ iff $X$ is semialgebraic in the sense of $C$. 

Let $S\subseteq K^n$ be definable. We say that 
$S$ is {\bf parametrizable by constants\/} if ${S\subseteq f(C^m)}$ 
for some $m$ and some definable map $f\colon C^m\to K^n$; equivalently,
$S=f(X)$ for some injective definable map $f\colon X\to K^n$
with semialgebraic $X\subseteq C^m$ for
some $m$. (The reduction to injective $f$
uses the fact mentioned above about the induced structure on $C$.) For example, if $P\in K\{Y\}$ is a differential polynomial of degree $1$ in a single indeterminate $Y$, then
the set $\big\{y\in K:\ P(y)=0\big\}$ is either empty or a translate of
a finite-dimensional $C$-linear subspace of $K$, and so this set
is parametrizable by constants. The definable
sets in $K^n$ for $n=0,1,2,\dots$ that are parametrizable by constants make up a very robust class: it is closed under taking definable
subsets, and under some basic logical operations: taking finite unions (in the same $K^n$), cartesian products, and images under definable maps. Moreover:

\begin{lemma}\label{pc} Let $S\subseteq K^n$ and  $f\colon S \to C^m$ be definable,
and let $e\in \N$ be such that $|f^{-1}(c)|\le e$ for all 
$c\in C^m$. Then $S$ is parametrizable by constants.
\end{lemma}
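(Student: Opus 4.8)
The plan is to stratify $S$ by the size of the fibers of $f$, to produce on each stratum a finite list of definable ``sections'' of $K^n$ whose images cover that part of $S$, and then to invoke the closure properties of the class of sets parametrizable by constants. First I would set, for each $j\in\{1,\dots,e\}$,
\[
X_j\ :=\ \big\{c\in C^m:\ |f^{-1}(c)|=j\big\}.
\]
Since $j$ is fixed and $\le e$, the condition ``$f^{-1}(c)$ has exactly $j$ elements'' is expressed by a first-order formula (an existential statement asserting the existence of $j$ distinct fiber points together with a universal clause saying there are no others), so each $X_j$ is definable in $K$. As $X_j\subseteq C^m$, the fact that $K$ induces on $C$ only its real closed field structure shows that each $X_j$ is semialgebraic in the sense of $C$. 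Moreover $f(S)=\bigcup_{j=1}^e X_j$, and hence $S=\bigcup_{j=1}^e f^{-1}(X_j)$.

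The key step is to enumerate each finite fiber definably, and here I would use that $K$ is a real closed field, so that its ordering -- and with it the lexicographic ordering of $K^n$ -- is definable in $K$. For $c\in X_j$ the fiber $f^{-1}(c)$ is a set of exactly $j$ points of $K^n$, which I list in increasing lexicographic order as $p_1(c),\dots,p_j(c)$. Because $j$ is fixed, each assignment $c\mapsto p_i(c)$ $(1\le i\le j)$ is given by a first-order formula, so $p_i\colon X_j\to K^n$ is definable and $f^{-1}(c)=\{p_1(c),\dots,p_j(c)\}$ for every $c\in X_j$.

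Finally I would assemble the pieces. Each $p_i$ is a definable map from the semialgebraic set $X_j\subseteq C^m$ into $K^n$ (extend it by $0$ outside $X_j$ if one prefers a map defined on all of $C^m$), so its image $p_i(X_j)$ is parametrizable by constants straight from the definition. Since
\[
S\ =\ \bigcup_{j=1}^e f^{-1}(X_j)\ =\ \bigcup_{j=1}^e\ \bigcup_{i=1}^j p_i(X_j)
\]
is a finite union of subsets of $K^n$ each parametrizable by constants, the stated closure of this class under finite unions yields that $S$ is parametrizable by constants. The only real obstacle is the definable enumeration of the fibers; it is resolved for free by the presence of the definable linear order, which supplies the definable selection functions that in an unordered setting one would have to obtain from definable Skolem functions.
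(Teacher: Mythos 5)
Your proof is correct and follows essentially the same route as the paper's: partition according to fiber size, use the lexicographic order on $K^n$ to definably enumerate each finite fiber, and conclude by the closure of parametrizability by constants under finite unions and images. The paper merely phrases the partition as a reduction to the case where all fibers of $f$ on $f(S)$ have exactly $e$ elements, which is the same idea.
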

\begin{proof} By partitioning $S$ appropriately we reduce to
the case that for all $c\in f(S)$ we have $|f^{-1}(c)|= e$.
Using the lexicographic ordering on $K^n$ this yields definable
injective   $g_1,\dots, g_e\colon f(S)\to K^n$ such that
$f^{-1}(c)=\big\{g_1(c),\dots, g_e(c)\big\}$ for all $c\in f(S)$. 
Thus $S=g_1\big(f(S)\big)\cup \dots \cup g_e\big(f(S)\big)$ is parametrizable by constants.  
\end{proof}

\noindent
Suppose $S\subseteq K^n$ be definable.
Note that if $S$ is parametrizable by constants, then $\dim S \le 0$. The question arises if the converse holds: does it follow from $\dim S = 0$ that $S$ is parametrizable by constants?
We show that the answer is negative for $K=\T$ and the set
$$\big\{y\in \T:\ yy''=(y')^2\big\}\ =\ \{a\ex^{bx}:\ a,b\in \R\}.$$
This set has dimension $0$ and we claim that it is not parametrizable by constants. (The map $(a,b)\mapsto a\ex^{bx}: \R^2\to \T$ would be a parametrization of 
this set by constants if $\exp$ were definable in $\T$; we return to this issue
at the end of this section.) 
To justify this claim we appeal to a special case of results from \cite{ADH1}:

\medskip\noindent
{\em For any finite set $A\subseteq \T$ there exists an automorphism of the differential field $\T$ over $A$ that is not the identity on $\{\ex^{bx}:\ b\in \R\}$}.

\medskip
\noindent
The claimed nonparametrizability by constants follows when we combine this fact with the observation
that if $f\colon \R^m \to \T$ is definable in $\T$, say over the finite set~${A\subseteq \T}$, then any automorphism
of the differential field $\T$ over $A$ fixes each real number, and so it fixes each value of the function $f$.

\medskip
\noindent
Below $Y$ is a single indeterminate, and for $P\in K\{Y\}$ we let $$\Zero(P)\ :=\ \big\{y\in K:\ P(y)=0\big\}.$$
Thus $\Zero\!\big(YY''-(Y')^2\big)=\{a\ex^{bx}: a,b\in \R\}$ for $K=\T$ and $YY''-(Y')^2$ has order~$2$. What about the parametrizability
of $\Zero(P)$ for $P$ of order $1$? 
In the next two lemmas we consider the special case 
$P(Y)=F(Y)Y'-G(Y)$ where  $F,G\in C[Y]^{\neq}$ have no common factor of positive degree.

\begin{lemma}\label{pc1}
If $\frac{F}{G} = c\frac{\partial R}{\partial Y}/R$ for some $c\in C^\times$, $R\in C(Y)^\times$, or $\frac{F}{G} = \frac{\partial R}{\partial Y}$ for some $R\in C(Y)^\times$,
then $\Zero(P)$ is parametrizable by constants.
\end{lemma}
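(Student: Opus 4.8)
The plan is to apply Lemma~\ref{pc}: I will exhibit a definable map from the nonconstant part of $\Zero(P)$ into $C$ whose fibers are finite of uniformly bounded size. First split $\Zero(P)$ into the constant part $\Zero(P)\cap C$ and the nonconstant part $S:=\Zero(P)\setminus C$, both definable since $C=\{a\in K:a'=0\}$. For constant $y$ we have $P(y)=-G(y)$, so $\Zero(P)\cap C$ is the finite set of zeros of $G$ in $C$; being finite it is parametrizable by constants, and since that class is closed under finite unions it remains to handle $S$. The crucial remark about $S$ is that any nonconstant $y\in K$ is transcendental over $C$ (an element of $K$ algebraic over $C$ is a constant, as one sees by differentiating its minimal polynomial over $C$), so $y$ is neither a zero nor a pole of any nonzero rational function over $C$; hence for $y\in S$ the values $F(y),G(y),R(y),\frac{\partial R}{\partial Y}(y)$ are all defined and nonzero, and the chain rule $(R(y))'=\frac{\partial R}{\partial Y}(y)\,y'$ holds.

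Next I transform the equation $F(y)y'=G(y)$ for $y\in S$: dividing by $G(y)$ gives $\frac{F(y)}{G(y)}\,y'=1$. In the first case $\frac{F}{G}=c\,\frac{\partial R/\partial Y}{R}$ this reads $c\,R(y)^\dagger=1$, i.e.\ $R(y)^\dagger=1/c$; in the second case $\frac{F}{G}=\frac{\partial R}{\partial Y}$ it reads $(R(y))'=1$. Because $K$ is Liouville closed I can fix an element $b\in K^\times$ with $b^\dagger=1/c$ in the first case, and I take $b:=x$ (so $b'=1$) in the second. Then $(R(y)/b)^\dagger=0$ in the first case and $(R(y)-x)'=0$ in the second, so that $R(y)/b\in C^\times$, respectively $R(y)-x\in C$.

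I would therefore define a definable map $g\colon S\to C$ by $g(y):=R(y)/b$ (first case) or $g(y):=R(y)-x$ (second case), using $b$ as a parameter; by the above $g$ does land in $C$. Writing $R=R_1/R_2$ in lowest terms with $R_1,R_2\in C[Y]$ and $e:=\max(\deg R_1,\deg R_2)$, each fiber $g^{-1}(\lambda)$ consists of zeros in $K$ of the polynomial $R_1(Y)-\lambda b\,R_2(Y)$ (resp.\ $R_1(Y)-(x+\lambda)R_2(Y)$); this polynomial has degree $\le e$ and is nonzero, since otherwise $R=R_1/R_2$ would be the constant $\lambda b$ (resp.\ $x+\lambda$) as a rational function of $Y$, contradicting that $R$ is nonconstant (note $\frac{\partial R}{\partial Y}\ne 0$ as $F\ne 0$). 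Thus $|g^{-1}(\lambda)|\le e$ for every $\lambda$, and Lemma~\ref{pc} yields that $S$ is parametrizable by constants; combined with the finite constant part this proves the lemma. The step needing the most care is this uniform finiteness of the fibers of $g$ together with the check that $g$ takes values in $C$ --- the reduction of the differential equation itself is just the chain rule and Liouville closedness.
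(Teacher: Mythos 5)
Your proof is correct and follows essentially the same route as the paper's: reduce the equation to $R(y)^\dagger=1/c=b^\dagger$ (resp.\ $R(y)'=1=x'$), so that $y\mapsto R(y)/b$ (resp.\ $y\mapsto R(y)-x$) is a definable map into $C$ with uniformly finite fibers, then invoke Lemma~\ref{pc}. The only differences are cosmetic: you discard the finite constant part of $\Zero(P)$ and use transcendence over $C$ to see that the remaining points avoid the zeros and poles of $G$ and $R$, where the paper instead excludes those finitely many bad points directly, and you spell out the uniform bound $e$ on fiber size that the paper leaves as ``clear''.
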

\begin{proof}
Suppose $\frac{F}{G} = c\frac{\partial R}{\partial Y}/R$ where  $c\in C^\times$, $R\in C(Y)^\times$.
Since $K$ is Liouville closed we can take~$b\in K^\times$ with $b^\dagger=1/c$.
Set $S:=\big\{y\in\Zero(P): G(y)\neq 0,\ R(y)\neq 0,\infty\big\}$.
Then for $y\in S$
we have $$0\ =\ G(y)\left(\textstyle\frac{F(y)}{G(y)}\,y'-1\right)\ =\
G(y) \left(c\big(\textstyle\frac{\partial R}{\partial Y}/R\big)(y)\,y'-1\right)\ =\ G(y)\big(cR(y)^\dagger-1\big)$$
and so $R(y)\in C^\times b$. It is clear that we can take $e\in \N$ such that the definable map $f\colon S\to C$ given by $f(y):=R(y)/b$
for $y\in S$ satisfies $|f^{-1}(c)|\leq e$ for all $c\in C$.
Hence $S$, and thus $\Zero(P)$, is parametrizable by constants by Lemma~\ref{pc}.
Next, suppose that $\frac{F}{G} = \frac{\partial R}{\partial Y}$ where $R\in C(Y)$. Take $x\in K$ with $x'=1$ and
set $S:=\big\{y\in\Zero(P): G(y)\neq 0,\ R(y)\neq \infty\big\}$. 
As before we obtain for $y\in S$ that $R(y)\in x+C$, and so~$\Zero(P)$ is parametrizable by constants.
\end{proof}

\noindent
Let $Q\in K\{Y\}$ be irreducible and let $a$ be an element of a differential field extension of $K$ with minimal annihilator~$Q$ over $K$. We say that $Q$  {\bf creates a constant} if
$C_{K\<a\>}\neq C$.
(This is related to the concept of ``nonorthogonality to the constants'' in the model theory of differential fields; see
\cite[Proposition~2.6]{McGrail}.) Note that our 
 $P=F(Y)Y'-G(Y)$ is irreducible in $K\{Y\}$.
 
\begin{lemma}\label{lem:Rosenlicht}
$P$ creates a constant iff 
$\frac{F}{G} = c\frac{\partial R}{\partial Y}/R$ for some $c\in C^\times$, $R\in C(Y)^\times$, or $\frac{F}{G} = \frac{\partial R}{\partial Y}$ for some $R\in C(Y)^\times$.
\end{lemma}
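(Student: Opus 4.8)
The plan is to first reduce to a concrete statement about the rational function field $K(a)$, then dispatch the (easy) forward direction by an explicit logarithmic-derivative computation, and finally prove the converse by a Rosenlicht-style residue analysis. Since $P=F(Y)Y'-G(Y)$ is irreducible of order $1$ and $F,G$ are coprime in $C[Y]$, the element $a$ is transcendental over $K$ (an algebraic $a$ would have an order-$0$ minimal annihilator), so $K\langle a\rangle=K(a)$, and $F(a),G(a)\ne 0$, giving $a'=G(a)/F(a)=:\rho(a)$ with $\rho=G/F\in C(Y)$. Recalling that the constant field of $K$ is $C$, the statement ``$P$ creates a constant'' is exactly the existence of $u\in K(a)$ with $u'=0$ and $u\notin C$.

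For the direction from the displayed forms to creating a constant I would argue directly. Suppose first $F/G=c\,\partial_Y R/R$ with $c\in C^\times$, $R\in C(Y)^\times$ (necessarily nonconstant). Since the coefficients of $R$ lie in $C$, the only contribution to $R(a)'$ is through $a$, so $R(a)'=(\partial_Y R)(a)\,a'$ and hence $R(a)^\dagger=(\partial_Y R/R)(a)\,a'=\tfrac1c(F/G)(a)\,a'=\tfrac1c$. As $K$ is Liouville closed, pick $b\in K^\times$ with $b^\dagger=1/c$; then $(R(a)/b)^\dagger=0$, so $R(a)/b$ is a constant, and it is not in $C$ because $R$ nonconstant and $a$ transcendental force $R(a)\notin K$. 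In the second case $F/G=\partial_Y R$ one computes $R(a)'=(\partial_Y R)(a)\,a'=(F/G)(a)\,a'=1=x'$, so $R(a)-x$ is a constant not in $C$. Either way $P$ creates a constant.

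For the converse, take $u\in K(a)\setminus C$ with $u'=0$; then $u\notin K$ (else $u\in C$). Identifying $K(a)=K(Y)$ via $a\mapsto Y$, the derivation becomes $D=\rho\,\partial_Y+\delta$, where $\delta$ is the unique derivation of $K(Y)$ extending that of $K$ with $\delta Y=0$, and $Du=0$ becomes the identity $\rho\,\partial_Y u+\delta u=0$ in $K(Y)$, with $\partial_Y u\ne 0$, so $\rho=-\delta u/\partial_Y u$. I would then extend the derivation uniquely to $\bar K=K^{\mathrm{alg}}$, factor $u=\lambda\prod_i(Y-\gamma_i)^{e_i}$ with $\lambda\in\bar K^\times$, distinct $\gamma_i\in\bar K$, and $e_i\in\Z\setminus\{0\}$, and take logarithmic derivatives. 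Using $Du=0$ this yields in $\bar K(Y)$ the identity
\[ \lambda^\dagger+\sum_i \frac{e_i\big(\rho-\gamma_i'\big)}{Y-\gamma_i}=0. \]

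The heart of the proof is to read off the shape of $\rho=G/F$ from this identity by a residue computation. Because $F,G\in C[Y]$, all poles of $\rho$ lie in $\bar C:=C^{\mathrm{alg}}$. At each $\gamma_i$ that is not a pole of $\rho$, vanishing of the residue forces $\rho(\gamma_i)=\gamma_i'$, while at the (finitely many, $\bar C$-rational) poles of $\rho$ the local expansions constrain the remaining $e_i$ and the polynomial part of $\rho$. Pushing this analysis through should show that the partial-fraction data of $F/G$ collapses to exactly one of two pure forms: either $F/G$ is residue-free, so $\int(F/G)\,dY\in\bar C(Y)$, or $\int(F/G)\,dY=c\log R$ is a single logarithm with no rational part. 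A final Galois descent over $C$ (using that $F/G\in C(Y)$ and that a rational antiderivative, respectively single-logarithm datum, is unique up to $\bar C$-constants, hence $\Gal(\bar C/C)$-invariant) then produces $R\in C(Y)^\times$ and, in the logarithmic case, $c\in C^\times$, giving $F/G=\partial_Y R$ or $F/G=c\,\partial_Y R/R$. I expect the residue bookkeeping to be the main obstacle: one must rule out the ``mixed'' possibility of a nonzero rational part together with a nonzero logarithmic part (and the possibility of two $\Q$-independent residues), which is precisely where the existence of the genuine constant $u$—rather than a mere formal antiderivative—is used, and one must carry out the descent from $\bar C$ to the possibly non-algebraically-closed field $C$.
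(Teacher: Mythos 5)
Your argument for the implication from the two displayed forms to the creation of a constant is correct and is essentially the paper's own proof of that direction (which the paper labels the ``backward'' direction): using Liouville closedness, take $b\in K^\times$ with $b^\dagger=1/c$, respectively $x\in K$ with $x'=1$, and check that $R(a)/b$, respectively $R(a)-x$, is a constant of $K\langle a\rangle=K(a)$ not lying in $K$. Your justification that this constant is genuinely new (via transcendence of $a$ and nonconstancy of $R$) is fine.

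For the other implication --- a new constant in $K\langle a\rangle$ forces one of the two forms --- there is a genuine gap. The paper does not prove this direction at all: it invokes Rosenlicht \cite[Proposition~2]{Rosenlicht74} verbatim. You instead set out to reprove Rosenlicht's result from scratch. Your reduction to the identity $\rho\,\partial_Y u+\delta u=0$ in $K(Y)$ and the logarithmic-derivative identity $\lambda^\dagger+\sum_i e_i(\rho-\gamma_i')/(Y-\gamma_i)=0$ over $\bar K$ is a reasonable starting point, but the decisive steps are left unexecuted, and you say so yourself: the residue bookkeeping that forces the partial-fraction data of $F/G$ into exactly one of the two pure shapes (excluding a nonzero rational part coexisting with a logarithmic part, and excluding two $\Q$-linearly independent residues), together with the descent of the pair $(c,R)$ from $\bar C$ to $C$ --- which is delicate in the logarithmic case, since $c\,\partial_Y R/R=(c/n)\,\partial_Y(R^n)/R^n$ means the datum is not unique and Galois-invariance of the pair is not automatic --- is precisely the content of Rosenlicht's proposition. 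As written, this half of your proof is an announced plan rather than an argument; you should either carry out the residue analysis and the descent in full, or do what the paper does and cite Rosenlicht for this direction.
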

\begin{proof}
The forward direction holds by Rosenlicht~\cite[Proposition~2]{Rosenlicht74}. For the backward direction, take an element $a$  of a differential field extension of $K$ with minimal annihilator~$P$ over $K$. Consider first the case 
$\frac{F}{G} = c\frac{\partial R}{\partial Y}/R$ where $c\in C^\times$ and $R\in C(Y)^\times$. Take $b\in K^\times$ with $b^\dagger=1/c$.
As in the proof of Lemma~\ref{pc1} we obtain 
$0 = P(a) = G(a)\big(cR(a)^\dagger-1\big)$
with $G(a)\ne 0$, and thus $R(a)/b\in C_{K\<a\>}$ and
$R(a)/b\notin K$. The case 
$\frac{F}{G}=\frac{\partial R}{\partial Y}$
with $R\in C(Y)^\times$ is handled likewise. 
\end{proof}

\noindent
The following proposition therefore generalizes Lemma~\ref{pc1}:

\begin{prop}\label{pr:par1} If $P\in K\{Y\}$ is irreducible of order $1$ and creates a constant, then 
$\Zero(P)$ is parametrizable by constants.
\end{prop}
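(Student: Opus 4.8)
The plan is to extract from the created constant a \emph{definable first integral} on $\Zero(P)$ with uniformly bounded finite fibers, and then invoke Lemma~\ref{pc}. Let $a$ have minimal annihilator $P$ over $K$, so that $\frak p=\{Q\in K\{Y\}:Q(a)=0\}$ is the prime differential ideal with generic point $a$, and by hypothesis there is a constant $\gamma\in C_{K\<a\>}\setminus C$. Since $P$ has order $1$ and is the minimal annihilator of $a$, the separant $S_P:=\partial P/\partial Y'$ does not vanish at $a$ (else a differential polynomial of lower complexity would annihilate $a$); differentiating $P(a,a')=0$ then expresses $a''$, and hence every $a^{(r)}$, as a rational function of $a,a'$ over $K$. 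Thus $K\<a\>=K(a,a')$, and we may write $\gamma=\Phi(a,a')$ for some $\Phi\in K(Y,Y')$. Here $\Phi$ is nonconstant, since otherwise $\gamma\in K$ and, being a constant, $\gamma\in C$, contrary to assumption.

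First I would set $\psi(y):=\Phi(y,y')$ wherever defined; this is a map definable in $K$. To see that $\psi$ lands in $C$, rewrite $a''=\rho(a,a')$ for the rational function $\rho:=-(\partial P/\partial Y)/S_P$ coming from $P(a,a')=0$; then $\der\gamma=0$ becomes an identity $\Xi(a,a')=0$ with $\Xi\in K(Y,Y')$, and clearing denominators produces a $\d$-polynomial in $\frak p$. Consequently $\der(\psi(y))=0$, that is $\psi(y)\in C$, at every zero $y$ of $P$ where $S_P(y)$ and the relevant denominators are nonzero. The transfer from the generic point $a$ to concrete zeros $y\in\Zero(P)\subseteq K$ is where the differential-algebraic geometry of $\{P=0\}$ enters. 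The zeros with $S_P(y)=0$ have $y$ among the roots of $\operatorname{Res}_{Y'}(P,S_P)\in K[Y]$, hence form a finite subset of $K$; and among the nonsingular zeros, those at which a denominator of $\Phi$ or of $\Xi$ vanishes impose on $(y,y')$ an algebraic relation that, not holding identically at $(a,a')$, meets the irreducible plane curve $\{P=0\}$ in finitely many points and so contributes only finitely many $y$. These finite exceptional sets are parametrizable by constants in their own right, so it suffices to treat the \emph{good} locus $G\subseteq\Zero(P)$ of nonsingular zeros avoiding all these denominators. By Ritt--Kolchin, every nonsingular zero lies on the general component of $\{P=0\}$, the Kolchin-closed set with generic point $a$, so each relation in $\frak p$ holds throughout $G$; in particular $\psi(G)\subseteq C$.

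It then remains to bound the fibers of $\psi|_G$. If $\gamma$ is transcendental over $K$, then since $K\<a\>=K(a,a')$ has transcendence degree $1$ over $K$, the element $a$ is algebraic over $K(\gamma)$; clearing denominators in its minimal equation gives an irreducible $\Theta\in K[Y,Z]$ with $e:=\deg_Y\Theta\ge 1$ and $\Theta(a,\gamma)=0$. Propagating $\Theta\big(a,\Phi(a,a')\big)=0$ through $\frak p$ yields $\Theta\big(y,\psi(y)\big)=0$ on $G$, so for each $c\in C$ the elements of $\psi^{-1}(c)$ are roots of the nonzero polynomial $\Theta(Y,c)\in K[Y]$ (nonzero because $\Theta$ is irreducible of positive degree in $Y$), whence $|\psi^{-1}(c)|\le e$. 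If instead $\gamma$ is algebraic over $K$, then $\psi(G)$ lies in the finite set of roots of a minimal polynomial of $\gamma$ over $K$, and each such fiber is finite by the curve-intersection argument above, so $G$ itself is finite. Either way $\psi|_G$ is definable with uniformly bounded finite fibers, and Lemma~\ref{pc} shows that $G$, and therefore all of $\Zero(P)$, is parametrizable by constants.

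The main obstacle is exactly this transfer from the generic point to the concrete solution set: isolating the singular and polar loci (both finite) and securing the uniform fiber bound via algebraicity of $a$ over $K(\gamma)$. Once those are in hand, the rest is the routine bookkeeping of rational identities modulo $\frak p$ together with a single appeal to Lemma~\ref{pc}. I would not expect to need the linear-in-$Y'$ normalization of Lemmas~\ref{pc1} and~\ref{lem:Rosenlicht} directly here, since the first-integral $\psi$ handles arbitrary $Y'$-degree (as the equation $(Y')^2=Y$ already illustrates).
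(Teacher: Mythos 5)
Your proof is correct and follows essentially the same route as the paper's: pass to a generic zero $a$ with minimal annihilator $P$, write the created constant as a rational function of $(a,a')$ to obtain a definable map into $C$, bound its fibers via algebraicity of $a$ over the subfield generated by that constant, discard a finite exceptional locus, and apply Lemma~\ref{pc}. (Only trivial differences: the paper normalizes the constant as $A(a)/B(a)$ with $B\in K[Y]$ and rules out the case of the constant being algebraic over $K$ by placing $a$ in an elementary extension, whereas you treat that case directly; also your formula for $\rho$ should include the term from differentiating the coefficients of $P$, namely $\rho=-\big(P^{\der}+Y'\,\partial P/\partial Y\big)/S_P$, a harmless slip.)
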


\noindent
Before we give the proof of this proposition, we prove two lemmas, in both of which we let
$P\in K\{Y\}$ be irreducible of order~$1$ such that $\Zero(P)$ is infinite.

\begin{lemma}
Let $Q\in K[Y,Y']\subseteq K\{Y\}$. Then $\Zero(P)\subseteq \Zero(Q)$ iff $Q\in PK[Y,Y']$.
\end{lemma}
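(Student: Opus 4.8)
The backward direction is immediate: if $Q=PR$ with $R\in K[Y,Y']$, then $Q(y)=P(y)R(y)=0$ for every $y\in\Zero(P)$, whence $\Zero(P)\subseteq\Zero(Q)$. The substance is the forward direction, and my plan is to reduce the differential hypothesis to an elementary statement about common zeros of coprime polynomials in two ordinary variables.

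First I would note that, $P$ having order $1$, it lies in the polynomial ring $K[Y,Y']$ in the two algebraically independent indeterminates $Y=Y^{(0)}$ and $Y'=Y^{(1)}$, and has positive degree in $Y'$. Since the units of $K\{Y\}$ are exactly the units $K^\times$ of $K[Y,Y']$, any factorization of $P$ in $K[Y,Y']$ is one in $K\{Y\}$, so the irreducibility of $P$ in $K\{Y\}$ makes $P$ irreducible, hence prime, in the UFD $K[Y,Y']$.

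The key translation is this: for $y\in K$ the differential equation $P(y)=0$ is literally the algebraic condition $P(y,y')=0$, i.e.\ $(y,y')\in K^2$ is a zero of $P$ regarded as a two-variable polynomial, and likewise for $Q$. Thus $\Zero(P)\subseteq\Zero(Q)$ says precisely that every point $(y,y')$ with $y\in\Zero(P)$ is a common zero of $P$ and $Q$ in $K^2$. As $y\mapsto(y,y')$ is injective (the first coordinate recovers $y$) and $\Zero(P)$ is infinite, $P$ and $Q$ have infinitely many common zeros in $K^2$.

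Finally I would argue by contradiction: suppose $Q\notin PK[Y,Y']$. Then $Q\neq 0$, and as $P$ is prime with $P\nmid Q$, the polynomials $P$ and $Q$ are coprime in $K[Y,Y']$. It is standard that two coprime polynomials in two variables over a field have only finitely many common zeros — one bounds the admissible first coordinates by the (nonzero) resultant $\operatorname{Res}_{Y'}(P,Q)\in K[Y]$, and for each such $y$ bounds the second coordinate by the roots of $P(y,Y')$ — which contradicts the infinitude above and forces $Q\in PK[Y,Y']$. The only point needing care, and the main obstacle in making this precise, is the degenerate case where $Q$ has $Y'$-degree $0$ (so the $Y'$-resultant is unavailable): there one argues directly, using that $P$ is irreducible of positive $Y'$-degree, so that $(Y-y)\nmid P$ and hence $P(y,Y')$ is a nonzero polynomial in $Y'$ for every $y$, again leaving only finitely many second coordinates.
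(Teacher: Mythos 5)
Your proof is correct and follows essentially the same route as the paper's: the paper uses Gauss's lemma and Bézout in $K(Y)[Y']$ to produce a nonzero $D=AP+BQ\in K[Y]$, which is exactly the role your resultant $\operatorname{Res}_{Y'}(P,Q)$ plays, and both arguments conclude by observing that $\Zero(P)$ would then lie in the finite zero set of a nonzero one-variable polynomial, contradicting its infinitude. The only cosmetic difference is that you package the elimination step as the standard finiteness statement for common zeros of coprime bivariate polynomials (with the degenerate $\deg_{Y'}Q=0$ case handled separately), rather than exhibiting the Bézout combination directly.
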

\begin{proof}
Suppose $\Zero(P)\subseteq \Zero(Q)$ but $Q\notin PK[Y,Y']$. Put $F:=K(Y)$. By Gauss' Lemma, 
$P$ viewed as element of $F[Y']$ is irreducible and $Q\notin PF[Y']$. Thus there are   $A,B\in K[Y,Y']$, $D\in K[Y]^{\neq}$ with $D=AP+BQ$. Then $\Zero(P)\subseteq \Zero(D)$ is finite, a contradiction.
\end{proof}

\begin{lemma}
There is an element $a$ in an elementary extension of $K$ with minimal annihilator $P$ over $K$. 
\end{lemma}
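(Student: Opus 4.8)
The plan is to produce $a$ by realizing an appropriate type over $K$ in an elementary extension, the existence of the realization following from compactness once finite satisfiability is checked. Set $d := \deg_{Y'} P$; since $P$ has order~$1$ we have $d \ge 1$. Let $p(Y)$ be the type over $K$ consisting of the formula $P(Y) = 0$ together with all formulas $R(Y) \neq 0$, where $R$ ranges over the nonzero elements of $K[Y,Y']$ with $\deg_{Y'} R < d$. I claim that any realization $a$ of $p$ in an elementary extension $K^*$ of $K$ has minimal annihilator $P$ over $K$; granting this, it remains only to see that $p$ is finitely satisfiable in $K$, since a type over $K$ is realized in some elementary extension of $K$ precisely when each of its finite subsets is satisfiable in $K$.

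First I would establish finite satisfiability, which is where the hypothesis that $\Zero(P)$ is infinite enters. Given nonzero $R_1,\dots,R_k \in K[Y,Y']$ with $\deg_{Y'} R_i < d$, I must find $y \in \Zero(P)$ with $R_i(y) \neq 0$ for all $i$. For each $i$, the irreducibility of $P$ in $K(Y)[Y']$ (Gauss' Lemma, as in the previous lemma) together with $0 \le \deg_{Y'} R_i < d = \deg_{Y'} P$ and $R_i \neq 0$ shows that $P$ does not divide $R_i$ in $K(Y)[Y']$; hence there are $A_i, B_i \in K[Y,Y']$ and $D_i \in K[Y]^{\neq}$ with $A_i P + B_i R_i = D_i$, so that $\Zero(P) \cap \Zero(R_i) \subseteq \Zero(D_i)$ is finite. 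Since $\Zero(P)$ is infinite it is not contained in the finite union $\bigcup_i \big(\Zero(P) \cap \Zero(R_i)\big)$, and any $y$ outside this union does the job. By compactness, $p$ is then realized by some $a$ in an elementary extension $K^*$ of $K$.

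It then remains to verify the claim that such an $a$ has minimal annihilator $P$, and this rank bookkeeping is the step I expect to require the most care. Because $a$ satisfies no nonzero order-$0$ relation (these have degree $0 < d$ in $Y'$), $a$ is not algebraic over $K$, so its minimal annihilator $P_0$ has order~$1$; and because $a$ satisfies no nonzero order-$1$ relation of degree $< d$ in $Y'$, we get $\deg_{Y'} P_0 = d$. Now $P$ and $P_0$ both lie in $K[Y,Y']$, have degree $d$ in $Y'$, and annihilate $a$, with $P$ irreducible over $K(Y)$. A Bézout argument as above shows that if $P \nmid P_0$ in $K(Y)[Y']$ then $a$ would be algebraic over $K$, a contradiction; hence $P_0$ is a $K(Y)^\times$-multiple of $P$, and since both are primitive and irreducible in $K[Y,Y']$, Gauss' Lemma gives $P_0 \in K^\times P$. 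Thus $P$ is the minimal annihilator of $a$, completing the plan.
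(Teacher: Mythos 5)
Your proposal is correct and follows essentially the same route as the paper: realize by compactness the type $\{P(Y)=0\}\cup\{R(Y)\neq 0: R\in K[Y,Y']^{\neq},\ \deg_{Y'}R<\deg_{Y'}P\}$, with finite satisfiability coming from the infinitude of $\Zero(P)$ via the Gauss/B\'ezout argument (the paper gets this by applying its preceding lemma to the product $Q_1\cdots Q_n$, you rerun the same argument factor by factor). Your final paragraph just makes explicit the verification that a realization has minimal annihilator $P$, which the paper leaves to the reader.
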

\begin{proof}
Given $Q_1,\dots,Q_n\in K[Y,Y']^{\ne}$ with $\deg_{Y'}Q_i<\deg_{Y'}P$ for $i=1,\dots,n$, the previous lemma applied to $Q:=Q_1\cdots Q_n$ yields some $y\in K$ with $P(y)=0$ and $Q_i(y)\neq 0$ for all $i=1,\dots,n$.
 Now use compactness. 
\end{proof}

\begin{proof}[Proof of Proposition~\ref{pr:par1}] We can assume that $S:=\Zero(P)$ 
is infinite. The preceding lemma yields  an element $a$ in an elementary extension of $K$ with~${P(a)=0}$ and
$Q(a)\ne 0$ for all $Q\in K[Y, Y']^{\ne}$ with 
$\deg_{Y'}Q< d:=\deg_{Y'}P$. In particular,~$a$ is 
transcendental over $K$. Since $P$ creates a constant, $K\<a\>=K(a,a')$ has a constant~$c\notin C$. We have $c=A(a)/B(a)$ with 
$A\in K[Y, Y']$, $\deg_{Y'} A < d$,
$B\in K[Y]^{\ne}$. From $c'=0$ we get $A'(a)B(a)-A(a)B'(a)=0$, so
$$A'(Y)B(Y)-A(Y)B'(Y)\ =\ D(Y)P(Y)\ \text{ in }K\{Y\}\ 
\text{ with }\ D\in K[Y].$$
Hence for $y\in S$ with $B(y)\ne 0$ we have $\big(A(y)/B(y)\big)'=0$, 
that is, $A(y)/B(y)\in C$.  
Thus for $S_B:=\big\{y\in S:\ B(y)\ne 0\big\}$ we have a definable map
$$f\colon S_B \to C, \qquad f(y)\ :=\ A(y)/B(y).$$ 
Since $c$ is transcendental over $K$, $a$ is algebraic over $K(c)$, say 
$$F_0(c)a^e+ F_1(c)a^{e-1} + \cdots + F_e(c)\ =\ 0,$$
where $F_0,F_1,\dots, F_e\in K[Z]$ have no common divisor of 
positive degree in $K[Z]$. Let $G:=\partial P/\partial Y'$ be 
the separant of $P$. Then $G(a)\ne 0$, 
$K\big[a, a', 1/B(a), 1/G(a)\big]$ is a differential subring of $K(a,a')$,
and every $y\in S_B$ with $G(y)\ne 0$ yields a differential ring
morphism $$\phi_y\ :\  K\big[a, a', 1/B(a), 1/G(a)\big] \to K$$ that is 
the identity on $K$
with $\phi_y(a)=y$; see the subsection on minimal annihilators
in \cite[Section~4.1]{ADH}. Moreover, 
$c=A(a)/B(a)\in K\big[a, a', 1/B(a), 1/G(a)\big]$, and so for 
$y\in S_B$ with $G(y)\ne 0$ we have $\phi_y(c)=A(y)/B(y)=f(y)$, so
$$F_0\big(f(y)\big)y^e + F_1\big(f(y)\big)y^{e-1}+ \cdots + F_e\big(f(y)\big)\ =\ 0.$$ 
Set $S_{B,G}:=\big\{y\in S_B:\ G(y)\ne 0\big\}$. Then
$S\setminus S_{B,G}$ is finite, and the above shows that
for all $z\in f(S_{B,G})$ we have $|f^{-1}(z)\cap S_{B,G}|\le e$.
Now use Lemma~\ref{pc}. 
\end{proof}

\noindent
Freitag~\cite{Freitag} proves a generalization of Lemma~\ref{lem:Rosenlicht}.
Nishioka (\cite{Nishioka}, see also \cite[p.~90]{Matsuda}) gives sufficient conditions on irreducible differential polynomials of order~$1$ to create a constant,
involving the concept of ``having no movable singularities''; this can be used to give
further examples of $P\in K\{Y\}$ of order~$1$ whose zero set is parametrizable by constants.
But we do not know whether~$\Zero(P)$ is parametrizable by constants for every $P\in K\{Y\}$ of order $1$.

\subsection*{Open problems} The definable set 
$$\big\{y\in \T:\ yy''=(y')^2\big\}\ =\ \{a\ex^{bx}:\ a,b\in \R\}\subseteq \T^2$$ is the image of the map $(a,b) \mapsto a\ex^{bx}\colon\ \R^2\to \T^2$,
and so by the above negative result this map is not definable in the differential field
$\T$. But it is definable in the {\em exponential\/} differential field~$(\T, \exp)$, where exponentiation on $\T$ is taken
as an extra primitive. This raises the question whether parametrizability by constants holds in an extended sense where
the parametrizing maps are allowed to be definable in $(\T, \exp)$. More precisely, {\em if $S\subseteq \T^n$ is definable in $\T$ with
$\dim S = 0$, does there always exist an $m$ and a map $f\colon \R^m \to \T^n$, definable in $(\T, \exp)$, with
$S\subseteq f(\R^n)$}\/? (It is enough to have this for $n=1$ and $S=\big\{y\in \T:\ P(y)=0\big\}$, $P\in\T\{Y\}^{\ne}$.) 

\medskip\noindent
This is of course related to the issue whether the results in
\cite[Chapter~16]{ADH} about~$\T$ generalize to its expansion
$(\T, \exp)$. In particular, 
{\em is the structure induced on $\R$ by~$(\T, \exp)$ 
just the exponential field structure of $\R$}?



It would be good to know more about the order types of discrete definable subsets of 
Liouville closed $\upo$-free newtonian $H$-fields $K$. For example, 
can any such set have order type $\omega$, or more generally,
 have an initial segment of order type $\omega$?


 
\section{Dimension $0$~=~Co-Analyzable Relative to the Constant Field}\label{sec:ca}

\noindent
Parametrizability by constants was our first guess
of the model-theoretic significance of \cite[Theorem 16.0.3]{ADH} which says that a Liouville closed $\upo$-free newtonian $H$-field has no proper differentially-algebraic $H$-field extension
with the same constants. 
 As we saw, this guess failed on the set of zeros of
$YY''-(Y')^2$. We subsequently realized that
the notion of {\em co-analyzability\/} from~\cite{HHM} fits exactly our situation. 
Below we expose what we need from that paper, and next we apply it to $\T$.

\subsection*{Co-analyzability}
We adopt here the model-theory notations of~\cite[Appendix~B]{ADH}. Let $\mathcal L$ be a first-order language with a distinguished unary relation symbol~$C$. For convenience we assume $\mathcal L$ is {\em one-sorted}. Let $\mathbf M=(M;\dots)$ be an $\mathcal L$-structure and let $C^{\mathbf M}\subseteq M$ (or just $C$ if
$\mathbf M$ is clear from the context) be the interpretation of the symbol $C$ in $\mathbf M$; we assume $C\ne \emptyset$.  

\medskip\noindent 
Assume $\mathbf M$ is $\omega$-saturated.
Let $S\subseteq M^n$ be definable. 
By recursion on $r\in \N$ we 
define what makes $S$ {\bf co-analyzable in
$r$ steps} (tacitly: relative to $\mathbf M$ and $C$):
\begin{enumerate}
\item[(C$_{0}$)] $S$ is co-analyzable in $0$ steps iff $S$ is finite;
\item[(C$_{r+1}$)] $S$ is co-analyzable in $r+1$ steps iff for some definable set 
$R\subseteq C\times M^n$,
\begin{enumerate}
\item the natural projection $C\times M^n \to M^n$ maps $R$ onto
$S$;
\item for each $c\in C$, the section $R(c):=\big\{s\in M^n:(c,s)\in R\big\}$ above~$c$ is
co-analyzable  in $r$ steps. \end{enumerate}
\end{enumerate}
We call $S$ {\bf co-analyzable} if $S$ is co-analyzable  in $r$ steps for some~$r$.

\medskip\noindent
Thus in (C$_{r+1}$) the set $R$ gives rise to a covering $S=\bigcup_{c\in C} R(c)$
of $S$ by definable sets $R(c)$ that are co-analyzable in $r$ steps. Of course, the definable set $C^r\subseteq M^r$ is the archetype of a definable set that is co-analyzable in $r$ steps. 
Note that if $S$ is co-analyzable in $1$ step, then
the $\omega$-saturation of $\mathbf M$ yields
for $R$ as in (C$_1$) a uniform bound $e\in \N$ such that $|R(c)|\le e$ for all $c\in C$. This $\omega$-saturation gives likewise an automatic uniformity in (C$_{r+1}$) that enables us to extend
the notion of co-analyzability appropriately to arbitrary $\mathbf M$ (not necessarily
$\omega$-saturated). Before doing this, we mention some easy consequences of the definition above where
we do assume $\mathbf M$ is $\omega$-saturated. First, if the definable
set $S\subseteq M^n$ is co-analyzable in $r$ steps, then $S$ is co-analyzable in $r+1$ steps: use induction on $r$.
Second, if the definable
set $S\subseteq M^n$ is co-analyzable in $r$ steps, then so is any definable subset of $S$, and the image $f(S)$ under any definable
map $f\colon S\to M^m$. Third, if the definable sets
$S_1, S_2\subseteq M^n$ are co-analyzable in $r_1$ and $r_2$ steps, respectively, then $S_1\cup S_2$ is
co-analyzable in $\max(r_1,r_2)$ steps. Finally, if
the definable sets 
$S_1\subseteq M^{n_1}$ and $S_2\subseteq M^{n_2}$ are co-analyzable in $r_1$ steps and $r_2$ steps, respectively,
then $S_1\times S_2\subseteq M^{n_1+n_2}$ is co-analyzable
in $r_1+r_2$ steps. In any case, the class of co-analyzable definable sets
is clearly very robust.

\medskip\noindent
Next we extend the notion above to arbitrary $\mathbf M$, not necessarily $\omega$-saturated. Let $S\subseteq M^n$ be definable. Define an {\bf $r$-step co-analysis of $S$\/} by recursion
on $r\in \N$ as follows: for $r=0$ it is an $e\in \N$ with $|S|\le e$.
For $r=1$ it is a tuple $(e,R)$ with~$e\in \N$ and definable
$R\subseteq C\times M^n$ such that the natural projection
$C\times M^n\to M^n$ maps~$R$ onto $S$, and $|R(c)|\le e$ for
all $c\in C$. Given $r\ge 1$, an $(r+1)$-step co-analysis of
$S$ is a tuple $(e,R_1,\dots,R_{r+1})$ with $e\in\N$ and definable 
sets $$R_i\subseteq  C\times M^n \times M^{d_{i}} \times \cdots\times M^{d_{r}} \quad (i=1,\dots,r+1,\ d_1,\dots,d_{r}\in\N),$$
(so $R_{r+1}\subseteq C\times M^n$), 
such that the natural projection $C\times M^n  \to M^n$ maps $R_{r+1}$ onto $S$,
and for each $c\in C$ there exists $b\in M^{d_{r}}$ for which
the tuple $\big(e,R_1^b,\dots,R_{r}^b\big)$ is an $r$-step co-analysis of $R_{r+1}(c)\subseteq S$. (Here we use the following notation for a relation $R\subseteq P\times Q$: for $q\in Q$ we
set $R^q:=\{p\in P:\ (p,q)\in R\}$.)

For model-theoretic use the reader should note the following 
uniformity with respect to parameters from $M^m$: let $e, R_1,\dots, R_{r+1},S$ be given with
$e\in \N$, $0$-definable $R_i\subseteq M^m\times C \times M^{d_i}\times \cdots \times M^{d_r}$ for $i=1,\dots,r+1$, and $0$-definable $S\subseteq M^m\times M^n$. Then the set of $a\in M^m$ such that $\big(e, R_1(a),\dots, R_{r+1}(a)\big)$ is an $(r+1)$-step co-analysis of $S(a)$ is $0$-definable. 
Moreover, one can take a defining $\mathcal{L}$-formula for this subset of
$M^m$ that depends only on $e$ and given defining $\mathcal{L}$-formulas for 
$R_1,\dots, R_{r+1},S$, not on $\mathbf M$.  

\medskip\noindent
If $\mathbf M$ is $\omega$-saturated, then a definable set $S\subseteq M^n$ 
can be shown to be co-analyzable
in $r$ steps iff there exists an $r$-step co-analysis of $S$.
(To go from co-analyzable in~$r$ steps to an $r$-step co-analysis requires the
uniformity noted above.) Thus for arbitrary $\mathbf M$ and definable
$S\subseteq M^n$ we can define without ambiguity $S$ to be co-analyzable in 
$r$ steps
if there exists an $r$-step co-analysis of $S$; likewise, $S$ is defined to 
be co-analyzable if $S$ is co-analyzable in $r$ steps for some $r$. 
After the proof of Lemma~\ref{ca=fib} we give an example of a definable 
$S\subseteq \T$ that is co-analyzable in $2$ steps but not in $1$ step 
(relative to $\T$ and $\R$).

\medskip
\noindent
Let $S\subseteq M^n$ be definable and $\mathbf M^*$
an elementary extension of $\mathbf M$. We denote by 
$S^*\subseteq (M^*)^n$ the extension of $S$ to $\mathbf M^*$: choose 
 an $\mathcal L_M$-formula $\varphi(x)$, where $x=(x_1,\dots,x_n)$, with $S=\varphi^{\mathbf M}$, and set
 $S^*:=\varphi^{\mathbf M^*}$. Then for a tuple $(e,R_1,\dots, R_{r+1})$ with $e,r\in \N$ and definable $R_i\subseteq  C\times M^n \times M^{d_{i}} \times \cdots\times M^{d_{r}}$ for 
 $i=1,\dots,r+1$ we have:
 $(e, R_1,\dots, R_{r+1})$ is an $(r+1)$-step co-analysis of $S$ iff
$(e, R_1^*,\dots, R_{r+1}^*)$ is an $(r+1)$-step co-analysis of $S^*$. 
Here is \cite[Proposition 2.4]{HHM}:  

\begin{prop}\label{hmm} 
Let the language $\mathcal L$ be countable and let $T$ be a complete $\mathcal L$-theory such that
$T\vdash \exists x C(x)$.  
Then the following conditions on an $\mathcal L$-formula $\varphi(x)$ with $x=(x_1,\dots,x_n)$ are equivalent:
\begin{enumerate}
\item[(i)] for some  model $\mathbf M$ of $T$, $\varphi^{\mathbf M}$ is co-analyzable;
\item[(ii)] for every model $\mathbf M$ of $T$, $\varphi^{\mathbf M}$ is co-analyzable;
\item[(iii)] for every model $\mathbf M$ of $T$, if $C^{\mathbf M}$ is countable, then so is $\varphi^{\mathbf M}$;
\item[(iv)] for all models $\mathbf M\preceq\mathbf M^*$ of $T$, if
$C^{\mathbf M}=C^{\mathbf M^*}$, then $\varphi^{\mathbf M}=\varphi^{\mathbf M^*}$.
\end{enumerate}
\end{prop}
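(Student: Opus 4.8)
The plan is to prove the cycle $(i)\Rightarrow(ii)\Rightarrow(iii)\Rightarrow(iv)\Rightarrow(i)$; since $(ii)\Rightarrow(i)$ is trivial (a consistent complete $T$ has a model), $(i)$ and $(ii)$ are interchangeable throughout. For $(i)\Rightarrow(ii)$ I would use completeness together with the uniformity recorded just before the proposition. Suppose $\varphi^{\mathbf M_0}$ is co-analyzable for one model $\mathbf M_0$, and fix $r$ and an $r$-step co-analysis of $\varphi^{\mathbf M_0}$; writing its defining data as $\mathcal L$-formulas with a parameter tuple $\bar a_0\in M_0^m$, the uniformity yields a single $\mathcal L$-formula $\sigma(\bar a)$, depending only on $e$ and on the chosen formulas and not on the model, that defines the set of $\bar a$ for which the corresponding tuple is an $r$-step co-analysis of $\varphi$. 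Then $\mathbf M_0\models\exists\bar a\,\sigma(\bar a)$, so by completeness $T\vdash\exists\bar a\,\sigma(\bar a)$, and hence every model of $T$ carries such a co-analysis.

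For $(ii)\Rightarrow(iii)$ I would show by induction on $r$ that a definable set that is co-analyzable in $r$ steps over a countable $C$ is itself countable: the case $r=0$ is finiteness, and an $(r{+}1)$-step co-analysis writes $S=\bigcup_{c\in C}R_{r+1}(c)$ with each section $R_{r+1}(c)$ co-analyzable in $r$ steps, hence countable by the inductive hypothesis, so that $S$ is a countable union of countable sets. For $(iii)\Rightarrow(iv)$ I would argue contrapositively: a failure of (iv) is a pair $\mathbf M\preceq\mathbf M^*$ with $C^{\mathbf M}=C^{\mathbf M^*}$ but $\varphi^{\mathbf M}\subsetneq\varphi^{\mathbf M^*}$, and since $\varphi$ grows while $C$ is frozen this is exactly a \emph{Vaughtian pair for $T$ with respect to the predicate $C$}. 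As $\mathcal L$ is countable, Vaught's two-cardinal theorem produces a model $\mathbf N\models T$ with $|C^{\mathbf N}|=\aleph_0$ and $|\mathbf N|=\aleph_1$; arranging the standard elementary-chain construction so that a copy of the defect $\varphi^{\mathbf M^*}\setminus\varphi^{\mathbf M}$ is reproduced at each of the $\aleph_1$ successor stages makes $\varphi^{\mathbf N}$ uncountable, contradicting (iii).

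The remaining implication $(iv)\Rightarrow(i)$ is the crux. I would argue contrapositively: assuming $\varphi$ is not co-analyzable, produce $\mathbf M\preceq\mathbf M^*$ with $C$ frozen and $\varphi$ growing. Take a countable $\mathbf M\models T$; then $\varphi^{\mathbf M}$ is infinite (it is not co-analyzable in $0$ steps), so by compactness some non-algebraic complete $\varphi$-type $p(x)\in S_1(\mathbf M)$ is realized outside $M$. Form $T_p:=\operatorname{ElDiag}(\mathbf M)\cup p(a)$ in the countable language $\mathcal L_M\cup\{a\}$, and consider the type $q(y):=\{C(y)\}\cup\{y\ne m:\ m\in M\}$ describing a new constant. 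A model of $T_p$ omitting $q$ is precisely a $C$-freezing elementary extension of $\mathbf M$ containing the new $\varphi$-point $a$, and by the omitting types theorem such a model exists as soon as $q$ is non-isolated over $T_p$.

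The main obstacle is thus to arrange non-isolation of $q$, equivalently to show that if $q$ is isolated for \emph{every} non-algebraic $\varphi$-type $p$—that is, if every external $\varphi$-point definably over $M$ produces a new constant—then $\varphi^{\mathbf M}$ is already co-analyzable. Here is where the real content lies: by compactness finitely many $M$-definable functions $c_1,\dots,c_k\colon\varphi\to C$ should suffice uniformly, so that $\varphi$ is covered by the fibers of $(c_1,\dots,c_k)\colon\varphi\to C^k$, realizing one step of the co-analysis; one must then see that these fibers are co-analyzable in strictly fewer steps, so that the recursion terminates after finitely many stages. Isolating a rank that provably drops on passing to the fibers—guaranteeing well-foundedness and hence a \emph{finite} co-analysis—is the delicate point, and it is what makes the equivalence a genuine theorem rather than a bookkeeping exercise.
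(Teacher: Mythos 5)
Your handling of (i)$\Rightarrow$(ii), (ii)$\Rightarrow$(iii) and (iii)$\Rightarrow$(iv) is correct and is the same as the paper's: the first two follow from the uniformity of the notion of $r$-step co-analysis together with completeness of $T$ and a counting induction, and the third is Vaught's two-cardinal theorem applied to the pair, exactly as you describe. (Minor slip: the non-algebraic type $p$ extending $\varphi$ lives in $S_n(\mathbf M)$, not $S_1(\mathbf M)$.)

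The problem is (iv)$\Rightarrow$(i), the only implication with real content and the one the paper does not prove but attributes to the omitting-types argument of Herwig--Hrushovski--Macpherson. Your setup is right: work over a countable $\mathbf M$, and try to omit the type $q(y)$ of a new constant over $T_p=\operatorname{ElDiag}(\mathbf M)\cup p(a)$ for a suitable non-algebraic $p\vdash\varphi$. But the step you flag as ``the delicate point'' is the theorem, and the route you sketch does not close it. If $q$ is isolated over $T_p$ for every candidate $p$, compactness does cover $\varphi$ by finitely many relations $\theta_i(x,y)$ with $y$ ranging over $C$; but this only exhibits $\varphi$ as a union of sections over $C$, and nothing guarantees those sections are co-analyzable in \emph{fewer} steps, so the recursion you propose has no reason to terminate --- there is no a priori rank that drops. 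The standard way to finish (and, as far as the paper indicates, the way HHM do it) is not a descending rank but a compactness argument on the type space: consider the closed set of complete types over $\mathbf M$ extending $\varphi$ that contain \emph{no} co-analyzable formula. If it is empty, finitely many co-analyzable formulas cover $\varphi$, and a finite union of sets co-analyzable in at most $r$ steps is co-analyzable in $r$ steps, so $\varphi^{\mathbf M}$ is co-analyzable. If it is nonempty, one shows that any $p$ in it locally omits $q$, which produces the $C$-freezing proper pair and contradicts (iv). Without this device (or an equivalent one), your argument for (iv)$\Rightarrow$(i) is incomplete.
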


\noindent
The equivalence (i)~$\Leftrightarrow$~(ii) and the implication
(ii)~$\Rightarrow$~(iii) are clear from the above, and (iii)~$\Rightarrow$~(iv)
holds by Vaught's two-cardinal theorem~\cite[Theorem 12.1.1]{Hodges}. The contrapositive of 
(iv)~$\Rightarrow$~(i) is obtained in \cite{HHM} by an omitting types argument. 

\subsection*{Application to $\T$} Let $\mathcal{L}$ be the language of ordered valued differential fields from Section~\ref{T}, except that we consider it as having in addition a distinguished unary relation symbol $C$; an $H$-field is construed
as an $\mathcal{L}$-structure as before, 
with~$C$ in addition interpreted as its constant field.

Let $K$ be a Liouville closed $\upo$-free newtonian $H$-field
and $P\in K\{Y\}^{\ne}$. If $K\preceq K^*$ and
$K$ and $K^*$ have the same constants, then $P$ has the same zeros in $K$ and $K^*$, by \cite[Theorem 16.0.3]{ADH}. Thus the zero
set $\Zero(P)\subseteq K$ is co-analyzable by Proposition~\ref{hmm}
applied to the $\mathcal{L}_A$-theory $T:=\text{Th}(K_A)$ where $A$ is the 
finite set of nonzero coefficients of $P$. In fact:

\begin{prop} Let $S\subseteq K^n$ be definable, $S\ne \emptyset$. Then
$$S \text{ is co-analyzable }\ \Longleftrightarrow\ 
\dim S\ =\ 0.$$
\end{prop}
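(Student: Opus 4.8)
The plan is to prove the two implications separately: the forward one (co-analyzable $\Rightarrow \dim S=0$) by induction on the number of co-analysis steps together with the fibre-dimension calculus of Proposition~\ref{Kddim}, and the backward one ($\dim S=0 \Rightarrow$ co-analyzable) by verifying condition (iv) of Proposition~\ref{hmm} and then invoking \cite[Theorem~16.0.3]{ADH}.

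For the forward direction I would first pass to an $\omega$-saturated elementary extension of $K$: this changes neither $\dim S$ (by Lemma~\ref{dim4} and the corollary following it) nor the number of steps in which $S$ is co-analyzable (by the preservation under elementary extension recorded before the statement), so I may work with the clean recursive clauses $(\mathrm C_0)$, $(\mathrm C_{r+1})$. I then argue by induction on $r$ that every definable set co-analyzable in $r$ steps has dimension $\le 0$. The base case $r=0$ is immediate from Lemma~\ref{dim2}(i). For the inductive step, let $R\subseteq C\times K^n$ witness $(\mathrm C_{r+1})$ for $S$; viewing $R\subseteq K^{1+n}$ and projecting to the first coordinate via $q\colon R\to K$, I have $q(R)\subseteq C$ and hence $\dim q(R)\le\dim C=0$ (using $\dim C=0$ from Lemma~\ref{dim1}), while each fibre $q^{-1}(c)=\{c\}\times R(c)$ has dimension $\dim R(c)\le 0$ by the inductive hypothesis (and Lemma~\ref{dim3}). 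The fibre-dimension formula of Proposition~\ref{Kddim} then forces $\dim R\le 0$, and since $S$ is the projection of $R$ to $K^n$, the same proposition gives $\dim S\le\dim R\le 0$.

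For the backward direction, let $\varphi$ define $S$ over a finite $A\subseteq K$ and set $T:=\operatorname{Th}(K_A)$, a complete theory in a countable language with $T\vdash\exists x\,C(x)$. I would verify clause (iv) of Proposition~\ref{hmm}: given models $\mathbf M\preceq\mathbf M^*$ of $T$ with the same constant field, with underlying $H$-fields $K\preceq K^*$ and $S=\varphi^{\mathbf M}$, $S^*=\varphi^{\mathbf M^*}$, one must show $S^*=S$. The key observations are that $\dim S=0$ persists (the dimension is the same in every model of $T$) and that for any $s=(s_1,\dots,s_n)\in S^*$ each coordinate $s_i$ is $\d$-algebraic over $K$: indeed $\dim\pi_i(S)=0$ by Lemma~\ref{dim2}(v), so by Lemma~\ref{dim2}(ii) some nonzero $P_i\in K\{Y\}$ vanishes on $\pi_i(S)$, hence on $\pi_i(S^*)$, giving $P_i(s_i)=0$. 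Consequently $L:=K\<s\>$ is $\d$-algebraic over $K$, and being sandwiched as $C_K\subseteq C_L\subseteq C_{K^*}=C_K$ it has the same constants as $K$. A short check shows $L$ is an $H$-field [(H1) because $a>C_L$ forces $\der a>0$ already in $K^*$, hence in $L$; (H2) because any $a\in\mathcal{O}_L$ can be written $a=c+\epsilon$ in $K^*$ with $c\in C_L$ and then $\epsilon=a-c\in L\cap\smallo_{K^*}=\smallo_L$]. Thus $L$ is a $\d$-algebraic $H$-field extension of the Liouville closed $\upo$-free newtonian $H$-field $K$ with the same constants, so $L=K$ by \cite[Theorem~16.0.3]{ADH}; hence $s\in K^n$ and $S^*=S$.

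The main obstacle is the backward direction, and specifically the reduction just sketched: one must recognize that $\dim S=0$ forces every coordinate of every point of $S^*$ to be $\d$-algebraic over $K$, and that an intermediate differential field $K\<s\>$ lying between two $H$-fields that share the same constants is automatically an $H$-field. Only then does the rigidity result \cite[Theorem~16.0.3]{ADH} apply and collapse $K\<s\>$ onto $K$. The forward direction, by contrast, is a routine induction once the fibre-dimension formula of Proposition~\ref{Kddim} is available.
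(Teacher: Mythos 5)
Your proof is correct and follows essentially the same route as the paper: the forward direction is the same induction on the number of co-analysis steps in an $\omega$-saturated model using $\dim C=0$ and the fibre-dimension formula (Theorem~\ref{ddim}), and the backward direction rests, as in the paper, on reducing to coordinate-wise $\d$-algebraicity via Lemma~\ref{dim2}(ii), on \cite[Theorem~16.0.3]{ADH}, and on the implication (iv)$\Rightarrow$(i) of Proposition~\ref{hmm}. The only cosmetic difference is that you verify clause (iv) directly for $S$ (which obliges you to check that $K\<s\>$ is an $H$-field with the same constants), whereas the paper applies Proposition~\ref{hmm} to each coordinate zero set $\Zero(P_i)$ --- where that check was already carried out --- and then invokes the closure of co-analyzability under cartesian products and definable subsets.
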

\begin{proof} Suppose $\dim S=0$. Then for $i=1,\dots, n$
and the $i$th coordinate projection $\pi_i\colon K^n \to K$
we have $\dim \pi_i(S)=0$, and thus $\pi_i(S)\subseteq \Zero(P_i)$ with 
$P_i\in K\{Y\}^{\ne}$. Since each $\Zero(P_i)$ is co-analyzable
and 
$S\subseteq \Zero(P_1)\times\cdots \times \Zero(P_n)$, we  conclude that~$S$ is co-analyzable. Conversely, assume that $S$ is co-analyzable, say in~$r$ steps. To get $\dim S = 0$ we can arrange that $K$ is
$\omega$-saturated. Using $\dim C=0$ and induction on~$r$
it follows easily from the behavior of dimension in definable families (Theorem~\ref{ddim}) that $\dim S=0$.
\end{proof} 

\noindent
Let $\dim_C S$ be the least $r\in \N$ such that
$S$ is co-analyzable in $r$ steps, for nonempty definable $S\subseteq K^n$
with $\dim S=0$ (and $\dim_C \emptyset := -\infty$). 
It is easy to show 
that $\dim_C S$ coincides with the usual semialgebraic dimension of $S$ 
(with respect to the real closed field $C$) when $S\subseteq C^n$ is
semialgebraic. In general, $\dim_C S$ behaves
much like a dimension function, and it would be good to confirm this
by showing for example that for definable $S_i\subseteq K^{n_i}$ with $\dim S_i=0$ for $i=1,2$ we have
$$\dim_C S_1\times S_2\ =\ \dim_C S_1 + \dim_C S_2.$$
(We do know that the quantity on the left is at most that on the right.)
Another question is whether $\dim_C \Zero(P)\le \text{order}(P)$ for $P\in K\{Y\}^{\ne}$.

\medskip\noindent
Towards the uniform finiteness property mentioned at the end of the introduction, we introduce a condition that is equivalent to
co-analyzability. 

Let $K$ be $\omega$-saturated and $S\subseteq K^n$ be definable.
By recursion on $r\in \N$ we define what makes $S$ {\bf fiberable
by $C$ in $r$ steps}: for $r=0$ it means that $S$ is finite; $S$ is fiberable by $C$ in $(r+1)$ steps iff there is a definable map $f\colon S \to C$ such that~$f^{-1}(c)$ is fiberable by $C$ in $r$ steps for every $c\in C$.

\begin{lemma}\label{ca=fib} $S$ is co-analyzable in $r$ steps iff $S$ is 
fiberable by $C$ in $r$ steps.
\end{lemma}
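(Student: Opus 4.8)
The plan is to prove the equivalence by induction on $r$, using the definitions of both notions and the robustness of co-analyzability established earlier. The statement is an ``iff'' between two recursively-defined properties, so the natural strategy is to match up the recursive clauses step by step. For $r=0$ both notions mean exactly ``$S$ is finite,'' so the base case is immediate. For the inductive step I would separately verify each direction, assuming as inductive hypothesis that the equivalence holds for $r$.

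\medskip\noindent
For the direction from fiberability to co-analyzability, suppose $S$ is fiberable by $C$ in $r+1$ steps, witnessed by a definable map $f\colon S\to C$ with each fiber $f^{-1}(c)$ fiberable by $C$ in $r$ steps. First I would form the definable set $R:=\big\{(c,s)\in C\times K^n:\ s\in S,\ f(s)=c\big\}$, the graph of $f$ reversed. The projection $C\times K^n\to K^n$ clearly maps $R$ onto $S$, and for each $c\in C$ the section $R(c)$ is precisely $f^{-1}(c)$, which is fiberable by $C$ in $r$ steps, hence co-analyzable in $r$ steps by the inductive hypothesis. This is exactly clause (C$_{r+1}$), so $S$ is co-analyzable in $r+1$ steps.

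\medskip\noindent
For the converse, suppose $S$ is co-analyzable in $r+1$ steps, witnessed by a definable $R\subseteq C\times K^n$ whose projection covers $S$ and whose sections $R(c)$ are co-analyzable in $r$ steps, hence (by the inductive hypothesis) fiberable by $C$ in $r$ steps. The task is to produce a single definable $f\colon S\to C$ whose fibers $f^{-1}(c)$ are fiberable in $r$ steps. The idea is to use $R$ to assign to each $s\in S$ some constant $c$ with $(c,s)\in R$: since the projection is onto, each $s$ lies in some $R(c)$, and I would select one such $c$ definably. The cleanest way is to invoke $\omega$-saturation (which we are assuming) to get a uniform bound and a definable choice; alternatively one can use a definable selection along the $C$-coordinate. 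Setting $f(s)$ to be this chosen $c$ gives a definable map $S\to C$ with $f^{-1}(c)\subseteq R(c)$ for each $c$; since $R(c)$ is fiberable in $r$ steps and fiberability passes to definable subsets, each $f^{-1}(c)$ is fiberable in $r$ steps, so $S$ is fiberable in $r+1$ steps.

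\medskip\noindent
The main obstacle is the converse direction, specifically producing the definable selection map $f$ from the covering relation $R$. In co-analyzability the sets $R(c)$ may overlap, so a given $s$ can be covered by many constants $c$, whereas fiberability demands a single well-defined value $f(s)$. Resolving this requires a uniform definable choice of one covering constant for each point, which is where $\omega$-saturation (and the induced real-closed-field structure on $C$, giving definable Skolem-type selection along $C^m$) does the real work. One must also check that the inductive hypothesis applies to $f^{-1}(c)$ rather than directly to $R(c)$, using that fiberability in $r$ steps is inherited by definable subsets — a fact that follows from the corresponding robustness already noted for co-analyzability together with the inductive hypothesis.
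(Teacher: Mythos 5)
Your proposal is correct and takes essentially the same route as the paper: induction on $r$, the easy direction by taking $R$ to be the reversed graph of $f$, and the converse by definably selecting one constant from each nonempty section $R^s\subseteq C$ so that the fibers $f^{-1}(c)\subseteq R(c)$ inherit co-analyzability in $r$ steps and the inductive hypothesis applies. The only imprecision is attributing the definable choice to $\omega$-saturation; what actually supplies it (and what the paper invokes) is that each $R^s$ is a nonempty definable subset of $C$, hence semialgebraic in the real closed field $C$ --- a finite union of points and intervals --- from which a point can be picked uniformly definably, which is exactly the ``definable selection along the $C$-coordinate'' you mention as the alternative.
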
 
\begin{proof} By induction on $r$. The case $r=0$ is trivial. Assume $S$ is co-analyzable in $(r+1)$ steps, so we have a definable $R\subseteq C\times K^n$ that is mapped onto $S$ under the natural projection $C\times K^n\to K^n$ and such that
$R(c)$ is co-analyzable in $r$ steps for all $r$.
For $s\in S$ the definable nonempty set $R^s\subseteq C$ is a finite union of intervals and points, and so we can pick a point
$f(s)\in R^s$ such that the resulting function $f\colon S \to C$
is definable. Then $f^{-1}(c)\subseteq R(c)$ is co-analyzable
in $r$ steps for all $c\in C$, and so fiberable by $C$ in
$r$ steps by the inductive assumption. Thus $f$ witnesses
that $S$ is fiberable by $C$ in $(r+1)$ steps.
The other direction is clear. 
\end{proof} 

\noindent
As an example, consider $S=\Zero\!\big(YY''-(Y')^2\big)$. Then we have a definable 
(surjective) function $f\colon S \to C$ given by $f(y)=y^\dagger$ for nonzero 
$y\in S$, and $f(0)=0$. For  $c\in C^\times$ we take any $y\in S$ with $f(y)=c$, and then $f^{-1}(c)=C^\times y$; also $f^{-1}(0)=C$.
Thus~$f$ witnesses that $S$ is fiberable by $C$ in two steps. Moreover,
$S$ is not fiberable by~$C$ in one step: if it were, Lemma~\ref{pc} would 
make $S$ parametrizable by constants, which we know is not the case.

An advantage of fiberability by $C$ over co-analyzability is that
for $f\colon S \to C$ and $R\subseteq C\times S$ witnessing these notions 
the fibers $f^{-1}(c)$ in
$S=\bigcup_c f^{-1}(c)$ are pairwise disjoint, which is not necessarily the case for the sections $R(c)$ in $S=\bigcup_c R(c)$. Below we use the equivalence
$$S \text{ is finite} \Longleftrightarrow\ f(S) \text{ is finite and every fiber $f^{-1}(c)$ is finite}.$$ 
to obtain the uniform finiteness property mentioned at the
end of the introduction. We state this property here again in a slightly 
different form, 
with $K$ any Liouville closed $\upo$-free newtonian $H$-field:

\begin{prop} Let $D\subseteq K^m$ and $S\subseteq D\times K^n$ be definable. Then there exists an
$e\in \N$ such that $|S(a)|\le e$ whenever $a\in D$ and $S(a)$ is finite.
\end{prop}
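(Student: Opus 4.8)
The plan is to deduce this uniform finiteness from the fiberability reformulation of co-analyzability (Lemma~\ref{ca=fib}), by upgrading the \emph{pointwise} co-analyses of the fibers $S(a):=\{z\in K^n:(a,z)\in S\}$ to finitely many \emph{uniform} ones, and then discharging the resulting cardinality estimate over the constant field $C$, where it becomes uniform finiteness in the o-minimal structure induced on $C$.

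First I would make two harmless reductions. Only fibers with $\dim S(a)=0$ can be finite, and by Theorem~\ref{ddim} the set $\{a\in D:\dim S(a)=0\}$ is definable; intersecting $D$ with it changes nothing in the statement, so I may assume $\dim S(a)=0$ for all $a\in D$. Next, since ``$S(a)$ has exactly $k$ elements'' is for each fixed $k$ a first-order condition on $a$, a bound $e$ valid in some elementary extension of $K$ descends to $K$; hence I may freely pass to a highly saturated elementary extension and assume $K$ is $\omega$-saturated.

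The central step is to produce, on a finite definable partition of $D$, a single uniform co-analysis of the fibers. For each $a\in D$ the fiber $S(a)$ is nonempty with $\dim S(a)=0$, hence co-analyzable, and I would argue that such an $r$-step co-analysis can be taken definable over $a$ together with the fixed finite parameter set $A$ defining $S$ (its constituents arise, via $\d$-boundedness, from the fixed family localizing $S(a)$ inside some $\Zero(P(a,Z))$). For each choice of $r,e\in\N$ and a tuple of $\mathcal L_A$-formulas, the parameter-uniformity recorded right after the definition of an $r$-step co-analysis makes the set $U$ of those $a\in D$ for which the corresponding uniform family is an $r$-step co-analysis of $S(a)$ with bound $e$ definable; by the previous sentence these sets $U$ cover $D$. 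There are only countably many such choices, so an $\omega$-saturation argument excludes the possibility that no finitely many of the $U$ cover $D$: a point avoiding all of them would have a dimension-$0$ fiber that is not co-analyzable, contradicting the equivalence of co-analyzability with $\dim=0$. Thus $D$ splits into finitely many definable pieces, on each of which the fibers carry one uniform co-analysis, and I would bound each piece separately. I expect this extraction of a uniform co-analysis---equivalently, a uniform bound on the number of fibering steps across the whole family---to be the main obstacle, precisely because the depth $\dim_C$ is not known to be controlled by the order of the defining polynomial, so the bound must be forced by saturation rather than read off.

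Finally, on a piece carrying a uniform $r$-step co-analysis I would iterate the construction in the proof of Lemma~\ref{ca=fib}, uniformizing the choice of a definable point in each section, to obtain a uniformly definable map $\Phi$ with $\Phi(a,\,\cdot\,)\colon S(a)\to C^{r}$ whose fibers lie in the bottom ($0$-step) level and hence have size at most the uniform bound $e_0$. Consequently $|S(a)|\le e_0\cdot|\Phi(a,S(a))|$ whenever $S(a)$ is finite, and $\Phi(a,S(a))$ is then a finite $K$-definable subset of $C^{r}$. Since the structure induced on $C$ is the semialgebraic structure of the real closed field $C$ and $C$ is stably embedded, $\{\Phi(a,S(a))\}_{a}$ is a definable family in the o-minimal $C$, so uniform finiteness there yields an $N$ with $|\Phi(a,S(a))|\le N$ for every finite instance. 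Then $e:=e_0N$ works on this piece, and the maximum over the finitely many pieces gives the desired $e$.
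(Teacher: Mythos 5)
Your proposal is correct and follows essentially the same route as the paper: reduce to fibers of dimension $0$, pass to an $\omega$-saturated model, use saturation/compactness to uniformize the co-analyses (equivalently, fiberations by $C$) of the fibers, and discharge the resulting bound via uniform finiteness over the o-minimal constant field using the induced-structure result. The only difference is packaging: the paper runs an induction on the number of fibering steps with one definable map to $C$ at each stage, whereas you collapse the steps into a single map to $C^{r}$ over a finite definable partition of $D$, which merely requires the routine extension of the paper's base case from definable families of subsets of $C$ to subsets of $C^{r}$.
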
  
\begin{proof} We first consider the special case that $n=1$ and 
$S(a)\subseteq C$ for all~${a\in D}$. By \cite[16.0.2(ii)]{ADH} a subset of $C$ is definable in $K$ iff it is semialgebraic in the sense of $C$.
Thus $S(a)$ is finite iff it doesn't contain
any interval $(b,c)$ in $C$ with~${b < c}$ in~$C$; the uniform bound
then follows by a routine 
compactness argument. Next we reduce the general case to this special case.

First, using Proposition~\ref{Kddim} we   shrink $D$ to arrange that
$\dim S(a)=0$ for all~${a\in D}$. Next, we arrange that $K$ is $\omega$-saturated,
so $S(a)$ is fiberable by~$C$ for every~${a\in D}$. Saturation allows us to reduce further to the case that for
a fixed~${r\in \N}$ every section $S(a)$ is fiberable by $C$ in $(r+1)$ steps.
We now proceed by induction on $r$. 
Model-theoretic compactness yields a definable function $f\colon S \to C$ such that
for every $a\in D$ the function $f_a\colon S(a) \to C$ given by $f_a(s)=f(a,s)$ 
witnesses that~$S(a)$ is fiberable by~$C$ in $(r+1)$ steps, that is,
$f_a^{-1}(c)$ is fiberable by~$C$ in $r$ steps for all $c\in C$.

Inductively we have $e\in \N$ such that
$|f_a^{-1}(c)|\le e$ whenever $a\in D, c\in C$, and~$f_a^{-1}(c)$ is finite.
The special case we did in the beginning of the proof gives~$d\in \N$
such that $|f_a(S(a))|\le d$ whenever $a\in D$ and $f_a(S(a))$ is finite.
 For $a\in D$ we have $S(a)=\bigcup_cf_a^{-1}(c)$, so if $S(a)$ is finite, then
$|S(a)|\le de$.  
\end{proof}

\noindent
To fully justify the use of saturation/model-theoretic compactness in the proof
above requires an explicit notion of ``$r$-step fibration by $C$'' (analogous to that of ``$r$-step co-analysis'') that makes sense 
for any $K$, not necessarily $\omega$-saturated. We leave this to the reader, and just note a nice
consequence: if $S\subseteq K^n$ is definable, infinite, and $\dim S =0$, 
then $S$ has the same cardinality as $C$. (This reduces to the fact that any infinite semialgebraic subset of $C$ has the same cardinality as $C$.)
In particular, there is no countably infinite definable set $S\subseteq \T$.
  
\medskip
\noindent
As an application of the material above we show
that the differential field $K$ does not eliminate 
imaginaries. More precisely: 

\begin{cor} No definable map $f\colon K^\times\to K^n$ is such that for all $a,b\in K^\times$,
$$a\asymp b\ \Longleftrightarrow\ f(a)=f(b).$$
\end{cor}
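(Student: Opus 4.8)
The plan is to derive a contradiction from the fiber formula of Section~\ref{T} together with the co-analyzability results of this section. So suppose toward a contradiction that $f\colon K^\times\to K^n$ is definable with $a\asymp b\Leftrightarrow f(a)=f(b)$ for all $a,b\in K^\times$, and set $Y:=f(K^\times)\subseteq K^n$, both definable over a finite set $A\subseteq K$ of parameters. For $y\in Y$ the fiber $f^{-1}(y)$ is exactly an $\asymp$-class $\{b\in K^\times: b\asymp a\}$ (where $f(a)=y$). Such a class has nonempty interior in $K$: if $a>0$ and $0<\delta\prec a$, then every $b\in(a-\delta,a+\delta)$ satisfies $b/a\in 1+\smallo$, so $b\asymp a$. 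Hence every nonempty fiber of $f$ has dimension $1$ by Corollary~\ref{defdimn}. Moreover $Y$ is infinite, since the value group of $K$ is nontrivial (else $\mathcal O=K$, $\smallo=0$, and $K=C+\smallo=C$, contradicting $C\ne K$) and, being an ordered abelian group, is torsion-free and hence infinite.

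Next I would read off $\dim Y=0$ from the fiber formula. Applying Proposition~\ref{Kddim} to $f$ with $m=1$ (recall $K$ is $\d$-bounded), the sets $B(i)=\{y\in K^n:\dim f^{-1}(y)=i\}$ satisfy $B(0)=\emptyset$ and $B(1)=Y$ by the previous paragraph, so from $f^{-1}(B(1))=K^\times$ we get $1=\dim K^\times=\dim f^{-1}(B(1))=1+\dim B(1)=1+\dim Y$, whence $\dim Y=0$. By the proposition above that a nonempty definable set is co-analyzable iff it has dimension $0$, the set $Y$ is co-analyzable in $K$, a model of $T:=\text{Th}(K_A)$.

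The contradiction should now come from comparing $Y$ with the value group across models. By Proposition~\ref{hmm}, since $Y$ is co-analyzable in the model $K$ of $T$, condition~(iv) holds: for every elementary extension $K\preceq K^*$ with $C^{K^*}=C^{K}$ we have $Y^*=Y$, i.e.\ every element of $(K^*)^\times$ is $\asymp$-equivalent to an element of $K^\times$, so $K^*$ has the same value group as $K$. (Alternatively, since $\dim Y^{\mathbf M}=0$ by the same first-order computation in every $\mathbf M\models T$, the cardinality consequence recorded above gives $|Y^{\mathbf M}|=|C^{\mathbf M}|$, so the value group and constant field of $\mathbf M$ always have equal cardinality.) It therefore suffices to exhibit a single constant-preserving elementary extension $K\preceq K^*$ whose value group acquires a new $\asymp$-class (equivalently, a model of $T$ in which the value group and the constant field have different cardinalities).

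The main obstacle is precisely this last step: producing a model of $T^{\text{nl}}$ in which the value group is decoupled from the constant field. I expect to obtain it from the structure theory of \cite{ADH}: adjoin to $K$ an element lying in a new $\asymp$-class by a differentially-transcendental $H$-field extension creating no new constants, and then pass to a Liouville closed $\upo$-free newtonian $H$-field extension still having constant field $C$; since $T^{\text{nl}}$ is model-complete, the resulting inclusion $K\subseteq K^*$ is automatically elementary, and $K^*$ has a new $\asymp$-class. That the closure can be carried out without enlarging $C$ is the delicate point, as the closure operations are themselves differential-algebraic: by \cite[Theorem~16.0.3]{ADH} a constant-preserving differentially-algebraic extension of a model of $T^{\text{nl}}$ is trivial, so the new class must be created transcendentally. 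Granting such a $K^*$, the co-analyzability of $Y$ is contradicted, and no such $f$ can exist.
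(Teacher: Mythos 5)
Your argument is sound up to and including the deduction that $Y=f(K^\times)$ has dimension $0$, hence is co-analyzable, hence by Proposition~\ref{hmm}(iv) that no constant-preserving elementary extension of $K$ can acquire a new $\asymp$-class. The one genuine gap is exactly the step you flag as the obstacle: the existence of an elementary extension $K\preceq K^*$ with $C^{K^*}=C^{K}$ whose value group is strictly larger (indeed, of cardinality greater than $|C|$). Your sketch --- adjoin a differentially transcendental element lying in a new $\asymp$-class and then close off under Liouville closure, $\upo$-freeness and newtonianity without creating constants --- is the right idea, but the assertion that the closure operations do not enlarge $C$ is precisely the nontrivial point and is not something you can wave at; it requires the extension theory of \cite{ADH}. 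The paper imports this as a black box: by \cite[Lemmas~16.6.10 and~14.5.10]{ADH} there is an elementary extension of $K$ with the same constant field whose value group has cardinality greater than that of $C$. With that citation in hand, your proof is complete.

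For the record, the paper runs the same contradiction in a slightly different order, essentially your parenthetical ``alternative'': it first arranges that the value group of $K$ has cardinality greater than $|C|$; since $f(K^\times)$ is then a definable set of cardinality greater than $|C|$, the cardinality consequence recorded after Lemma~\ref{ca=fib} (an infinite definable set of dimension $0$ has the same cardinality as $C$) forces $\dim f(K^\times)\ge 1$; then your observation that each fiber is a nonempty open subset of $K^\times$, hence of dimension $1$, combined with the fiber formula of Proposition~\ref{Kddim}, gives $\dim K^\times\ge 2$, a contradiction. This bypasses Proposition~\ref{hmm} and the comparison of $Y$ across models, but the two routes are logically the same and rest on the same external input from \cite{ADH}.
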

\begin{proof} By \cite[Lemmas~16.6.10,~14.5.10]{ADH} there exists an
elementary extension of $K$ with the same constant field $C$ as $K$ and whose
value group has greater cardinality than $C$. Suppose  $f\colon K^\times\to K^n$ is definable such that for all $a,b\in K^\times$ we have: 
$a\asymp b \Leftrightarrow f(a)=f(b)$. We can arrange that the value group of
$K$ has greater cardinality than $C$, and so $f(K^\times)\subseteq K^n$ has dimension $>0$. 
Every fiber $f^{-1}(p)$ with $p\in f(K^\times)$ is a nonempty open subset of
$K^\times$, so has dimension $1$, and thus $\dim K^\times >1$ by $\d$-boundedness
of $K$, a contradiction.
\end{proof}


\begin{thebibliography}{11}
\bibitem{ADH} M. Aschenbrenner, L. van den Dries, J. van der Hoeven,
\textit{Asymptotic Differential Algebra and Model Theory of Transseries,} Ann. of Math. Stud., to appear, {\tt arXiv:1509.02588}.

\bibitem{ADH1} \bysame, \textit{The group of strong automorphisms of the differential field of transseries,} in preparation.

\bibitem{BMR} T. Brihaye, C. Michaux, C. Rivi\`ere, \textit{Cell decomposition and dimension function in the theory of closed ordered differential fields,} Ann. Pure Appl. Logic {\bf 159} (2009), no. 1-2, 111--128. 

\bibitem{vdD89}
L. van den Dries, \textit{Dimension of definable sets, algebraic boundedness and Henselian fields,}
Ann. Pure Appl. Logic {\bf 45} (1989), no. 2, 189--209.



\bibitem{DMM} L. van den Dries, A. Macintyre,  D. Marker, \textit{Logarithmic-exponential series,} Ann. Pure Appl. Logic {\bf 111} (2001), 61--113. 

\bibitem{Freitag}
J. Freitag, \textit{Disintegrated order one differential equations and algebraic general solutions,} pre\-print,
2016, {\tt	arXiv:1607.04387}.

\bibitem{GP}
N. Guzy, F. Point, \textit{Topological differential fields and dimension functions,} 
J. Symbolic Logic {\bf 77} (2012), no. 4, 1147--1164. 

\bibitem{HHM}
B. Herwig, E. Hrushovski, D. Macpherson, \textit{Interpretable groups, stably embedded sets, and 
Vaughtian pairs,} J. London Math. Soc. (2003) {\bf 68}, no. 1, 1--11.

\bibitem{Hodges}
W.~Hodges, \textit{Model {T}heory}, Encyclopedia of Mathematics and its Applications, vol.~42, Cambridge University Press, Cambridge, 1993.



\bibitem{JJ} J. Johnson, \textit{Systems of $n$ partial differential equations in $n$ unknown functions: the conjecture of M. Janet,} Trans. Amer. Math. Soc. {\bf 242} (1977), 329--334. 




\bibitem{Matsuda}
M. Matsuda, \textit{First Order Algebraic Differential Equations: A Differential Algebraic Approach,}
Lecture Notes in Math., vol. 804, Springer-Verlag, Berlin-Heidelberg, 1980.

\bibitem{McGrail}
T. McGrail, \textit{The search for trivial types,} Illinois J. Math. {\bf 44} (2000), no. 2, 263--271.

\bibitem{Nishioka} 
K. Nishioka, \textit{Algebraic differential equations of Clairaut type
from the differential-algebraic standpoint,} J. Math. Soc. Japan {\bf 31} (1979), 191--197.

\bibitem{Pillay}
A. Pillay, \textit{Around differential Galois theory}, in: 
M. A. H. MacCallum, A. V. Mikhailov (eds.),
\textit{Algebraic Theory of Differential Equations,} pp.~232--240, London Mathematical Society Lecture Note Series, vol. 357, Cambridge University Press, Cambridge, 2009.

\bibitem{Rosenlicht74}
M. Rosenlicht, \textit{The nonminimality of the differential closure,}
Pacific J. Math. {\bf 52} (1974), 529--537.

\end{thebibliography}
\end{document}